\numberwithin{equation}{section}
\theoremstyle{plain}
\newtheorem{thm}{Theorem}[section]
\newtheorem{cor}[thm]{Corollary}
\newtheorem{lem}[thm]{Lemma}
\newtheorem{prop}[thm]{Proposition}
\newtheorem{fact}[thm]{Fact}
\theoremstyle{defn}
\newtheorem{defn}[thm]{Definition}
\newtheorem{exm}[thm]{Example}
\theoremstyle{remark}
\newtheorem{rem}[thm]{Remark}
\renewcommand{\mod}{\operatorname{mod}\nolimits}
\newcommand{\add}{\operatorname{add}\nolimits}
\newcommand{\id}{\operatorname{id}\nolimits}
\newcommand{\op}{\operatorname{op}\nolimits}
\newcommand{\ra}{\rightarrowtail}
\newcommand{\ta}{\twoheadrightarrow}
\newcommand{\Cone}{\operatorname{Cone}\nolimits}
\newcommand{\CoCone}{\operatorname{CoCone}\nolimits}
\newcommand{\M}{\mathcal M}
\newcommand{\B}{\mathcal B}
\newcommand{\uB}{\underline{\B}}
\newcommand{\oB}{\overline{\B}}
\newcommand{\U}{\mathcal U}
\newcommand{\V}{\mathcal V}
\newcommand{\W}{\mathcal W}
\newcommand{\h}{\mathcal H}
\newcommand{\s}{\mathcal S}
\newcommand{\T}{\mathcal T}
\newcommand{\K}{\mathcal K}
\newcommand{\Y}{\mathcal Y}
\newcommand{\C}{\mathcal C}
\newcommand{\EE}{\mathbb E}
\newcommand{\STUV}{((\s,\T),(\U,\V))}
\newcommand{\svecv}[2]{\left(\begin{smallmatrix}
      #1 \\
      #2
    \end{smallmatrix}\right)}
\newcommand{\svech}[2]{\left(\begin{smallmatrix}
      #1 & #2
\end{smallmatrix}\right)}
\renewcommand{\emph}{\textit}
\renewcommand{\phi}{\varphi}
\begin{document}

\title{Hearts of twin Cotorsion pairs on extriangulated categories}
\author{Yu Liu}
\email{liu.yu@math.uu.se} % foptionalf
\address{Matematiska institutionen, Uppsala Universitet, Box 480, 751 06 Uppsala, Sweden}

\author{Hiroyuki Nakaoka}
\email{nakaoka@sci.kagoshima-u.ac.jp} % foptionalf
\address{Research and Education Assembly, Science and Engineering Area, Research Field in Science, Kagoshima University, 1-21-35 Korimoto, Kagoshima, 890-0065 Japan}
\thanks{The first author is supported by Knut and Alice Wallenberg foundation program for mathematics.}
\thanks{The second author is supported by JSPS KAKENHI Grant Numbers 25800022.}
\thanks{The authors wish to thank Professor Martin Herschend, Doctor Panyue Zhou and Professor Bin Zhu for their helpful advices.}

\begin{abstract}
In this article, we study the {\it heart} of a cotorsion pairs on an exact category and a triangulated category in a unified meathod, by means of the notion of an {\it extriangulated category}. We prove that the heart is abelian, and construct a cohomological functor to the heart. If the extriangulated category has enough projectives, this functor gives an equivalence between the heart and the category of coherent functors over the {\it coheart} modulo projectives. We also show how an {\it$n$-cluster tilting subcategory} of an extriangulated category gives rise to a family of cotorsion pairs with equivalent hearts.
\end{abstract}

\maketitle

\tableofcontents

\section{Introduction and Preliminaries}

The notion of a {\it cotorsion pair} goes back to \cite{S}. It has been defined originally in module categories, and then in abelian and exact categories. Cotorsion pairs give a perspective of resolutions in the given category, and are related with other homological notions which are interesting in the category theory, or in the representation theory. For example, a pair of cotorsion pairs satisfying some conditions, which we call a {\it twin cotorsion pair}, has close relationship with model structure called {\it abelian model structure} (\cite{Ho1},\cite{Ho2}) and {\it exact model structure} \cite{G}.

If one considers a cotorsion pair in a triangulated category, it becomes essentially the same as the notion of a torsion theory in the sense of \cite{IY}, which has been introduced in the recent development of the theory of mutation, and of higher Auslander-Reiten theory. It gives a simultaneous generalization of $t$-structure \cite{BBD}, cluster tilting subcategory \cite{KZ}, co-$t$-structure, and is also related to notions such as $n$-cluster tilting subcategory.
Quite often, an argument on cotorsion pairs which works in an exact category has its counterpart in a triangulated category through some appropriate modification, and vice versa. As for the relation with model structures mentioned above, its analog in a triangulated category has been given by \cite{Y}.

Recently, the notion of an {\it extriangulated category} was introduced in \cite{NP}, which is a simultaneous generalization of exact category and triangulated category. This enables us to unify the arguments on cotorsion pairs, as demonstrated in \cite{NP} for the relation with model structures.
Amongst several notions related to cotorsion pairs, this article is especially devoted to a unified treatment of the {\it hearts} of cotorsion pairs, by means of an extriangulated category.

Historically, the heart of a t-structure has been introduced in \cite{BBD} to define perverse sheaves. More recently, the quotient category of a triangulated category by a cluster tilting subcategory has been used in the theory of cluster category \cite{KZ,DL}. As a simultaneous generalization of them, the notions of hearts of cotorsion pairs were introduced in \cite{N1} for a triangulated category, and in \cite{L1} for an exact category. In both cases, the hearts are shown to become abelian categories, and they have similar properties.

In the latter part of this section, we briefly recall the definition and some basic properties of an extriangulated category, which will be used throughout this article.
Although our main concern is about cotorsion pairs, we also deal with {{\it twin} cotorsion pairs in section \ref{Section_TwinHeart}. The definition of the heart and its basic properties are given there. Especially, we will show that the heart of a twin cotorsion pair is always semi-abelian (Theorem \ref{C4.3}). Since we regard a cotorsion pair as a special case of a twin cotorsion pair (Definition \ref{Cctp}), which we call a {\it single} cotorsion pair when we emphasize, the results in section \ref{Section_TwinHeart} will be used in the proceeding sections.

From section \ref{Section_Heart}, we mainly deal with single cotorsion pairs. We specialize the results in section \ref{Section_TwinHeart}, to show that the heart of a cotorsion pair becomes abelian (Theorem \ref{C18}), and that the associated functor to the heart becomes {\it cohomological} (Theorem \ref{8.11}). These unify the results in \cite{N1}, \cite{L1} and \cite{AN}, \cite{L2} respectively. We remark that we do not need the assumption of enough projectivity nor injectivity until this section, which gives a slight improvement of \cite{L1} even in the case of an exact category. In the latter part, we also give a criterion for the hearts of cotorsion pairs to be equivalent compatibly with the associated cohomological functors (Proposition \ref{Prop_HeartEq}). It turns out that the {\it kernel} $\K$ of a cotorsion pair $(\U,\V)$ plays an important role.

From section \ref{Section_ProjectiveHeart}, we assume that the extriangulated category has enough projectives. Under this assumption, we show that a pair associated to $\K$ forms a cotorsion pair if and only if the heart of $(\U,\V)$ has enough projectives of certain type (Theorem \ref{HCoH}). We also give an equivalence between the heart and the category of coherent functors over the {\it coheart} modulo projectives (Proposition \ref{Propmod}).  The notion of a coheart is a generalization of that for a co-$t$-structure.

If we also assume the existence of enough injectives, we obtain a natural notion of higher extensions. This allows us to define {\it $n$-cluster tilting subcategories} of an extriangulated category. It gives a simultaneous generalization of the case of a triangulated category, and of an exact category. In section \ref{Section_nCluster}, we show how an $n$-cluster tilting subcategory induces a sequence of cotorsion pairs (Theorem \ref{nCT}). Combining the results in the preceeding sections, we will see that all these have equivalent hearts, which are equivalent to some category of coherent functors (Corollary \ref{CornCT}).

\bigskip

Let us briefly recall the definition and basic properties of extriangulated categories from \cite{NP}. Throughout this article, let $\B$ be an additive category.

\begin{defn}\label{DefExtension}
Suppose $\B$ is equipped with an additive bifunctor $\mathbb{E}\colon\B^\mathrm{op}\times\B\to\mathit{Ab}$. For any pair of objects $A,C\in\B$, an element $\delta\in\mathbb{E}(C,A)$ is called an {\it $\mathbb{E}$-extension}.
\end{defn}

\begin{rem}
Let $\delta\in\EE(C,A)$ be any $\mathbb{E}$-extension. By the functoriality, for any $a\in\B(A,A^{\prime})$ and $c\in\B(C^{\prime},C)$, we have $\mathbb{E}$-extensions
\[ \mathbb{E}(C,a)(\delta)\in\mathbb{E}(C,A^{\prime})\ \ \text{and}\ \ \mathbb{E}(c,A)(\delta)\in\mathbb{E}(C^{\prime},A). \]
We abbreviately denote them by $a_{\ast}\delta$ and $c^{\ast}\delta$.
In this terminology, we have
\[ \mathbb{E}(c,a)(\delta)=c^{\ast} a_{\ast}\delta=a_{\ast} c^{\ast}\delta \]
in $\mathbb{E}(C^{\prime},A^{\prime})$.
\end{rem}

\begin{defn}\label{DefMorphExt}
Let $\delta\in\EE(C,A),\delta^{\prime}\in\EE(C^{\prime},A^{\prime})$ be any pair of $\mathbb{E}$-extensions. A {\it morphism} $(a,c)\colon\delta\to\delta^{\prime}$ of $\mathbb{E}$-extensions is a pair of morphisms $a\in\B(A,A^{\prime})$ and $c\in\B(C,C^{\prime})$ in $\B$, satisfying the equality
\[ a_{\ast}\delta=c^{\ast}\delta^{\prime}. \]
We simply denote it as $(a,c)\colon\delta\to\delta^{\prime}$.
\end{defn}

\begin{defn}\label{DefSplitExtension}
For any $A,C\in\B$, the zero element $0\in\mathbb{E}(C,A)$ is called the {\it split $\mathbb{E}$-extension}.
\end{defn}

\begin{defn}\label{DefSumExtension}
Let $\delta=(A,\delta,C),\delta^{\prime}=(A^{\prime},\delta^{\prime},C^{\prime})$ be any pair of $\mathbb{E}$-extensions. Let
\[ C\overset{\iota_C}{\longrightarrow}C\oplus C^{\prime}\overset{\iota_{C^{\prime}}}{\longleftarrow}C^{\prime} \]
and
\[ A\overset{p_A}{\longleftarrow}A\oplus A^{\prime}\overset{p_{A^{\prime}}}{\longrightarrow}A^{\prime} \]
be coproduct and product in $\B$, respectively. Remark that, by the additivity of $\mathbb{E}$, we have a natural isomorphism
\[ \mathbb{E}(C\oplus C^{\prime},A\oplus A^{\prime})\cong \mathbb{E}(C,A)\oplus\mathbb{E}(C,A^{\prime})\oplus\mathbb{E}(C^{\prime},A)\oplus\mathbb{E}(C^{\prime},A^{\prime}). \]

Let $\delta\oplus\delta^{\prime}\in\mathbb{E}(C\oplus C^{\prime},A\oplus A^{\prime})$ be the element corresponding to $(\delta,0,0,\delta^{\prime})$ through this isomorphism. This is the unique element which satisfies
\begin{eqnarray*}
\mathbb{E}(\iota_C,p_A)(\delta\oplus\delta^{\prime})=\delta&,&\mathbb{E}(\iota_C,p_{A^{\prime}})(\delta\oplus\delta^{\prime})=0,\\
\mathbb{E}(\iota_{C^{\prime}},p_A)(\delta\oplus\delta^{\prime})=0&,&\mathbb{E}(\iota_{C^{\prime}},p_{A^{\prime}})(\delta\oplus\delta^{\prime})=\delta^{\prime}.
\end{eqnarray*}
\end{defn}

\begin{defn}\label{DefSqEquiv}
Let $A,C\in\B$ be any pair of objects. Sequences of morphisms in $\B$
\[ A\overset{x}{\longrightarrow}B\overset{y}{\longrightarrow}C\ \ \text{and}\ \ A\overset{x^{\prime}}{\longrightarrow}B^{\prime}\overset{y^{\prime}}{\longrightarrow}C \]
are said to be {\it equivalent} if there exists an isomorphism $b\in\B(B,B^{\prime})$ which makes the following diagram commutative.
\[
\xy
(-16,0)*+{A}="0";
(3,0)*+{}="1";
(0,8)*+{B}="2";
(0,-8)*+{B^{\prime}}="4";
(-3,0)*+{}="5";
(16,0)*+{C}="6";
{\ar^{x} "0";"2"};
{\ar^{y} "2";"6"};
{\ar_{x^{\prime}} "0";"4"};
{\ar_{y^{\prime}} "4";"6"};
{\ar^{b}_{\cong} "2";"4"};
\endxy
\]

We denote the equivalence class of $A\overset{x}{\longrightarrow}B\overset{y}{\longrightarrow}C$ by $[A\overset{x}{\longrightarrow}B\overset{y}{\longrightarrow}C]$.
\end{defn}

\begin{defn}\label{DefAddSeq}
$\ \ $
\begin{enumerate}
\item[(1)] For any $A,C\in\B$, we denote as
\[ 0=[A\overset{\Big[\raise1ex\hbox{\leavevmode\vtop{\baselineskip-8ex \lineskip1ex \ialign{#\crcr{$\scriptstyle{1}$}\crcr{$\scriptstyle{0}$}\crcr}}}\Big]}{\longrightarrow}A\oplus C\overset{[0\ 1]}{\longrightarrow}C]. \]

\item[(2)] For any $[A\overset{x}{\longrightarrow}B\overset{y}{\longrightarrow}C]$ and $[A^{\prime}\overset{x^{\prime}}{\longrightarrow}B^{\prime}\overset{y^{\prime}}{\longrightarrow}C^{\prime}]$, we denote as
\[ [A\overset{x}{\longrightarrow}B\overset{y}{\longrightarrow}C]\oplus [A^{\prime}\overset{x^{\prime}}{\longrightarrow}B^{\prime}\overset{y^{\prime}}{\longrightarrow}C^{\prime}]=[A\oplus A^{\prime}\overset{x\oplus x^{\prime}}{\longrightarrow}B\oplus B^{\prime}\overset{y\oplus y^{\prime}}{\longrightarrow}C\oplus C^{\prime}]. \]
\end{enumerate}
\end{defn}

\begin{defn}\label{DefRealization}
Let $\mathfrak{s}$ be a correspondence which associates an equivalence class $\mathfrak{s}(\delta)=[A\overset{x}{\longrightarrow}B\overset{y}{\longrightarrow}C]$ to any $\mathbb{E}$-extension $\delta\in\mathbb{E}(C,A)$. This $\mathfrak{s}$ is called a {\it realization} of $\mathbb{E}$, if it satisfies the following condition $(\ast)$. In this case, we say that sequence $A\overset{x}{\longrightarrow}B\overset{y}{\longrightarrow}C$ {\it realizes} $\delta$, whenever it satisfies $\mathfrak{s}(\delta)=[A\overset{x}{\longrightarrow}B\overset{y}{\longrightarrow}C]$.
\begin{itemize}
\item[$(\ast)$] Let $\delta\in\mathbb{E}(C,A)$ and $\delta^{\prime}\in\mathbb{E}(C^{\prime},A^{\prime})$ be any pair of $\mathbb{E}$-extensions, with $\mathfrak{s}(\delta)=[A\overset{x}{\longrightarrow}B\overset{y}{\longrightarrow}C],\, \mathfrak{s}(\delta^{\prime})=[A^{\prime}\overset{x^{\prime}}{\longrightarrow}B^{\prime}\overset{y^{\prime}}{\longrightarrow}C^{\prime}]$.
Then, for any morphism $(a,c)\colon\delta\to\delta^{\prime}$, there exists $b\in\B(B,B^{\prime})$ which makes the following diagram commutative.
\begin{equation}\label{MorphRealize}
\xy
(-12,6)*+{A}="0";
(0,6)*+{B}="2";
(12,6)*+{C}="4";
(-12,-6)*+{A^{\prime}}="10";
(0,-6)*+{B^{\prime}}="12";
(12,-6)*+{C^{\prime}}="14";
{\ar^{x} "0";"2"};
{\ar^{y} "2";"4"};
{\ar_{a} "0";"10"};
{\ar^{b} "2";"12"};
{\ar^{c} "4";"14"};
{\ar_{x^{\prime}} "10";"12"};
{\ar_{y^{\prime}} "12";"14"};
\endxy
\end{equation}
\end{itemize}
In the above situation, we say that the triplet $(a,b,c)$ {\it realizes} $(a,c)$.
\end{defn}

\begin{defn}\label{DefAdditiveRealization}
Let $\B,\mathbb{E}$ be as above. A realization of $\mathbb{E}$ is said to be {\it additive}, if it satisfies the following conditions.
\begin{itemize}
\item[{\rm (i)}] For any $A,C\in\B$, the split $\mathbb{E}$-extension $0\in\mathbb{E}(C,A)$ satisfies
\[ \mathfrak{s}(0)=0. \]
\item[{\rm (ii)}] For any pair of $\mathbb{E}$-extensions $\delta\in\EE(C,A)$ and $\delta^{\prime}\in\EE(C^{\prime},A^{\prime})$,
\[ \mathfrak{s}(\delta\oplus\delta^{\prime})=\mathfrak{s}(\delta)\oplus\mathfrak{s}(\delta^{\prime}) \]
holds.
\end{itemize}
\end{defn}

\begin{defn}\label{DefExtCat}$($\cite[Definition 2.12]{NP}$)$
A triplet $(\B,\mathbb{E},\mathfrak{s})$ is called an {\it extriangulated category} if it satisfies the following conditions.
\begin{itemize}
\item[{\rm (ET1)}] $\mathbb{E}\colon\B^{\mathrm{op}}\times\B\to\mathit{Ab}$ is an additive bifunctor.
\item[{\rm (ET2)}] $\mathfrak{s}$ is an additive realization of $\mathbb{E}$.
\item[{\rm (ET3)}] Let $\delta\in\mathbb{E}(C,A)$ and $\delta^{\prime}\in\mathbb{E}(C^{\prime},A^{\prime})$ be any pair of $\mathbb{E}$-extensions, realized as
\[ \mathfrak{s}(\delta)=[A\overset{x}{\longrightarrow}B\overset{y}{\longrightarrow}C],\ \ \mathfrak{s}(\delta^{\prime})=[A^{\prime}\overset{x^{\prime}}{\longrightarrow}B^{\prime}\overset{y^{\prime}}{\longrightarrow}C^{\prime}]. \]
For any commutative square
\begin{equation}\label{SquareForET3}
\xy
(-12,6)*+{A}="0";
(0,6)*+{B}="2";
(12,6)*+{C}="4";
(-12,-6)*+{A^{\prime}}="10";
(0,-6)*+{B^{\prime}}="12";
(12,-6)*+{C^{\prime}}="14";
{\ar^{x} "0";"2"};
{\ar^{y} "2";"4"};
{\ar_{a} "0";"10"};
{\ar^{b} "2";"12"};
{\ar_{x^{\prime}} "10";"12"};
{\ar_{y^{\prime}} "12";"14"};
\endxy
\end{equation}
in $\B$, there exists a morphism $(a,c)\colon\delta\to\delta^{\prime}$ satisfying $cy=y^{\prime}b$.
\item[{\rm (ET3)$^{\mathrm{op}}$}] Let $\delta\in\mathbb{E}(C,A)$ and $\delta^{\prime}\in\mathbb{E}(C^{\prime},A^{\prime})$ be any pair of $\mathbb{E}$-extensions, realized by
\[ A\overset{x}{\longrightarrow}B\overset{y}{\longrightarrow}C\ \ \text{and}\ \ A^{\prime}\overset{x^{\prime}}{\longrightarrow}B^{\prime}\overset{y^{\prime}}{\longrightarrow}C^{\prime} \]
respectively.
For any commutative square
\[
\xy
(-12,6)*+{A}="0";
(0,6)*+{B}="2";
(12,6)*+{C}="4";
(-12,-6)*+{A^{\prime}}="10";
(0,-6)*+{B^{\prime}}="12";
(12,-6)*+{C^{\prime}}="14";
{\ar^{x} "0";"2"};
{\ar^{y} "2";"4"};
{\ar_{b} "2";"12"};
{\ar^{c} "4";"14"};
{\ar_{x^{\prime}} "10";"12"};
{\ar_{y^{\prime}} "12";"14"};
\endxy
\]
in $\B$, there exists a morphism $(a,c)\colon\delta\to\delta^{\prime}$ satisfying $bx=x^{\prime}a$.
\item[{\rm (ET4)}] Let $\delta\in\EE(D,A)$ and $\delta^{\prime}\in\EE(F,B)$ be $\mathbb{E}$-extensions realized by
\[ A\overset{f}{\longrightarrow}B\overset{f^{\prime}}{\longrightarrow}D\ \ \text{and}\ \ B\overset{g}{\longrightarrow}C\overset{g^{\prime}}{\longrightarrow}F \]
respectively. Then there exist an object $E\in\B$, a commutative diagram
\begin{equation}\label{DiagET4}
\xy
(-21,7)*+{A}="0";
(-7,7)*+{B}="2";
(7,7)*+{D}="4";
(-21,-7)*+{A}="10";
(-7,-7)*+{C}="12";
(7,-7)*+{E}="14";
(-7,-21)*+{F}="22";
(7,-21)*+{F}="24";
{\ar^{f} "0";"2"};
{\ar^{f^{\prime}} "2";"4"};
{\ar@{=} "0";"10"};
{\ar_{g} "2";"12"};
{\ar^{d} "4";"14"};
{\ar_{h} "10";"12"};
{\ar_{h^{\prime}} "12";"14"};
{\ar_{g^{\prime}} "12";"22"};
{\ar^{e} "14";"24"};
{\ar@{=} "22";"24"};
\endxy
\end{equation}
in $\B$, and an $\mathbb{E}$-extension $\delta^{\prime\prime}\in\mathbb{E}(E,A)$ realized by $A\overset{h}{\longrightarrow}C\overset{h^{\prime}}{\longrightarrow}E$, which satisfy the following compatibilities.
\begin{itemize}
\item[{\rm (i)}] $D\overset{d}{\longrightarrow}E\overset{e}{\longrightarrow}F$ realizes $f^{\prime}_{\ast}\delta^{\prime}$,
\item[{\rm (ii)}] $d^{\ast}\delta^{\prime}r=\delta$,

\item[{\rm (iii)}] $f_{\ast}\delta^{\prime}r=e^{\ast}\delta^{\prime}$.
\end{itemize}

\item[{\rm (ET4)$^{\mathrm{op}}$}] Dual of {\rm (ET4)}.
\end{itemize}
\end{defn}

\begin{exm}\label{Example1}
Exact categories and triangulated categories are extriangulated categories. See \cite{NP} for the detail.
\end{exm}

We use the following terminology.
\begin{defn}\label{DefTermExact1}
Let $(\B,\mathbb{E},\mathfrak{s})$ be a triplet satisfying {\rm (ET1)} and {\rm (ET2)}.
\begin{enumerate}
\item[(1)] A sequence $A\overset{x}{\longrightarrow}B\overset{y}{\longrightarrow}C$ is called an {\it conflation} if it realizes some $\mathbb{E}$-extension $\delta\in\mathbb{E}(C,A)$. In this article, we write the conflation as $A\overset{x}{\ra}B\overset{f}{\ta}C$.
\item[(2)] A morphism $f\in\B(A,B)$ is called an {\it inflation} if it admits some conflation $A\overset{f}{\ra}B\ta C$.
\item[(3)] A morphism $f\in\B(A,B)$ is called an {\it deflation} if it admits some conflation $K\ra A\overset{f}{\ta}B$.
\end{enumerate}
\end{defn}

\begin{defn}
Let $(\B,\mathbb{E},\mathfrak{s})$ as in Definition \ref{DefTermExact1}.
\begin{enumerate}
\item An object $P\in\B$ is called {\it projective} if it satisfies $\EE(P,\B)=0$. We denote the subcategory of projective objects by $\mathcal P\subseteq\B$. We say that $\B$ {\it has enough projectives} if any object $B\in\B$ admits a deflation $P\to B$ from some $P\in\mathcal P$.
\item Dually, an object $I\in\B$ is called {\it injective} if it satisfies $\EE(\B,I)=0$. We denote the subcategory of injective objects by $\mathcal I\subseteq\B$. We say that $\B$ {\it has enough injectives} if any object $B\in\B$ admits a inflation $B\to I$ to some $I\in\mathcal I$.
\end{enumerate}
\end{defn}

\begin{defn}\label{DefTermExact2}
Let $(\B,\mathbb{E},\mathfrak{s})$ be a triplet satisfying {\rm (ET1)} and {\rm (ET2)}.
\begin{enumerate}
\item[(1)] If a conflation $A\overset{x}{\ra}B\overset{y}{\ta}C$ realizes $\delta\in\mathbb{E}(C,A)$, we call the pair $(A\overset{x}{\ra}B\overset{y}{\ta}C,\delta)$ an {\it $\EE$-triangle}, and write it in the following way.
\begin{equation}\label{Etriangle}
A\overset{x}{\longrightarrow}B\overset{y}{\longrightarrow}C\overset{\delta}{\dashrightarrow}
\end{equation}
\item[(2)] Let $A\overset{x}{\longrightarrow}B\overset{y}{\longrightarrow}C\overset{\delta}{\dashrightarrow}$ and $A^{\prime}\overset{x^{\prime}}{\longrightarrow}B^{\prime}\overset{y^{\prime}}{\longrightarrow}C^{\prime}\overset{\delta^{\prime}}{\dashrightarrow}$ be any pair of $\EE$-triangles. If a triplet $(a,b,c)$ realizes $(a,c)\colon\delta\to\delta^{\prime}$ as in $(\ref{MorphRealize})$, then we write it as
\[
\xy
(-12,6)*+{A}="0";
(0,6)*+{B}="2";
(12,6)*+{C}="4";
(24,6)*+{}="6";
(-12,-6)*+{A^{\prime}}="10";
(0,-6)*+{B^{\prime}}="12";
(12,-6)*+{C^{\prime}}="14";
(24,-6)*+{}="16";
{\ar^{x} "0";"2"};
{\ar^{y} "2";"4"};
{\ar@{-->}^{\delta} "4";"6"};
{\ar_{a} "0";"10"};
{\ar^{b} "2";"12"};
{\ar^{c} "4";"14"};
{\ar_{x^{\prime}} "10";"12"};
{\ar_{y^{\prime}} "12";"14"};
{\ar@{-->}_{\delta^{\prime}} "14";"16"};
\endxy
\]
and call $(a,b,c)$ a {\it morphism of $\EE$-triangles}.
\end{enumerate}
\end{defn}

\begin{defn}\label{DefYoneda}
Assume $\B$ and $\mathbb{E}$ satisfy {\rm (ET1)}.
By Yoneda's lemma, any $\mathbb{E}$-extension $\delta\in\mathbb{E}(C,A)$ induces natural transformations
\[ \delta_\sharp\colon\B(-,C)\Rightarrow\mathbb{E}(-,A)\ \ \text{and}\ \ \delta^\sharp\colon\B(A,-)\Rightarrow\mathbb{E}(C,-). \]
For any $X\in\B$, these $(\delta_\sharp)_X$ and $\delta^\sharp_X$ are given as follows.
\begin{enumerate}
\item[(1)] $(\delta_\sharp)_X\colon\B(X,C)\to\mathbb{E}(X,A)\ ;\ f\mapsto f^{\ast}\delta$.
\item[(2)] $\delta^\sharp_X\colon\B(A,X)\to\mathbb{E}(C,X)\ ;\ g\mapsto g_{\ast}\delta$.
\end{enumerate}
We abbreviately denote $(\delta_\sharp)_X(f)$ and $\delta^\sharp_X(g)$ by $\delta_\sharp f$ and $\delta^\sharp g$, when there is no confusion.
\end{defn}

\begin{rem}\label{Remdelautom}
By \cite[Corollary 3.8]{NP}, for any $\EE$-triangle $A\overset{x}{\longrightarrow}B\overset{y}{\longrightarrow}C\overset{\delta}{\dashrightarrow}$ and any $\delta^{\prime}\in\EE(C,A)$, the following are equivalent.
\begin{enumerate}
\item $\mathfrak{s}(\delta)=\mathfrak{s}(\delta^{\prime})$.
\item There are automorphisms $a\in\B(A,A),c\in\B(C,C)$ satisfying $xa=x$, $cy=y$ and $\delta^{\prime}=a_{\ast} c^{\ast}\delta$.
\end{enumerate}
\end{rem}

\begin{defn}\label{DefExtClosed}
Let $\mathcal{D}\subseteq\B$ be a full additive subcategory, closed under isomorphisms. We say $\mathcal{D}$ is {\it extension-closed} if it satisfies the following condition.
\begin{itemize}
\item If a conflation $A\rightarrowtail B\twoheadrightarrow C$ satisfies $A,C\in\mathcal{D}$, then $B\in\mathcal{D}$.
\end{itemize}
For two subcategories $\mathcal{D}_1,\mathcal{D}_2\subseteq \B$, we denote by $\mathcal{D}_1\ast\mathcal{D}_2$ the subcategory which consists of the objects $X$ admitting a conflation $D_1\ra X\ta D_2$ for some $D_1\in \mathcal{D}_1$ and $D_2\in \mathcal{D}_2$. In this notation, the above condition can be written as $\mathcal{D}\ast\mathcal{D}\subseteq\mathcal{D}$.
\end{defn}

In the rest of this article, we fix an extriangulated category $(\B,\EE,\mathfrak{s})$.
The following have been shown in \cite[Propositions 3.3, 3.11, 3.15]{NP}.
\begin{fact}\label{FactExact}
Let $A\overset{x}{\longrightarrow}B\overset{y}{\longrightarrow}C\overset{\delta}{\dashrightarrow}$ be any $\EE$-triangle.
Then the following sequences of natural transformations are exact.
\[ \B(C,-)\overset{-\circ y}{\Longrightarrow}\B(B,-)\overset{-\circ x}{\Longrightarrow}\B(A,-)\overset{\delta^{\sharp}}{\Longrightarrow}\mathbb{E}(C,-)\overset{y^{\ast}}{\Longrightarrow}\mathbb{E}(B,-)\overset{x^{\ast}}{\Longrightarrow}\mathbb{E}(A,-), \]
\[ \B(-,A)\overset{x\circ-}{\Longrightarrow}\B(-,B)\overset{y\circ-}{\Longrightarrow}\B(-,C)\overset{\delta_{\sharp}}{\Longrightarrow}\mathbb{E}(-,A)\overset{x_{\ast}}{\Longrightarrow}\mathbb{E}(-,B)\overset{y_{\ast}}{\Longrightarrow}\mathbb{E}(-,C). \]
\end{fact}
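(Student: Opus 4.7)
The two displayed sequences are exchanged by passage to the opposite extriangulated category (which swaps (ET3) $\leftrightarrow$ (ET3)$^{\op}$ and (ET4) $\leftrightarrow$ (ET4)$^{\op}$), so I will prove only the first. At each of the four internal positions I verify vanishing of the composition and then the reverse inclusion.

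The vanishings reduce to three identities: $yx=0$, $x_{\ast}\delta=0$, and $y^{\ast}\delta=0$, each obtained by comparing $\delta$ with a split $\EE$-triangle via (ET3) or (ET3)$^{\op}$. Applying (ET3)$^{\op}$ to our $\EE$-triangle and a split realization of $0\in\EE(C,B)$ with $b=\svecv{0}{y}\colon B\to B\oplus C$ and $c=\id_{C}$, the resulting $a\colon A\to B$ satisfies $\svecv{0}{yx}=\svecv{a}{0}$, forcing $yx=0$. Replacing $b$ by $\svecv{\id_{B}}{0}$ and using (ET3) instead produces a morphism $(x,c)\colon\delta\to 0$ of $\EE$-extensions, yielding $x_{\ast}\delta=c^{\ast}0=0$; the identity $y^{\ast}\delta=0$ follows dually.

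For the reverse inclusions at the first three positions, (ET3), (ET3)$^{\op}$, and the realization condition $(\ast)$ suffice. At $\B(B,X)$, given $g\colon B\to X$ with $gx=0$, apply (ET3) with bottom row the split triangle $0\to X\xrightarrow{\id}X$ and $(a,b)=(0,g)$ to obtain $c\colon C\to X$ with $cy=g$. At $\B(A,X)$, given $h\colon A\to X$ with $h_{\ast}\delta=0$, realize $h_{\ast}\delta$ by the split conflation $X\to X\oplus C\to C$; condition $(\ast)$ applied to the morphism $(h,\id_{C})\colon\delta\to h_{\ast}\delta$ produces a middle map $\svecv{g}{y}\colon B\to X\oplus C$, whose first coordinate gives $gx=h$. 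At $\EE(C,X)$, given $\xi$ with $y^{\ast}\xi=0$, realize $\xi$ by $X\to E\xrightarrow{y''}C$; combining the split section of the realization of $y^{\ast}\xi$ with the middle map supplied by $(\ast)$ yields $b\colon B\to E$ with $y''b=y$, and then (ET3)$^{\op}$ produces $h\colon A\to X$ realizing the morphism $(h,\id_{C})\colon\delta\to\xi$, i.e.\ $h_{\ast}\delta=\xi$.

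The deepest step, and the main obstacle, is exactness at $\EE(B,X)$, which requires the octahedron-type axiom (ET4)$^{\op}$. Given $\eta\in\EE(B,X)$ with $x^{\ast}\eta=0$, realize $\eta$ by $X\xrightarrow{x''}E\xrightarrow{y''}B$ and apply (ET4)$^{\op}$ to the composable deflations $y''$ and $y$. This yields a conflation $K\to E\xrightarrow{yy''}C\dashrightarrow^{\zeta}$ together with an auxiliary conflation $X\to K\to A$ whose class in $\EE(A,X)$ equals $x^{\ast}\eta$ by the octahedral compatibilities. The hypothesis $x^{\ast}\eta=0$ makes this auxiliary conflation split, giving a canonical isomorphism $K\cong X\oplus A$, so $\zeta$ decomposes in $\EE(C,K)=\EE(C,X)\oplus\EE(C,A)$; define $\xi\in\EE(C,X)$ as the $X$-component. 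The remaining (ET4)$^{\op}$ compatibility, analogous to $f_{\ast}\delta''=e^{\ast}\delta'$ in (ET4), then delivers $y^{\ast}\xi=\eta$. The subtle part is the bookkeeping: correctly matching the three (ET4)$^{\op}$ compatibilities against the canonical splitting of $K$ in order to isolate the correct component of $\zeta$ and to read off $y^{\ast}\xi=\eta$.
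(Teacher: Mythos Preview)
Your argument is correct. The paper does not supply its own proof of this fact; it simply records it as a consequence of \cite[Propositions~3.3, 3.11, 3.15]{NP}. Your proposal reconstructs exactly that argument: the exactness at the three $\B(-,-)$ positions comes from (ET3)/(ET3)$^{\op}$ and the realization axiom~$(\ast)$, the exactness at $\EE(C,-)$ uses a lift through a realization of $\xi$ followed by (ET3)$^{\op}$, and the exactness at $\EE(B,-)$ requires (ET4)$^{\op}$. So your route coincides with the reference the paper defers to.

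One remark on your final step, where you left the bookkeeping slightly implicit. With the (ET4)$^{\op}$ compatibilities written out explicitly, the three conditions dual to (i)--(iii) of (ET4) read, in your notation,
\[
\text{(i)}^{\op}:\ X\xrightarrow{e}K\xrightarrow{d}A\ \text{realizes}\ x^{\ast}\eta,\qquad
\text{(ii)}^{\op}:\ d_{\ast}\zeta=\delta,\qquad
\text{(iii)}^{\op}:\ y^{\ast}\zeta=e_{\ast}\eta.
\]
Since $x^{\ast}\eta=0$, (i)$^{\op}$ gives a retraction $r\colon K\to X$ with $re=\id_X$. Setting $\xi=r_{\ast}\zeta$, condition (iii)$^{\op}$ then yields
\[
y^{\ast}\xi=r_{\ast}(y^{\ast}\zeta)=r_{\ast}e_{\ast}\eta=(re)_{\ast}\eta=\eta,
\]
which is the identity you need. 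This makes precise the phrase ``matching the compatibilities against the canonical splitting of $K$'' and confirms that no further subtlety is hidden there.
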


\begin{fact}\label{FactNP}
The following holds.
\begin{enumerate}
\item[(1)] Let $A_1\overset{x_1}{\longrightarrow}B_1\overset{y_1}{\longrightarrow}C\overset{\delta_1}{\dashrightarrow}$ and $A_2\overset{x_2}{\longrightarrow}B_2\overset{y_2}{\longrightarrow}C\overset{\delta_2}{\dashrightarrow}$ be any pair of $\mathbb{E}$-triangles. Then there is a commutative diagram in $\B$
\begin{equation}\label{Diag_PropBaer}
\xy
(-7,21)*+{A_2}="-12";
(7,21)*+{A_2}="-14";
(-21,7)*+{A_1}="0";
(-7,7)*+{M}="2";
(7,7)*+{B_2}="4";
(-21,-7)*+{A_1}="10";
(-7,-7)*+{B_1}="12";
(7,-7)*+{C}="14";
{\ar@{=} "-12";"-14"};
{\ar_{m_2} "-12";"2"};
{\ar^{x_2} "-14";"4"};
{\ar^{m_1} "0";"2"};
{\ar^{e_1} "2";"4"};
{\ar@{=} "0";"10"};
{\ar_{e_2} "2";"12"};
{\ar^{y_2} "4";"14"};
{\ar_{x_1} "10";"12"};
{\ar_{y_1} "12";"14"};
{\ar@{}|\circlearrowright "-12";"4"};
{\ar@{}|\circlearrowright "0";"12"};
{\ar@{}|\circlearrowright "2";"14"};
\endxy
\end{equation}
which satisfies
\begin{eqnarray*}
&\mathfrak{s}(y_2^{\ast}\delta_1)=[A_1\overset{m_1}{\longrightarrow}M\overset{e_1}{\longrightarrow}B_2],&\\
&\mathfrak{s}(y_1^{\ast}\delta_2)=[A_2\overset{m_2}{\longrightarrow}M\overset{e_2}{\longrightarrow}B_1],&\\
&m_{1\ast}\delta_1+m_{2\ast}\delta_2=0.&
\end{eqnarray*}
\item[(2)] Dual of {\rm (1)}.
\end{enumerate}
\end{fact}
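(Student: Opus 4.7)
The plan is to construct $M$ as a \emph{homotopy pullback} of the two deflations $y_1$ and $y_2$ over $C$, working entirely inside the $\EE$-extension structure, and then to extract the left column of the diagram by a cancellation argument based on (ET4)$^{\mathrm{op}}$.

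First I would pull back $\delta_1\in\EE(C,A_1)$ along $y_2\in\B(B_2,C)$ to obtain $y_2^{\ast}\delta_1\in\EE(B_2,A_1)$ and choose any realization $\mathfrak{s}(y_2^{\ast}\delta_1)=[A_1\overset{m_1}{\rightarrowtail}M\overset{e_1}{\twoheadrightarrow}B_2]$. This produces the object $M$ and the top row of (\ref{Diag_PropBaer}). Since $(\id_{A_1},y_2)$ is tautologically a morphism of $\EE$-extensions from $y_2^{\ast}\delta_1$ to $\delta_1$, the realization axiom $(\ast)$ of Definition \ref{DefRealization} yields $e_2\in\B(M,B_1)$ making $(\id_{A_1},e_2,y_2)$ a morphism of $\EE$-triangles, so the identities $e_2 m_1=x_1$ and $y_1 e_2=y_2 e_1$ hold. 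This fills in the bottom two squares of the diagram.

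Next, to obtain the left column $A_2\overset{m_2}{\rightarrowtail}M\overset{e_2}{\twoheadrightarrow}B_1$, I would apply (ET4)$^{\mathrm{op}}$ to the composable deflations $e_1:M\twoheadrightarrow B_2$ (with kernel $m_1:A_1\rightarrowtail M$) and $y_2:B_2\twoheadrightarrow C$ (with kernel $x_2:A_2\rightarrowtail B_2$). This gives an auxiliary object $K$ together with conflations $K\rightarrowtail M\twoheadrightarrow C$ and $A_1\rightarrowtail K\twoheadrightarrow A_2$, all fitted into a commutative $3\times 3$ diagram, and furnishes the compatibility identities among the associated $\EE$-extensions. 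Since the composite $y_1 e_2=y_2 e_1$ agrees with the deflation $M\twoheadrightarrow C$ from the (ET4)$^{\mathrm{op}}$ output, comparing this with the conflation $A_1\overset{x_1}{\rightarrowtail}B_1\overset{y_1}{\twoheadrightarrow}C$ via Fact \ref{FactExact} (applied to $\B(-,B_1)$ and $\B(-,C)$) lets me factor $K\rightarrowtail M$ through $m_1:A_1\rightarrowtail M$ on the $A_1$-part and obtain a map $m_2:A_2\to M$ as the induced map on the $A_2$-quotient, with $e_2\circ m_2=0$. A further application of (ET3) then identifies $A_2\overset{m_2}{\rightarrowtail}M\overset{e_2}{\twoheadrightarrow}B_1$ as a conflation realizing $y_1^{\ast}\delta_2$.

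Finally, the identity $m_{1\ast}\delta_1+m_{2\ast}\delta_2=0$ in $\EE(C,M)$ will be a direct consequence of the $\EE$-extension compatibilities coming from the (ET4)$^{\mathrm{op}}$ diagram of Step~3, read off through the long exact sequence $\B(M,-)\to\B(K,-)\to\EE(C,-)\to\EE(M,-)$ associated to $K\rightarrowtail M\twoheadrightarrow C$. The main obstacle is Step~3: the dual octahedron (ET4)$^{\mathrm{op}}$ naturally computes the kernel of the \emph{composite} $M\to C$ rather than of $e_2:M\to B_1$, so the essence of the proof is the cancellation argument that isolates $m_2:A_2\rightarrowtail M$ from the bigger kernel $K$, using the hypothesis that $e_2$ already kills $m_1(A_1)$. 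Once this step is accomplished, statement (2) follows by dualising the entire construction inside $(\B,\EE,\mathfrak s)^{\mathrm{op}}$.
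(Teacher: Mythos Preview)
The paper does not give its own proof of Fact~\ref{FactNP}; it is quoted as a result from \cite[Proposition~3.15]{NP}. So there is no in-paper argument to compare against, and I will assess your sketch on its own terms.

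Your Steps~1 and~2 are correct and standard: realizing $y_2^{\ast}\delta_1$ yields $A_1\overset{m_1}{\rightarrowtail}M\overset{e_1}{\twoheadrightarrow}B_2$, and the realization axiom provides $e_2\colon M\to B_1$ with $e_2m_1=x_1$ and $y_1e_2=y_2e_1$. The difficulty is entirely in Step~3, and as written it contains a genuine gap. Applying {\rm (ET4)}$^{\mathrm{op}}$ to the composable deflations $M\overset{e_1}{\twoheadrightarrow}B_2\overset{y_2}{\twoheadrightarrow}C$ produces
\[
\xymatrix{
A_1 \ar[d]_{j} \ar@{=}[r] & A_1 \ar[d]^{m_1} & \\
K \ar[r]^{k} \ar[d]_{d} & M \ar[r] \ar[d]^{e_1} & C \ar@{=}[d] \\
A_2 \ar[r]_{x_2} & B_2 \ar[r]_{y_2} & C
}
\]
in which $A_2$ is the \emph{cokernel} of $j\colon A_1\to K$. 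Your proposed ``induced map on the $A_2$-quotient'' cannot exist in general: $k\circ j=m_1\ne 0$, so $k$ does not descend to $A_2$. No amount of comparing with $A_1\overset{x_1}{\rightarrowtail}B_1\overset{y_1}{\twoheadrightarrow}C$ via Fact~\ref{FactExact} will produce an arrow $A_2\to M$ out of a quotient.

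What actually rescues the argument is a point you do not mention: the compatibility clause of {\rm (ET4)}$^{\mathrm{op}}$ dual to condition~(i) in {\rm (ET4)} says that the left column $A_1\overset{j}{\rightarrowtail}K\overset{d}{\twoheadrightarrow}A_2$ realizes $x_2^{\ast}(y_2^{\ast}\delta_1)=(y_2x_2)^{\ast}\delta_1=0$. Hence this conflation \emph{splits}, $K\cong A_1\oplus A_2$, and a choice of section $s\colon A_2\to K$ gives $m_2:=k\circ s$. This is the cancellation mechanism you were reaching for. After that, one must still verify that $A_2\overset{m_2}{\to}M\overset{e_2}{\to}B_1$ is a conflation realizing $y_1^{\ast}\delta_2$ and that $e_1m_2=x_2$; this requires a further use of the remaining {\rm (ET4)}$^{\mathrm{op}}$ compatibilities (the duals of (ii) and (iii)) together with another application of {\rm (ET4)} to the composable inflations $A_1\rightarrowtail K\rightarrowtail M$, rather than the bare {\rm (ET3)} you invoke. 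The identity $m_{1\ast}\delta_1+m_{2\ast}\delta_2=0$ then drops out from $k_{\ast}\delta''=0$ for the extension $\delta''$ of the middle row, once one has identified its two components under $K\cong A_1\oplus A_2$. In short: the overall strategy is viable, but the crucial splitting of $A_1\to K\to A_2$ is missing from your outline, and without it Step~3 does not go through.
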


The following lemma is another version of \cite[Corollary 3.16]{NP}.
\begin{prop}\label{PBPO}
Let $A\overset{x}{\longrightarrow}B\overset{y}{\longrightarrow}C\overset{\delta}{\dashrightarrow}$ be any $\EE$-triangle, let $f\colon A\rightarrow D$ be any morphism, and let $D\overset{d}{\longrightarrow}E\overset{e}{\longrightarrow}C\overset{f_{\ast}\delta}{\dashrightarrow}$ be any $\EE$-triangle realizing $f_{\ast}\delta$. Then there is a morphism $g$ which gives a morphism of $\EE$-triangles
\begin{equation}\label{diag_PBPO}
\xymatrix{
A \ar[r]^{x} \ar[d]_f &B \ar[r]^{y} \ar[d]^g &C \ar@{=}[d]\ar@{-->}[r]^{\delta}&\\
D \ar[r]_{d} &E \ar[r]_{e} &C\ar@{-->}[r]_{f_{\ast}\delta}&
}
\end{equation}
and moreover, $A\overset{\svecv{-f}{x}}{\longrightarrow}D\oplus B\overset{\svech{d}{g}}{\longrightarrow}E\overset{e^{\ast}\delta}{\dashrightarrow}$ becomes an $\EE$-triangle.
\end{prop}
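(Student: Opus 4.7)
The plan is to apply Fact \ref{FactNP} (1) to the pair of $\mathbb{E}$-triangles
\[ D\overset{d}{\longrightarrow}E\overset{e}{\longrightarrow}C\overset{f_{\ast}\delta}{\dashrightarrow}\quad\text{and}\quad A\overset{x}{\longrightarrow}B\overset{y}{\longrightarrow}C\overset{\delta}{\dashrightarrow}, \]
both terminating at $C$. This produces an object $M$, a commutative diagram of the shape $(\ref{Diag_PropBaer})$, and two $\mathbb{E}$-triangles
\[ D\overset{m_1}{\longrightarrow}M\overset{e_1}{\longrightarrow}B\overset{y^{\ast}(f_{\ast}\delta)}{\dashrightarrow}\quad\text{and}\quad A\overset{m_2}{\longrightarrow}M\overset{e_2}{\longrightarrow}E\overset{e^{\ast}\delta}{\dashrightarrow}, \]
together with the compatibility $m_{1\ast}(f_{\ast}\delta)+m_{2\ast}\delta=0$.

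The crucial observation is that $y^{\ast}\delta=0$ by Fact \ref{FactExact} (as $\delta=\delta^{\sharp}(1_A)$ lies in $\Im(\delta^{\sharp})=\Ker(y^{\ast})$), hence $y^{\ast}(f_{\ast}\delta)=f_{\ast}(y^{\ast}\delta)=0$. Therefore the first of the above $\mathbb{E}$-triangles is split, and by Remark \ref{Remdelautom} we may identify $M=D\oplus B$ with $m_1=\svecv{1}{0}$ and $e_1=\svech{0}{1}$. Writing $m_2=\svecv{\mu}{\nu}$ and $e_2=\svech{\alpha}{\beta}$, the commutativities of the $3\times 3$ diagram force $\nu=x$, $\alpha=d$, and $e\beta=y$. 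Decomposing $\mathbb{E}(C,D\oplus B)\cong \mathbb{E}(C,D)\oplus\mathbb{E}(C,B)$, the compatibility relation then becomes $(\mu+f)_{\ast}\delta=0$ (the $B$-component being the automatic $x_{\ast}\delta=0$). By the exactness in Fact \ref{FactExact}, there is a morphism $k\colon B\to D$ with $\mu+f=kx$.

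Finally, the shear automorphism $\phi=\left(\begin{smallmatrix}1 & -k \\ 0 & 1\end{smallmatrix}\right)$ of $D\oplus B$ exhibits an equivalence of realizations in the sense of Definition \ref{DefSqEquiv}, so $A\overset{\phi m_2}{\longrightarrow}D\oplus B\overset{e_2\phi^{-1}}{\longrightarrow}E$ still realizes $e^{\ast}\delta$. A direct computation gives $\phi m_2=\svecv{-f}{x}$ and $e_2\phi^{-1}=\svech{d}{g}$ with $g:=\beta+dk$. The zero composition $\svech{d}{g}\svecv{-f}{x}=0$ inside this new $\mathbb{E}$-triangle yields $gx=df$, and $eg=e\beta+(ed)k=y+0=y$; hence $(f,g,1_C)$ realizes the morphism of $\mathbb{E}$-triangles in $(\ref{diag_PBPO})$, and the displayed $\mathbb{E}$-triangle holds. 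The main technical obstacle lies in this last paragraph: correctly extracting the sign in $\svecv{-f}{x}$ by means of the shear $\phi$ built from the Fact \ref{FactNP} compatibility, and carrying out the bookkeeping needed to verify $gx=df$ and $eg=y$ after the change of basis.
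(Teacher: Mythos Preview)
Your proof is correct and follows essentially the same approach as the paper's: apply Fact~\ref{FactNP} to the two $\mathbb{E}$-triangles ending at $C$, use $y^{\ast}(f_{\ast}\delta)=0$ to identify $M\cong D\oplus B$ with the standard split maps, extract $(\mu+f)_{\ast}\delta=0$ from the compatibility $m_{1\ast}(f_{\ast}\delta)+m_{2\ast}\delta=0$, lift via Fact~\ref{FactExact} to obtain $k$ with $\mu+f=kx$, and then shear by $\left(\begin{smallmatrix}1&-k\\0&1\end{smallmatrix}\right)$ to produce the required $\mathbb{E}$-triangle and the morphism $g=\beta+dk$. The only cosmetic remark is that the identification of the split $\mathbb{E}$-triangle comes more directly from the additivity of $\mathfrak{s}$ (Definition~\ref{DefAdditiveRealization}\,(i)) than from Remark~\ref{Remdelautom}; the paper simply says ``we may assume'' at that step.
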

\begin{proof}
By Fact \ref{FactNP}, we get the following commutative diagram made of $\EE$-triangles
$$\xymatrix{
&A \ar[d]_-{\svecv{{}^{\exists}h}{x}} \ar@{=}[r] &A \ar[d]^x&\\
D \ar[r]_{\svecv{1}{0}} \ar@{=}[d] &D\oplus B \ar[r]^{\svech{0}{1}} \ar[d]^{\svech{d}{{}^{\exists}g^{\prime}}} &B \ar[d]^y\ar@{-->}[r]^0&\\
D \ar[r]_d &E \ar@{-->}[d]_{e^{\ast}\delta}\ar[r]_e &C \ar@{-->}[d]^{\delta}\ar@{-->}[r]^{f_{\ast}\delta}&\\
&&&
}
$$
satisfying $\svecv{h}{x}_{\ast}\delta+\svecv{1}{0}_{\ast}f_{\ast}\delta=0$, in which we may assume that the middle row is of the form $D\overset{\svecv{1}{0}}{\longrightarrow}D\oplus B\overset{\svech{0}{1}}{\longrightarrow}B\overset{0}{\dashrightarrow}$ since we have $y^{\ast}f_{\ast}\delta=f_{\ast}y^{\ast}\delta=0$.
In particular, $A\overset{\svecv{h}{x}}{\longrightarrow}D\oplus B\overset{\svech{d}{g^{\prime}}}{\longrightarrow}E\overset{e^{\ast}\delta}{\dashrightarrow}$ is an $\EE$-triangle. By the above equality, we have
\[ (f+h)_{\ast}\delta=\svech{1}{0}_{\ast}\Big(\svecv{h}{x}_{\ast}\delta+\svecv{1}{0}_{\ast}f_{\ast}\delta\Big)=0 \]
in $\EE(C,D)$. Thus by the exactness of $\B(B,D)\overset{-\circ x}{\longrightarrow}\B(A,D)\overset{\delta^{\sharp}}{\longrightarrow}\EE(C,D)$, there is $b\in\B(B,D)$ which gives $f+h=bx$.

For the isomorphism $i=\left(\begin{array}{cc}1&-b\\0&1\end{array}\right)\colon D\oplus B\overset{\cong}{\longrightarrow}D\oplus B$, the following diagram is commutative.
\[
\xy
(-16,0)*+{A}="0";
(3,0)*+{}="1";
(0,8)*+{D\oplus B}="2";
(0,-8)*+{D\oplus B}="4";
(-3,0)*+{}="5";
(16,0)*+{E}="6";
{\ar^{\svecv{h}{x}} "0";"2"};
{\ar^{\svech{d}{g^{\prime}}} "2";"6"};
{\ar_{\svecv{-f}{x}} "0";"4"};
{\ar_{\svech{d}{db+g^{\prime}}} "4";"6"};
{\ar^{i}_{\cong} "2";"4"};
\endxy
\]
Thus if we put $g=db+g^{\prime}$, then it gives an $\EE$-triangle $A\overset{\svecv{-f}{x}}{\longrightarrow}D\oplus B\overset{\svech{d}{g}}{\longrightarrow}E\overset{e^{\ast}\delta}{\longrightarrow}$.
Commutativity of $(\ref{diag_PBPO})$ follows from $gx=(db+g^{\prime})x=df+(dh+g^{\prime}x)=df$ and $eg=e(db+g^{\prime})=edb+eg^{\prime}=eg^{\prime}=g$.
\end{proof}

\section{Hearts of twin cotorsion pairs}\label{Section_TwinHeart}

As before, $(\B,\EE,\mathfrak{s})$ denotes an extriangulated category.

\begin{defn}\label{extriangulated}
Let $\U$ and $\V$ be full additive subcategories of $\B$ which are closed under direct summands and isomorphisms. We call $(\U,\V)$ a \emph{cotorsion pair} if it satisfies the following conditions. As below, we always require a cotorsion pair to be {\it complete}, in the sense of \cite{Ho2}.
\begin{itemize}
\item[(a)] $\EE(\U,\V)=0$.

\item[(b)] $(\U,\V)$ is {\it complete}. Namely, for any object $B\in \B$, there exist two conflations
\begin{align*}
V_B\rightarrowtail U_B\twoheadrightarrow B,\quad
B\rightarrowtail V^B\twoheadrightarrow U^B
\end{align*}
satisfying $U_B,U^B\in \U$ and $V_B,V^B\in \V$.
\end{itemize}
A cotorsion pair $(\U,\V)$ is said to be {\it rigid} if it satisfies $\U\subseteq\V$.
\end{defn}

\begin{rem}\label{C3}
For any cotorsion pair $(\U,\V)$ on $\B$, the following holds.
\begin{itemize}
\item[(a)] A morphism $f:A\to B$ factors though $\U$ if and only if $\EE(f,\V)=0$.

\item[(b)] A morphism $f:A\to B$ factors though $\V$ if and only if $\EE(\U,f)=0$.

\item[(c)] $\U$ and $\V$ are closed under extension.

\item[(d)] $\mathcal P \subseteq \U$ and $\mathcal I \subseteq \V$.

\end{itemize}
\end{rem}

\begin{defn}\label{Cctp}
A pair of cotorsion pairs $\STUV$ on $\B$ is called a \emph{twin cotorsion pair} if it satisfies $\EE(\s,\V)=0$, or equivalently $\s\subseteq \U$.

To distinguish from a twin cotorsion pair, we sometimes call cotorsion pair $(\U,\V)$ a {\it single} cotorsion pair. Remark that any cotorsion pair $(\U,\V)$ gives a twin cotorsion pair $((\U,\V),(\U,\V))$. Thus a cotorsion pair can be regarded as a special case of a twin cotorsion pair, satisfying $\s=\U$ and $\T=\V$. In this way, any argument on twin cotorsion pairs can be applied to cotorsion pairs.
\end{defn}

\begin{rem}
If $\B$ is triangulated or exact, then this definition agrees with those in \cite{N2} and \cite{L1}, respectively.
\end{rem}

\begin{defn}\label{C5}
For any twin cotorsion pair $\STUV$, put $\W=\T\cap \U$ and call it the {\it core} of $(\U,\V)$. We define as follows.
\begin{itemize}
\item[(a)] $\B^+=\Cone(\V,\W)$. Namely, $\B^+$ is defined to be the full subcategory of $\B$, consisting of objects $B$ which admits a conflation
$$V_B\rightarrowtail W_B\twoheadrightarrow B$$
where $W_B\in \W$ and $V_B\in \V$. It can be easily shown that we have $\T\subseteq\B^+$.

\item[(b)] $\B^-=\CoCone(\W,\s)$. Namely, $\B^-$ is defined to be the full subcategory of $\B$, consisting of objects $B$ which admits a conflation
$$B\rightarrowtail W^B\twoheadrightarrow S^B$$
where $W^B\in \W$ and $S^B\in \s$. It can be easily shown that we have $\U\subseteq\B^-$.
\end{itemize}
\end{defn}

\begin{defn}\label{C6}
Let $\STUV$ be a twin cotorsion pair on $\B$, and write the quotient of $\B$ by $\W$ as $\uB=\B/\W$. For any morphism $f\in \B(X,Y)$, we denote its image in $ {\uB}(X,Y)$ by $\underline f$.

For any full additive subcategory $\mathcal C$ of $\B$ containing $\W$, similarly we put $\underline{\C}=\C/\W$. This is a full subcategory of $\uB$ consisting of the same objects as $\mathcal C$.

Put $\h=\B^+\cap\B^-$. Since $\h\supseteq \W$, we obtain a full additive subcategory $\underline{\h}\subseteq\uB$, which we call the \emph{heart} of the twin cotorsion pair.
\end{defn}

\begin{rem}\label{RemUBBT}
By using Fact \ref{FactExact}, we can easily confirm $\underline{\B}(\underline{\U},\underline{\B}^+)=0$ and $\underline{\B}(\underline{\B}^-,\underline{\T})=0$.
\end{rem}

Here are some properties of $\B^+$ and $\B^-$. The following is a corollary of Proposition \ref{PBPO}.
\begin{lem}\label{Cepi}
Let $f\in\B(A,B)$ be any morphism.
\begin{enumerate}
\item If $B\in\B^+$, then there exist $W\in\W$ and $w\in\B(W,B)$ which give a deflation $\svech{f}{w}\colon A\oplus W\to B$.
\item Dually if $A\in\B^-$, then there exist $W^{\prime}\in\W$ and $w^{\prime}\in\B(A,W^{\prime})$, which gives an inflation $\svecv{f}{w^{\prime}}\colon A\to B\oplus W^{\prime}$.
\end{enumerate}
\end{lem}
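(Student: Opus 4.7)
The plan is to deduce (1) directly from the opposite version of Proposition \ref{PBPO}, and then obtain (2) by the completely dual argument. The guiding intuition is clear: since $B\in\B^+$ admits a deflation $w\colon W\ta B$ from some $W\in\W$, one should be able to combine this with the arbitrary morphism $f\colon A\to B$ into a single deflation $\svech{f}{w}\colon A\oplus W\ta B$, essentially by ``pulling back'' the given conflation along $f$.

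To make this precise, for (1) I would first use $B\in\B^+=\Cone(\V,\W)$ to fix an $\EE$-triangle
\[ V\overset{x}{\longrightarrow}W\overset{w}{\longrightarrow}B\overset{\delta}{\dashrightarrow} \]
with $V\in\V$ and $W\in\W$. Applying the opposite of Proposition \ref{PBPO} along $f$ amounts to realizing the pull-back extension $f^{\ast}\delta\in\EE(A,V)$ by some $\EE$-triangle $V\to E\to A\overset{f^{\ast}\delta}{\dashrightarrow}$, and then producing an $\EE$-triangle of the form
\[ E\longrightarrow A\oplus W\overset{\svech{f}{w}}{\longrightarrow}B\dashrightarrow, \]
whose deflation part is precisely the morphism we want. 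Concretely, one mimics the construction in the proof of Proposition \ref{PBPO}: start from the dual of Fact \ref{FactNP} to combine $V\to W\to B$ with the split conflation $A\to A\oplus B\to B$ (or equivalently with a chosen realization of $f^{\ast}\delta$), and then absorb the resulting correction term by an automorphism of $A\oplus W$ of the shape $\svech{1}{\ast}$, exactly as was done in the argument for Proposition \ref{PBPO}.

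The only genuine obstacle is checking that the opposite version of Proposition \ref{PBPO} is available, but this is routine: the axioms {\rm (ET3)}$^{\mathrm{op}}$ and {\rm (ET4)}$^{\mathrm{op}}$ are built into the definition of an extriangulated category, and Fact \ref{FactNP} is itself stated together with its dual, so the proof of Proposition \ref{PBPO} dualizes verbatim. Part (2) then follows by the completely dual argument, starting from a conflation $A\rightarrowtail W^{\prime}\twoheadrightarrow S$ with $W^{\prime}\in\W$ and $S\in\s$ coming from $A\in\B^-=\CoCone(\W,\s)$, and applying Proposition \ref{PBPO} directly (rather than its opposite) along $f\colon A\to B$ to produce the inflation $\svecv{f}{w^{\prime}}\colon A\to B\oplus W^{\prime}$.
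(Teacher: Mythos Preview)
Your proposal is correct and follows essentially the same route as the paper: the paper's proof of {\rm (1)} simply cites the dual of Proposition \ref{PBPO} applied to the conflation $V_B\rightarrowtail W_B\overset{w_B}{\twoheadrightarrow}B$ coming from $B\in\B^+$, obtaining a conflation $C\rightarrowtail A\oplus W_B\overset{\svech{f}{-w_B}}{\twoheadrightarrow}B$, and then notes that {\rm (2)} is dual. Your extra discussion of how the dual of Proposition \ref{PBPO} is obtained is accurate but unnecessary here, since the paper already takes the dualizability for granted.
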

\begin{proof}
{\rm (1)} Since $B\in \B^+$, it admits a conflation $V_B\ra W_B\overset{w_B}{\ta}B$ with $V_B\in\V, W_B\in\W$. By the dual of Proposition \ref{PBPO}, we get a conflation $\xymatrix{C \ar@{ >->}[r] &A\oplus W_B \ar@{->>}[r]^-{\svech{f}{-w_B}} &B}$. {\rm (2)} can be shown dually.
\end{proof}

\begin{lem}\label{CP1}
Let $\STUV$ be as before.
\begin{itemize}
\item[(a)] If $\xymatrix{A \ar@{ >->}[r]^{f} &B \ar@{->>}[r]^g &U}$ is a conflation in $\B$ with $U\in \U$, then $A\in \B^-$ implies $B\in \B^-$.

\item[(b)] If $\xymatrix{A \ar@{ >->}[r]^{f} &B \ar@{->>}[r]^g &S}$ is a conflation in $\B$ with $S\in \s$, then $B\in \B^-$ implies $A\in \B^-$. In particular, this shows $\B^-=\CoCone(\U,\s)$.

\end{itemize}
\end{lem}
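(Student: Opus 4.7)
I would establish (b) first, since (a) can be reduced to (b) via a pushout-type construction. For part (b), we are given a conflation $A\ra B\ta S$ with $S\in\s$ and $B\in\B^-$. Choose a witnessing conflation $B\ra W^B\ta S^B$ with $W^B\in\W$ and $S^B\in\s$. These form a composable pair at $B$, so I would apply (ET4) to obtain an object $X$ together with a commutative diagram containing the conflations $A\ra W^B\ta X$ and $S\ra X\ta S^B$. Because $\s$ is closed under extensions (Remark \ref{C3}(c) applied to the cotorsion pair $(\s,\T)$), the second conflation forces $X\in\s$, so the first conflation witnesses $A\in\CoCone(\W,\s)=\B^-$.

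For the ``in particular'' clause, one inclusion $\B^-=\CoCone(\W,\s)\subseteq\CoCone(\U,\s)$ is immediate from $\W\subseteq\U$. Conversely, any $A\in\CoCone(\U,\s)$ sits in a conflation $A\ra U'\ta S'$ with $U'\in\U$ and $S'\in\s$. By Definition \ref{C5}(b) we have $\U\subseteq\B^-$, hence $U'\in\B^-$, and then (b) applied to this conflation yields $A\in\B^-$.

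For part (a), given $A\ra B\ta U$ with $U\in\U$ and $A\in\B^-$, choose a witnessing conflation $A\ra W^A\ta S^A$ with $W^A\in\W$ and $S^A\in\s$. Now the two conflations share the source $A$, so I would apply Fact \ref{FactNP}(2) to them. This produces an object $M$ together with conflations $W^A\ra M\ta U$ and $B\ra M\ta S^A$. Since $W^A,U\in\U$ and $\U$ is extension-closed (Remark \ref{C3}(c)), we obtain $M\in\U$; and $\U\subseteq\B^-$ by Definition \ref{C5}(b), so $M\in\B^-$. Finally, applying (b) to the conflation $B\ra M\ta S^A$, with $M\in\B^-$ and $S^A\in\s$, yields $B\in\B^-$.

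The only nontrivial step is the construction of $M$ in (a): it packages the data of both given conflations into a single middle term whose position inside the cotorsion pair is transparent. Note that we do \emph{not} try to show $M\in\W$ directly (which would require controlling $\T$-membership); instead we only need $M\in\U\subseteq\B^-$, and then let part (b) do the rest of the work. Otherwise, both parts are essentially straightforward applications of the octahedral-type axioms (ET4) and Fact \ref{FactNP}(2) in the extriangulated setting.
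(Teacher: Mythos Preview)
Your proof of (b) is essentially identical to the paper's: both apply {\rm (ET4)} to the composable pair $A\ra B$ and $B\ra W^B$, then use extension-closedness of $\s$.

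Your proof of (a), however, takes a genuinely different route. The paper resolves $B$ by the cotorsion pair $(\s,\T)$ to obtain a conflation $B\ra T^B\ta S^B$ with $T^B\in\T$, builds a morphism of $\EE$-triangles from $A\ra W^A\ta S^A$ to this (using $\EE(S^A,T^B)=0$), and then runs a short diagram chase on $\EE(-,V)$ to conclude $\EE(T^B,\V)=0$, hence $T^B\in\T\cap\U=\W$. So the paper produces a witnessing $\B^-$-resolution of $B$ directly, without invoking (b). You instead use the pushout construction of Fact~\ref{FactNP}(2) to build an object $M$ with $W^A\ra M\ta U$ and $B\ra M\ta S^A$, observe $M\in\U\subseteq\B^-$, and then apply (b). Your argument is correct and arguably cleaner: it avoids the $\EE$-group computation entirely and makes the dependence on (b) explicit. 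The paper's argument, on the other hand, keeps (a) logically independent of (b) and exhibits the $\W$-resolution of $B$ concretely, which is occasionally useful elsewhere.
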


\begin{proof}
(a) Since $A\in \B^-$, it admits an $\EE$-triangle $A\overset{w^A}{\longrightarrow}W^A\to S^A\dashrightarrow$, with $W^A\in \W$ and $S^A\in \s$. By $\EE(\s,\T)=0$, we have a morphism of $\EE$-triangles
$$
\xymatrix{
A \ar[r]^{w^A} \ar[d]_f &{W^A} \ar[r] \ar[d] &{S^A} \ar[d]\ar@{-->}[r]&\\
B \ar[r]_{t^B} &{T^B} \ar[r] &{S^B}\ar@{-->}[r]&}
$$
with $S^B\in\mathcal S$ and $T^B\in\T$.
Thus we obtain a commutative diagram
$$\xymatrix{
&0\ar[d]&\\
&{\EE(T^B,V)} \ar[d]_{(t^B)^{\ast}} \ar[r] &0 \ar[d]\\
0 \ar[r] &{\EE(B,V)} \ar[r]_{f^{\ast}} &{\EE(A,V)}}
$$
for any $V\in\V$, in which the bottom row and the left column are exact. This shows $\EE(T^B,\V)=0$, and thus $T^B\in\T\cap\U=\W$.

(b) Since $B\in \B^-$, there exists a conflation $\xymatrix{B \ar@{ >->}[r]^{w^B} &W^B \ar@{->>}[r] &S^B}$ with $S^B\in\s$ and $W^B\in\W$.
By {\rm (ET4)}, we get a commutative diagram made of conflations as follows.
$$\xymatrix{
A \ar@{=}[d] \ar@{ >->}[r]^f &B \ar@{ >->}[d]^{w^B} \ar@{->>}[r]^g &S \ar@{ >->}[d]\\
A \ar@{ >->}[r]  &{W^B} \ar@{->>}[r] \ar@{->>}[d] &X \ar@{->>}[d]\\
&{S^B} \ar@{=}[r] &{S^B}
}$$
We thus get $X\in \s$ since $\s\subseteq\B$ is closed under extension. This shows $A\in \B^-$.
\end{proof}

Dually, the following holds.
\begin{lem}\label{CP2}
Let $\STUV$ be as before.
\begin{itemize}
\item[(a)] If $\xymatrix{T \ar@{ >->}[r] &A \ar@{->>}[r]^{f} &B}$ is a conflation in $\B$ with $T\in \T$, then $B\in \B^+$ implies $A\in \B^+$.

\item[(b)] If $\xymatrix{V \ar@{ >->}[r] &A \ar@{->>}[r]^{f} &B}$ is a conflation in $\B$ with $V\in \V$, then $A\in \B^+$ implies $B\in \B^+$. In particular, this shows $\B^+=\Cone(\V,\T)$.

\end{itemize}
\end{lem}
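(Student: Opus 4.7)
My plan for both parts is to dualize the proof of Lemma~\ref{CP1}: part (a) will mirror \ref{CP1}(a) with arrows reversed and the roles of $(\U,\V)$ and $(\s,\T)$ swapped, while part (b) will mirror \ref{CP1}(b) using (ET4)$^{\mathrm{op}}$ in place of (ET4). Throughout I will exploit the fact that the twin condition $\EE(\s,\V)=0$ is equivalent both to $\s\subseteq\U$ and to $\V\subseteq\T$, so that $\V$, $\T$, and $\W$ all pair trivially with $\s$ under $\EE$.

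For (a), starting from $T\ra A\overset{f}{\ta}B$ with $T\in\T$, I would first fix two conflations: one $V_B\ra W_B\overset{w_B}{\ta}B$ witnessing $B\in\B^+$, and one $V_A\ra U_A\overset{u_A}{\ta}A$ obtained from completeness of $(\U,\V)$. Because $\EE(U_A,V_B)=0$, the composite $f\circ u_A$ lifts through $w_B$, and (ET3) then promotes this to a morphism of $\EE$-triangles between the two rows. The remaining task is to show $U_A\in\W$, equivalently $\EE(S,U_A)=0$ for every $S\in\s$. Applying $\EE(S,-)$ and using $\EE(S,V_A)=\EE(S,T)=\EE(S,W_B)=0$ (all instances of $\EE(\s,\T)=0$), I would obtain two injections $\EE(S,U_A)\hookrightarrow\EE(S,A)\hookrightarrow\EE(S,B)$ whose composite factors through $\EE(S,W_B)=0$ by commutativity of the morphism of $\EE$-triangles. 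This forces $\EE(S,U_A)=0$, hence $U_A\in\T\cap\U=\W$, so that the conflation $V_A\ra U_A\ta A$ exhibits $A\in\B^+$.

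For (b), starting from $V\ra A\overset{f}{\ta}B$ with $V\in\V$, I would fix a conflation $V^A\ra W^A\overset{w^A}{\ta}A$ witnessing $A\in\B^+$ and then compose the two deflations $W^A\ta A\overset{f}{\ta}B$. Applying (ET4)$^{\mathrm{op}}$ to this composition should produce an object $X$ fitting into $\EE$-triangles $V^A\ra X\ta V$ and $X\ra W^A\ta B$. The first, combined with the extension-closure of $\V$ (Remark~\ref{C3}(c)), yields $X\in\V$; the second then exhibits $B\in\Cone(\V,\W)=\B^+$. For the final identity $\B^+=\Cone(\V,\T)$, the inclusion $\B^+\subseteq\Cone(\V,\T)$ is automatic from $\W\subseteq\T$, while the reverse inclusion follows by applying (b) to any conflation $V\ra T\ta B$ with $V\in\V$ and $T\in\T\subseteq\B^+$.

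The main difficulty I expect is the diagram chase in (a): constructing the morphism of $\EE$-triangles so that its commutativity genuinely forces the composite of two injections to land in a vanishing $\EE$-group requires careful bookkeeping, although it is the exact dual of the chase carried out in \ref{CP1}(a). Part (b) should be essentially formal once the correct $3\times 3$ instance of (ET4)$^{\mathrm{op}}$ has been identified.
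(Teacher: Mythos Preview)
Your proposal is correct and is exactly the dual of the paper's proof of Lemma~\ref{CP1}, which is precisely what the paper intends by writing ``Dually, the following holds.'' One small remark: in part (a) the extension of the lifted square to a full morphism of $\EE$-triangles uses {\rm (ET3)$^{\mathrm{op}}$} rather than {\rm (ET3)}, since you are matching deflations on the right; otherwise every step goes through as you describe.
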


\subsection{Adjoint property}

We fix a twin cotorsion pair $\STUV$.
\begin{lem}\label{LemForRef}
Let $A\overset{a}{\longrightarrow}B\overset{b}{\longrightarrow}C\overset{\mu}{\dashrightarrow}$ and $C\overset{c}{\longrightarrow}D\overset{d}{\longrightarrow}E\overset{\nu}{\dashrightarrow}$ be $\EE$-triangles. The following are equivalent.
\begin{enumerate}
\item There exists $\theta\in\EE(E,B)$ satisfying $b_{\ast}\theta=\nu$.
\item There exists $\tau\in\EE(D,A)$ satisfying $c^{\ast}\tau=\mu$.
\end{enumerate}
\end{lem}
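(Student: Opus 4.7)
The plan is to recognize that this lemma is an ``octahedral pasting'' statement: each side asserts that the two given $\EE$-triangles can be fit into a $3\times 3$ diagram whose third row/column is a new $\EE$-triangle. The correct tool is therefore (ET4) for (1)$\Rightarrow$(2), and (ET4)$^{\mathrm{op}}$ for the converse.

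For (1)$\Rightarrow$(2), I would start from a chosen $\theta\in\EE(E,B)$ with $b_{\ast}\theta=\nu$, and realize it as an $\EE$-triangle $B\overset{b'}{\longrightarrow}Y\overset{e'}{\longrightarrow}E\overset{\theta}{\dashrightarrow}$. The composable pair
$A\overset{a}{\longrightarrow}B\overset{b}{\longrightarrow}C\overset{\mu}{\dashrightarrow}$ and $B\overset{b'}{\longrightarrow}Y\overset{e'}{\longrightarrow}E\overset{\theta}{\dashrightarrow}$
is then exactly the input of (ET4). Its output provides an object $E'$, morphisms $c'\colon C\to E'$ and $d'\colon E'\to E$, an $\EE$-triangle $A\to Y\to E'\overset{\delta''}{\dashrightarrow}$ with $\delta''\in\EE(E',A)$, together with the compatibilities (i)~$C\overset{c'}{\longrightarrow}E'\overset{d'}{\longrightarrow}E$ realizes $b_{\ast}\theta=\nu$, and (ii)~$(c')^{\ast}\delta''=\mu$.

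Next I would identify the new realization of $\nu$ with the given one. Since $C\overset{c'}{\longrightarrow}E'\overset{d'}{\longrightarrow}E$ and $C\overset{c}{\longrightarrow}D\overset{d}{\longrightarrow}E$ both realize $\nu$, Definition \ref{DefSqEquiv} supplies an isomorphism $\iota\colon E'\overset{\cong}{\longrightarrow}D$ with $\iota c'=c$ and $d\iota=d'$. Setting $\tau=(\iota^{-1})^{\ast}\delta''\in\EE(D,A)$ then gives
$c^{\ast}\tau=(\iota^{-1}c)^{\ast}\delta''=(c')^{\ast}\delta''=\mu$,
establishing (2). The direction (2)$\Rightarrow$(1) proceeds completely symmetrically: realize $\tau$ as $A\to X\to D\overset{\tau}{\dashrightarrow}$, apply (ET4)$^{\mathrm{op}}$ to it together with $C\overset{c}{\longrightarrow}D\overset{d}{\longrightarrow}E\overset{\nu}{\dashrightarrow}$, use the dual compatibility together with $c^{\ast}\tau=\mu$ to identify the resulting realization of $\mu$ with the given $A\overset{a}{\longrightarrow}B\overset{b}{\longrightarrow}C$ via Definition \ref{DefSqEquiv}, and read off $\theta\in\EE(E,B)$ with $b_{\ast}\theta=\nu$.

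The one genuinely delicate step is matching the output of (ET4) with the prescribed $\EE$-triangle on $C,D,E$: the same $\EE$-extension $\nu$ has many realizing sequences, so one must transport $\delta''$ along the isomorphism $\iota$ of middle objects with the correct variance. This is pure bookkeeping, but it is the only place where a careless argument can fail, and it is where the proof actually uses that $\mathfrak s$ is defined on equivalence classes (Definition \ref{DefSqEquiv}). Apart from this, everything reduces to an application of (ET4) and its dual.
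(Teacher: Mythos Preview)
Your proof is correct and follows essentially the same approach as the paper: realize $\theta$ as an $\EE$-triangle, apply (ET4) to the composable pair, use the compatibility $(c')^{\ast}\delta''=\mu$ together with the fact that the resulting sequence realizes $b_{\ast}\theta=\nu$, and transport along the isomorphism of realizations to obtain $\tau$. The paper writes only the direction (1)$\Rightarrow$(2) explicitly and leaves (2)$\Rightarrow$(1) implicit by duality, which is exactly what you do via (ET4)$^{\mathrm{op}}$.
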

\begin{proof}
Assume {\rm (1)}, and realize $\theta$ by $\xymatrix{B \ar[r]^{x} &X\ar[r]^{y} &E \ar@{-->}[r]^{\theta} &}$. Then by {\rm (ET4)}, we obtain the following commutative diagram made of $\EE$-triangles,
\[\xymatrix{
A \ar@{=}[d] \ar[r]^{a} &B \ar[d]^x \ar[r]^{b} &C \ar[d]^{{}^{\exists}c^{\prime}} \ar@{-->}[r]^{\mu} &\\
A \ar[r]_{x\circ a}  &X \ar[r] \ar[d]^{y} &{}^{\exists}D^{\prime} \ar[d]^{{}^{\exists}d^{\prime}}\ar@{-->}[r]^{{}^{\exists}\gamma} &\\
&E \ar@{-->}[d]^{\theta} \ar@{=}[r] &E \ar@{-->}[d]^{b_{\ast}\theta}\\
&&
}\]
satisfying $c^{\prime\ast}\gamma=\mu$. Since $b_{\ast}\theta=\nu$, there is an isomorphism $i\in\B(D,D^{\prime})$ satisfying $c^{\prime}=i\circ c$ and $d^{\prime}\circ i=d$. Then $i^{\ast}\gamma\in\EE(D,A)$ satisfies $c^{\ast}(i^{\ast}\gamma)=c^{\prime\ast}\gamma=\mu$.
\end{proof}
\begin{prop}\label{PropForRef}
Let $B\overset{c}{\longrightarrow}D\overset{d}{\longrightarrow}E\overset{\nu}{\dashrightarrow}$ be $\EE$-triangle, and let $V_B\overset{v_B}{\longrightarrow}U_B\overset{u_B}{\longrightarrow}B\overset{\lambda}{\dashrightarrow}$ be an $\EE$-triangle satisfying $U_B\in\U,V_B\in\V$. Then the following are equivalent.
\begin{enumerate}
\item $\EE(D,V)\overset{c^{\ast}}{\longrightarrow}\EE(B,V)$ is surjective for any $V\in\V$.
\item $\EE(D,V_B)\overset{c^{\ast}}{\longrightarrow}\EE(B,V_B)$ is surjective.
\item There exists $\tau\in\EE(D,V_B)$ satisfying $c^{\ast}\tau=\lambda_B$.
\item There exists $\mu\in\EE(E,U_B)$ satisfying $(u_B)_{\ast}\mu=\nu$.
\end{enumerate}
\end{prop}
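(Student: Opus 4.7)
The plan is to establish the cycle $(1)\Rightarrow(2)\Rightarrow(3)\Leftrightarrow(4)\Rightarrow(1)$, using Lemma \ref{LemForRef} for the equivalence in the middle and Fact \ref{FactExact} together with the orthogonality $\EE(\U,\V)=0$ for the return implication $(3)\Rightarrow(1)$.

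The implication $(1)\Rightarrow(2)$ is immediate by specializing to $V=V_B\in\V$, and $(2)\Rightarrow(3)$ is nothing but the definition of surjectivity applied to $\lambda_B\in\EE(B,V_B)$ (where I read the $\lambda$ in the statement as $\lambda_B$). For $(3)\Leftrightarrow(4)$, I would simply invoke Lemma \ref{LemForRef} with first $\EE$-triangle $V_B\overset{v_B}{\to}U_B\overset{u_B}{\to}B\overset{\lambda_B}{\dashrightarrow}$ and second $\EE$-triangle $B\overset{c}{\to}D\overset{d}{\to}E\overset{\nu}{\dashrightarrow}$: under that matching, the two conditions of the lemma become exactly (4) and (3) of the proposition.

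The remaining step is $(3)\Rightarrow(1)$. Fix $V\in\V$ and $\alpha\in\EE(B,V)$; I want to produce $\beta\in\EE(D,V)$ with $c^{\ast}\beta=\alpha$. Applying Fact \ref{FactExact} to $V_B\overset{v_B}{\to}U_B\overset{u_B}{\to}B\overset{\lambda_B}{\dashrightarrow}$ yields an exact sequence
\[
\B(V_B,V)\overset{\lambda_B^{\sharp}}{\longrightarrow}\EE(B,V)\overset{u_B^{\ast}}{\longrightarrow}\EE(U_B,V).
\]
Since $U_B\in\U$ and $V\in\V$, the cotorsion-pair condition gives $\EE(U_B,V)=0$, so $\lambda_B^{\sharp}$ is surjective and we obtain $g\in\B(V_B,V)$ with $g_{\ast}\lambda_B=\alpha$. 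Then $\beta:=g_{\ast}\tau\in\EE(D,V)$, where $\tau$ is provided by $(3)$, satisfies
\[
c^{\ast}\beta=c^{\ast}g_{\ast}\tau=g_{\ast}c^{\ast}\tau=g_{\ast}\lambda_B=\alpha,
\]
by the naturality recalled after Definition \ref{DefExtension}. This closes the loop.

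I do not expect any genuine obstacle: the only substantive ingredient is Lemma \ref{LemForRef} (already at hand), and the rest is a formal exploitation of the defining orthogonality of a cotorsion pair. The one point to be careful about is distinguishing the ambient category $\B$ from the object $B$ appearing in the $\EE$-triangles when invoking Lemma \ref{LemForRef}, so I would rename variables in the invocation if needed to avoid confusion.
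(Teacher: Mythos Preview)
Your proof is correct and matches the paper's argument essentially line for line: the paper also proves $(3)\Leftrightarrow(4)$ via Lemma \ref{LemForRef}, notes $(1)\Rightarrow(2)\Rightarrow(3)$ are obvious, and for $(3)\Rightarrow(1)$ uses the surjectivity of $\lambda_B^{\sharp}\colon\B(V_B,V)\to\EE(B,V)$ (coming from $\EE(U_B,V)=0$) to write an arbitrary extension as $f_{\ast}\lambda_B$ and then pushes forward $\tau$. Your only addition is spelling out explicitly why $\lambda_B^{\sharp}$ is surjective, which the paper leaves implicit.
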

\begin{proof}
$(3)\Leftrightarrow(4)$ follows from Lemma \ref{LemForRef}.
$(1)\Rightarrow(2)\Rightarrow(3)$ is obvious. Let us show $(3)\Rightarrow(1)$.

Take any $V\in\V$ and $\lambda\in\EE(B,V)$. By the exactness of $\B(V_B,V)\overset{\lambda_B^{\sharp}}{\longrightarrow}\EE(B,V)\to0$, there exists $f\in\B(V_B,V)$ satisfying $\lambda=f_{\ast}\lambda_B$. Then $f_{\ast}\tau\in\B(D,V)$ satisfies $c^{\ast}(f_{\ast}\tau)=f_{\ast}\lambda_B=\lambda$.
\end{proof}

\begin{cor}\label{CorPushRefl}
Let
\[
\xymatrix{
A \ar[r]^{x} \ar[d]_{a} &B \ar[r]^{y} \ar[d]^{b} &{C} \ar@{=}[d] \ar@{-->}[r]^{\delta}& \\
A^{\prime} \ar[r]_{x^{\prime}} &B^{\prime} \ar[r]_{y^{\prime}} &C \ar@{-->}[r]_{a_{\ast}\delta}&
}
\]
be a morphism of $\EE$-triangles. If $\EE(B,V)\overset{x^{\ast}}{\longrightarrow}\EE(A,V)$ is surjective for any $V\in\V$, then so is $\EE(B^{\prime},V)\overset{x^{\prime\ast}}{\longrightarrow}\EE(A^{\prime},V)$ for any $V\in\V$.
\end{cor}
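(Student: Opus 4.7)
The plan is to replace the given morphism of $\EE$-triangles with a single ``homotopy-pushout'' $\EE$-triangle supplied by Proposition \ref{PBPO}, and then to read off surjectivity directly from the long exact sequence of Fact \ref{FactExact}.

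Since the conclusion only involves $x^{\prime}$ and $\V$, and makes no reference to $b$, we may freely replace $b$ by the specific morphism $b_{0}\in\B(B,B^{\prime})$ produced by applying Proposition \ref{PBPO} to the top $\EE$-triangle $A\overset{x}{\to}B\overset{y}{\to}C\overset{\delta}{\dashrightarrow}$ together with the map $a\colon A\to A^{\prime}$. This yields a morphism of $\EE$-triangles compatible with the given one, and crucially an $\EE$-triangle
\[ A\overset{\svecv{-a}{x}}{\longrightarrow}A^{\prime}\oplus B\overset{\svech{x^{\prime}}{b_{0}}}{\longrightarrow}B^{\prime}\overset{y^{\prime\ast}\delta}{\dashrightarrow}. \]

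Next, fix $V\in\V$ and $\lambda\in\EE(A^{\prime},V)$; we seek $\tau\in\EE(B^{\prime},V)$ with $x^{\prime\ast}\tau=\lambda$. Applying $\EE(-,V)$ to the $\EE$-triangle above and invoking Fact \ref{FactExact} gives an exact sequence
\[ \EE(B^{\prime},V)\overset{\svech{x^{\prime}}{b_{0}}^{\ast}}{\longrightarrow}\EE(A^{\prime}\oplus B,V)\overset{\svecv{-a}{x}^{\ast}}{\longrightarrow}\EE(A,V), \]
and under the natural decomposition $\EE(A^{\prime}\oplus B,V)\cong\EE(A^{\prime},V)\oplus\EE(B,V)$ the right-hand map is $(\lambda^{\prime},\sigma^{\prime})\mapsto -a^{\ast}\lambda^{\prime}+x^{\ast}\sigma^{\prime}$. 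By hypothesis applied to $a^{\ast}\lambda\in\EE(A,V)$ there exists $\sigma\in\EE(B,V)$ with $x^{\ast}\sigma=a^{\ast}\lambda$, so $(\lambda,\sigma)$ lies in the kernel of $\svecv{-a}{x}^{\ast}$, and by exactness it must equal $\svech{x^{\prime}}{b_{0}}^{\ast}\tau$ for some $\tau\in\EE(B^{\prime},V)$; reading off the first coordinate yields $x^{\prime\ast}\tau=\lambda$.

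The only mild subtlety is legitimising the swap $b\rightsquigarrow b_{0}$; since the conclusion of the corollary does not involve $b$, this replacement is cost-free. Once the homotopy-pushout $\EE$-triangle is in hand, the remaining argument is a direct unwinding of the long exact sequence, so no further obstacle arises.
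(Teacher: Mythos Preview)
Your proof is correct and takes a genuinely different route from the paper's. The paper invokes Proposition \ref{PropForRef}: it resolves $A$ and $A^{\prime}$ by $\EE$-triangles $V_{A}\to U_{A}\overset{u_{A}}{\to}A\dashrightarrow$ and $V_{A^{\prime}}\to U_{A^{\prime}}\overset{u_{A^{\prime}}}{\to}A^{\prime}\dashrightarrow$ with $U_{\bullet}\in\U$, $V_{\bullet}\in\V$, lifts $a$ to $u\colon U_{A}\to U_{A^{\prime}}$, and then uses the equivalence $(1)\Leftrightarrow(4)$ of Proposition \ref{PropForRef} twice: the surjectivity hypothesis produces $\mu\in\EE(C,U_{A})$ with $(u_{A})_{\ast}\mu=\delta$, whence $u_{\ast}\mu$ witnesses condition $(4)$ for the bottom row, giving surjectivity of $x^{\prime\ast}$.

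Your argument bypasses Proposition \ref{PropForRef} (and hence Lemma \ref{LemForRef}) entirely, appealing only to Proposition \ref{PBPO} and the long exact sequence of Fact \ref{FactExact}. This is more elementary: it makes no use of the cotorsion pair resolutions of $A$ and $A^{\prime}$, and indeed shows that the statement holds for an arbitrary class $\V$ of objects, not just the right half of a cotorsion pair. The paper's approach, on the other hand, fits more naturally into the surrounding narrative, since Proposition \ref{PropForRef} is the organizing principle for reflection sequences and is reused repeatedly thereafter. Your observation that the conclusion is independent of $b$, so that one may pass to the $b_{0}$ supplied by Proposition \ref{PBPO}, is exactly right and is the one point that deserved comment.
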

\begin{proof}
Resolve $A$ and $A^{\prime}$ by $\EE$-triangles
\[ V_A\to U_A\overset{u_A}{\longrightarrow}A\dashrightarrow,\quad V_{A^{\prime}}\to U_{A^{\prime}}\overset{u_{A^{\prime}}}{\longrightarrow}A^{\prime}\dashrightarrow \]
satisfying $U_A,U_{A^{\prime}}\in\U$ and $V_A,V_{A^{\prime}}\in\V$. Then $a$ induces a morphism of $\EE$-triangles as follows.
\[
\xymatrix{
V_A \ar[r]^{} \ar[d]_{{}^{\exists}} &U_A \ar[r]^{u_A} \ar[d]^{{}^{\exists}u} &A \ar[d]^{a} \ar@{-->}[r]^{}& \\
V_{A^{\prime}} \ar[r]_{} &U_{A^{\prime}} \ar[r]_{u_{A^{\prime}}} &A^{\prime} \ar@{-->}[r]_{}&
}
\]
By Proposition \ref{PropForRef}, there exists $\mu\in\EE(C,U_A)$ which gives $(u_A)_{\ast}\mu=\delta$. Then $u_{\ast}\mu\in\EE(C,U_{A^{\prime}})$ satisfies
\[ (u_{A^{\prime}})_{\ast}(u_{\ast}\mu)=a_{\ast}(u_A)_{\ast}\mu=a_{\ast}\delta. \]
Again by Proposition \ref{PropForRef}, $\EE(B^{\prime},V)\overset{x^{\prime\ast}}{\longrightarrow}\EE(A^{\prime},V)$ becomes surjective for any $V\in\V$.
\end{proof}

\begin{defn}\label{DefOfRef}
\begin{itemize}
\item[(a)] A conflation with $Z\in\B^+,S\in\s$
\[ \xymatrix{B \ar@{ >->}[r]^{z} &Z \ar@{->>}[r] &S} \]
is called a {\it reflection sequence} for $B$, if $\EE(Z,V)\overset{z^{\ast}}{\longrightarrow}\EE(B,V)$ is surjective $($and thus bijective$)$ for any $V\in\V$.
\item[(b)] Dually, a conflation with $X\in\B^-,V\in\V$
\[ \xymatrix{V \ar@{ >->}[r] &X \ar@{->>}[r]^{x} &B} \]
is called a {\it coreflection sequence} for $B$, if $\EE(S,X)\overset{x_{\ast}}{\longrightarrow}\EE(S,B)$ is surjective for any $S\in\s$.
\end{itemize}
\end{defn}

\begin{prop}\label{PropFor+Adj}
If $B\overset{z}{\ra}Z\ta S$ is a reflection sequence, then the following hold for any $Y\in\B^+$.
\begin{enumerate}
\item $-\circ z\colon\B^+(Z,Y)\to\B(B,Y)$ is surjective.
\item $-\circ \underline{z}\colon\uB^+(Z,Y)\to\uB(B,Y)$ is bijective.
\end{enumerate}
Dually for coreflection sequences.
\end{prop}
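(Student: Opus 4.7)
The plan is as follows. For part (1), given $f\in\B(B,Y)$ with $Y\in\B^+$, I would invoke the exact sequence
\[ \B(Z,Y)\overset{-\circ z}{\longrightarrow}\B(B,Y)\overset{\delta^\sharp}{\longrightarrow}\EE(S,Y) \]
from Fact \ref{FactExact} (where $B\overset{z}{\longrightarrow}Z\overset{s}{\longrightarrow}S\overset{\delta}{\dashrightarrow}$ is the reflection $\EE$-triangle) and reduce the statement to the vanishing $f_{\ast}\delta=0$. Any lift produced in $\B(Z,Y)$ automatically lies in $\B^+(Z,Y)$, since both $Z$ and $Y$ belong to $\B^+$.

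To secure that vanishing I would pick, via completeness of $(\U,\V)$, a resolution $V_B\rightarrowtail U_B\overset{u_B}{\twoheadrightarrow}B\overset{\lambda_B}{\dashrightarrow}$ with $U_B\in\U$ and $V_B\in\V$. The reflection-sequence hypothesis is precisely condition~(1) of Proposition \ref{PropForRef}, so by its equivalence with condition~(4) there exists $\mu\in\EE(S,U_B)$ with $(u_B)_{\ast}\mu=\delta$, whence $f_{\ast}\delta=(fu_B)_{\ast}\mu$. Since $U_B\in\U$ and $Y\in\B^+$, Remark \ref{RemUBBT} factors $fu_B=a\circ b$ through some $W\in\W$, so $f_{\ast}\delta=a_{\ast}b_{\ast}\mu$ is the image of $b_{\ast}\mu\in\EE(S,W)$ under $a_\ast$. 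But $\EE(S,W)\subseteq\EE(\s,\T)=0$, because $\W\subseteq\T$ and $(\s,\T)$ is a cotorsion pair, so $f_{\ast}\delta=0$ as required.

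For part (2), surjectivity is immediate from (1), so injectivity is the point. Suppose $g\in\B(Z,Y)$ satisfies $\underline{gz}=0$, meaning $gz=\alpha\beta$ for some $\beta\in\B(B,W)$, $\alpha\in\B(W,Y)$, $W\in\W$. Since $W\in\W\subseteq\T\subseteq\B^+$, part~(1) applied to $\beta$ yields $\beta'\in\B(Z,W)$ with $\beta'z=\beta$, whence $(g-\alpha\beta')z=0$. By the exact sequence $\B(S,Y)\overset{-\circ s}{\longrightarrow}\B(Z,Y)\overset{-\circ z}{\longrightarrow}\B(B,Y)$ of Fact \ref{FactExact} there is $h\in\B(S,Y)$ with $hs=g-\alpha\beta'$. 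As $S\in\s\subseteq\U$ and $Y\in\B^+$, Remark \ref{RemUBBT} factors $h=\gamma\epsilon$ through some $W'\in\W$, and hence $g=\alpha\beta'+\gamma\epsilon s$ factors through $W\oplus W'\in\W$, giving $\underline{g}=0$.

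The main conceptual point is that extriangulated categories lack higher extensions, so one cannot force $\EE(S,Y)=0$ directly from a long exact sequence of $\EE^{n}$'s; Proposition \ref{PropForRef} is what supplies the auxiliary $\mu$ that sidesteps this issue, and the twin-cotorsion-pair vanishing $\EE(\s,\T)=0$ then kills the relevant image. The dual statement for coreflection sequences follows by dualizing every step.
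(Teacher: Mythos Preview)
Your proof is correct. Part~(2) follows essentially the same structure as the paper's argument; the only cosmetic difference is that the paper routes the factorization of $gz$ through the specific object $W_Y$ coming from a fixed resolution $V_Y\rightarrowtail W_Y\overset{w_Y}{\twoheadrightarrow}Y$ of $Y$, whereas you work with an arbitrary $W\in\W$ and then invoke part~(1) to lift $\beta$ along $z$.

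Part~(1) is where the approaches genuinely diverge. The paper resolves $Y$ rather than $B$: it takes the $\EE$-triangle $V_Y\to W_Y\overset{w_Y}{\to}Y\overset{\lambda_Y}{\dashrightarrow}$, applies the reflection-sequence surjectivity $z^{\ast}\colon\EE(Z,V_Y)\twoheadrightarrow\EE(B,V_Y)$ directly to produce $\beta$ with $z^{\ast}\beta=f^{\ast}\lambda_Y$, and then builds the lift in two stages (first lifting $\beta$ against $\lambda_Y$ using that $z^{\ast}$ is injective on $\EE(-,W_Y)$, then correcting by a morphism through $W_Y$). You instead resolve $B$ and trade the definition of a reflection sequence for its reformulation in Proposition~\ref{PropForRef}, obtaining $\mu\in\EE(S,U_B)$ with $(u_B)_{\ast}\mu=\delta$; the vanishing $f_{\ast}\delta=0$ then drops out in one stroke from $\uB(\U,\B^+)=0$ and $\EE(\s,\W)=0$. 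Your route is shorter and more conceptual, since it reduces the whole claim to a single obstruction computation in $\EE(S,Y)$; the paper's version has the minor advantage of using only the raw definition of a reflection sequence, without appealing to Proposition~\ref{PropForRef}.
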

\begin{proof}
By $Y\in\B^+$, there is an $\EE$-triangle $\xymatrix{V_Y \ar[r]^{v_Y} &W_{Y}\ar[r]^{w_Y} &Y \ar@{-->}[r]^{\lambda_Y} &}$ satisfying $V_Y\in\V_Y$ and $W_Y\in\W_Y$.

{\rm (1)} Let $f\in\B(B,Y)$ be any morphism. Since $z^{\ast}\colon\EE(Z,V_Y)\to\EE(B,V_Y)$ is surjective, there is $\beta\in\EE(Z,V_Y)$ satisfying $z^{\ast}\beta=f^{\ast}\lambda_Y$. Then the exactness of
\begin{eqnarray*}
&\B(Z,Y)\overset{(\lambda_Y)_{\sharp}}{\longrightarrow}\EE(Z,V_Y)\overset{(v_Y)_{\ast}}{\longrightarrow}\EE(Z,W_Y),&\\
&0\to\EE(Z,W_Y)\overset{z^{\ast}}{\longrightarrow}\EE(B,W_Y)&
\end{eqnarray*}
and $z^{\ast}(v_Y)_{\ast}\beta=f^{\ast}(v_Y)_{\ast}\lambda_Y=0$ shows the existence of $g\in\B(Z,Y)$ which gives $g^{\ast}\lambda_Y=\beta$.

By $(\lambda_Y)_{\sharp}(f-gz)=f^{\ast}\lambda_Y-z^{\ast}\beta=0$, we obtain $h\in\B(B,W_Y)$ satisfying $f-gz=w_Y h$. By the exactness of
\begin{equation}\label{PFA}
\B(Z,W_Y)\overset{-\circ z}{\longrightarrow}\B(B,W_Y)\to0,
\end{equation}
we have $i\in\B(Z,W_Y)$ which gives $iz=h$.
Then $g+w_Yi\in\B(Z,W_Y)$ satisfies $f=(g+w_Yi)z$.

{\rm (2)} Suppose $f\in\B^+(Z,Y)$ satisfies $\underline{fz}=0$. This implies that there exists $g\in\B(B,W_Y)$ satisfying $w_Yg=fz$. By the exactness of $(\ref{PFA})$, there is $h\in\B(Z,W_Y)$ which gives $hz=g$. Then by the exactness of
\[ \B(S,Y)\to\B(Z,Y)\overset{-\circ z}{\longrightarrow}\B(B,Y) \]
and the equality $(f-w_Yh)z=0$, it follows that $f-w_Yh$ factors through $S$. Since $\uB(S,Y)=0$ by Remark \ref{RemUBBT}, this means $\underline{f}=\underline{w_Yh}=0$.
\end{proof}

\begin{cor}\label{CorFor+Adj}
By Proposition \ref{PropFor+Adj}, a reflection sequence $B\overset{z}{\ra}Z\ta S$ gives a reflection $(Z,\underline{z})$ of $B$ along the inclusion functor $\uB^+\hookrightarrow\uB$, in the sense of \cite[Definition 3.1]{Bo}.
In particular, $Z\in\B^+$ is uniquely determined by $B$, up to isomorphism in $\uB^+$. Dually for coreflection sequences.
\end{cor}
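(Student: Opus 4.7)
The plan is to observe that the corollary is essentially a direct repackaging of Proposition \ref{PropFor+Adj}(2), together with the standard uniqueness statement for reflections.

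First, I would recall the definition from \cite{Bo}: $(Z,\underline{z})$ is a reflection of $B$ along the inclusion functor $\uB^+\hookrightarrow\uB$ if and only if $Z\in\B^+$ and, for every object $Y\in\B^+$, the map
\[ -\circ\underline{z}\colon\uB^+(Z,Y)\longrightarrow\uB(B,Y) \]
is a bijection. This is nothing other than the assertion of Proposition \ref{PropFor+Adj}(2), so the existence part of the corollary is immediate: since $B\overset{z}{\ra}Z\ta S$ is a reflection sequence we have $Z\in\B^+$ by definition, and the bijectivity of $-\circ\underline{z}$ at every test object $Y\in\B^+$ is Proposition \ref{PropFor+Adj}(2) applied to $Y$.

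For the uniqueness statement, I would invoke the standard categorical argument: if $B\overset{z}{\ra}Z\ta S$ and $B\overset{z^{\prime}}{\ra}Z^{\prime}\ta S^{\prime}$ are two reflection sequences for the same $B$, then applying the bijectivity of Proposition \ref{PropFor+Adj}(2) (to $Y=Z^{\prime}$ and to $Y=Z$) produces unique morphisms $\underline{g}\in\uB^+(Z,Z^{\prime})$ and $\underline{g^{\prime}}\in\uB^+(Z^{\prime},Z)$ satisfying $\underline{g}\,\underline{z}=\underline{z^{\prime}}$ and $\underline{g^{\prime}}\,\underline{z^{\prime}}=\underline{z}$. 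The composite $\underline{g^{\prime}g}$ and the identity $\underline{\id_Z}$ both solve the universal problem $-\circ\underline{z}=\underline{z}$ in $\uB^+(Z,Z)$, so by uniqueness $\underline{g^{\prime}g}=\underline{\id_Z}$; symmetrically $\underline{gg^{\prime}}=\underline{\id_{Z^{\prime}}}$. Hence $\underline{g}$ is an isomorphism in $\uB^+$.

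The dual statement for coreflection sequences is obtained by applying the same argument to the opposite extriangulated category $(\B^{\op},\EE^{\op},\mathfrak{s}^{\op})$, under which reflection sequences for the pair $(\s,\T),(\U,\V)$ correspond to coreflection sequences for the opposite twin cotorsion pair. There is no real obstacle here; the whole corollary is a formal consequence of the bijectivity established in Proposition \ref{PropFor+Adj}(2), and the only "content" is recognising that bijectivity as the defining universal property of a reflection.
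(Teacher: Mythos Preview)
Your proposal is correct and takes essentially the same approach as the paper, which simply says ``This immediately follows from Proposition \ref{PropFor+Adj}.'' You have merely unpacked what that one line means: identifying the bijectivity in Proposition \ref{PropFor+Adj}(2) with the defining universal property of a reflection, and spelling out the standard uniqueness-of-reflections argument that the paper leaves implicit.
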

\begin{proof}
This immediately follows from Proposition \ref{PropFor+Adj}.
\end{proof}

\begin{lem}\label{Lem5B2}
Let $A\in\B$ be any object, and let $B\overset{z}{\ra}Z\ta S$ be a reflection sequence. For any morphism $f\in\B(A,B)$, the following are equivalent.
\begin{enumerate}
\item $zf$ satisfies $\underline{zf}=0$.
\item $f$ factors some object in $\U$.
\end{enumerate}
\end{lem}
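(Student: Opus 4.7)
The plan is to prove both directions by translating each condition into a statement about $\EE$-extensions, and using the reflection property of the sequence $B \overset{z}{\ra} Z \ta S$ to compare the resolutions of $A$, $B$, and $Z$.

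\smallskip
\noindent\textbf{For (2) $\Rightarrow$ (1).} Suppose $f$ factors as $A \overset{h}{\to} U \overset{g}{\to} B$ with $U\in\U$. It suffices to show that $zg\colon U\to Z$ factors through $\W$, for then so does $zf=zgh$. Since $Z\in\B^+$, there is an $\EE$-triangle $V_Z \ra W_Z \overset{w_Z}{\ta} Z \overset{\eta}{\dashrightarrow}$ with $V_Z\in\V$ and $W_Z\in\W$. Applying $\B(U,-)$ and using Fact \ref{FactExact}, I get an exact sequence
\[ \B(U,W_Z)\overset{w_Z\circ-}{\longrightarrow}\B(U,Z)\overset{\eta_\sharp}{\longrightarrow}\EE(U,V_Z). \]
Since $U\in\U$ and $V_Z\in\V$, the right-hand term vanishes by the cotorsion pair axiom $\EE(\U,\V)=0$. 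Hence $zg$ lifts through $w_Z$, giving the desired factorization through $W_Z\in\W$.

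\smallskip
\noindent\textbf{For (1) $\Rightarrow$ (2).} Resolve $B$ by an $\EE$-triangle $V_B \ra U_B \overset{u_B}{\ta} B \overset{\lambda_B}{\dashrightarrow}$ with $U_B\in\U$ and $V_B\in\V$. By Fact \ref{FactExact}, the sequence
\[ \B(A,U_B)\overset{u_B\circ-}{\longrightarrow}\B(A,B)\overset{(\lambda_B)_\sharp}{\longrightarrow}\EE(A,V_B) \]
is exact, so it suffices to check that $f^{\ast}\lambda_B=0$ in $\EE(A,V_B)$. Because $B\overset{z}{\ra}Z\ta S$ is a reflection sequence, Proposition \ref{PropForRef} furnishes a $\tau\in\EE(Z,V_B)$ with $z^{\ast}\tau=\lambda_B$, whence
\[ f^{\ast}\lambda_B=f^{\ast}z^{\ast}\tau=(zf)^{\ast}\tau. \]
The hypothesis $\underline{zf}=0$ means $zf$ factors as $A\overset{\beta}{\to}W\overset{\alpha}{\to}Z$ for some $W\in\W$. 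Therefore $(zf)^{\ast}\tau=\beta^{\ast}(\alpha^{\ast}\tau)$, and $\alpha^{\ast}\tau\in\EE(W,V_B)=0$ since $W\in\W\subseteq\U$ and $V_B\in\V$. Hence $f^{\ast}\lambda_B=0$ and $f$ factors through $U_B\in\U$.

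\smallskip
Neither direction seems to contain a real obstacle: both rely only on the six-term exact sequence of Fact \ref{FactExact}, the equality $\EE(\U,\V)=0$, and the crucial lifting $\lambda_B=z^{\ast}\tau$ supplied by Proposition \ref{PropForRef}. The only point that requires attention is to make sure that the factorization witnessing $\underline{zf}=0$ may be taken to pass through an object of $\W$ (rather than a general representative), which is exactly the definition of the quotient $\uB=\B/\W$.
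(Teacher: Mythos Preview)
Your proof is correct and follows essentially the same approach as the paper. For $(1)\Rightarrow(2)$ your argument is identical to the paper's: resolve $B$, use Proposition~\ref{PropForRef} to lift $\lambda_B$ to $\tau\in\EE(Z,V_B)$, and conclude $f^{\ast}\lambda_B=(zf)^{\ast}\tau=0$ via $\EE(\W,V_B)=0$. For $(2)\Rightarrow(1)$ the paper simply invokes Remark~\ref{RemUBBT} (namely $\uB(\underline{\U},\uB^+)=0$), whereas you unpack that remark explicitly using the $\B^+$-resolution of $Z$; this is the same content written out in full.
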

\begin{proof}
$(2)\Rightarrow (1)$ follows from $\uB(\underline{\U},\uB^+)=0$, as in Remark \ref{RemUBBT}. Let us show the converse.

Let $V_B\overset{v_B}{\longrightarrow}U_B\overset{u_B}{\longrightarrow}B\overset{\lambda_B}{\dashrightarrow}$ be an $\EE$-triangle satisfying $U_B\in\U,V_B\in\V$. By Proposition \ref{PropForRef}, there exists $\tau\in\EE(Z,V_B)$ satisfying $z^{\ast}\tau=\lambda_B$.
Then $\underline{zf}=0$ and $\EE(\W,V_B)=0$ shows $f^{\ast}\lambda_B=(zf)^{\ast}\tau=0$. Thus {\rm (2)} follows from the exactness of $\B(A,U_B)\overset{u_B\circ-}{\longrightarrow}\B(A,B)\overset{(\lambda_B)_{\sharp}}{\longrightarrow}\EE(A,V_B)$.
\end{proof}

The following gives a (co)reflection sequence for each $B\in\B$ (Proposition \ref{Prop+isRef}).

\begin{defn}\label{Cre}
Let $B\in\B$ be any object. We define as follows.
\begin{enumerate}
\item Take two $\EE$-triangles
\begin{align*}
V_B\to U_B\overset{u_B}{\longrightarrow} B\overset{\delta}{\dashrightarrow},\quad
U_B\to T^U\to S^U\dashrightarrow
\end{align*}
where $U_B \in \U$, $V_B\in \V$, $T^U\in \T$ and $S^U\in \s$ which implies $T^U\in\W$. By \cite[(ET4)]{NP}, we get the following commutative diagram.
\begin{equation}\label{CF1}
\xymatrix{
V_B \ar@{=}[d] \ar@{ >->}[r] &U_B \ar@{ >->}[d]^u \ar@{->>}[r]^{u_B} &B \ar@{ >->}[d]^{p_B} \ar@{-->}[r]^{\delta} &\\
V_B \ar@{ >->}[r]  &T^U \ar@{->>}[r]^t \ar@{->>}[d] &B^+ \ar@{->>}[d]\ar@{-->}[r]^{\delta^{\prime\prime}} &\\
&S^U \ar@{=}[r] &S^U
}
\end{equation}
By Definition \ref{C5}, we have $B^+\in \B^+$.
\item Dually, take the following two conflations
\begin{align*}
B\rightarrowtail T^B \twoheadrightarrow S^B,\quad
V_T\rightarrowtail U_T \twoheadrightarrow T^B
\end{align*}
where $U_T\in \U$, $V_T\in \V$, $T^B\in \T$ and $S^B\in \s$. By {\rm (ET4)$^{\mathrm{op}}$}, we get the following commutative diagram
\begin{equation}\label{CF1op}
\xymatrix{
V_T \ar@{ >->}[d] \ar@{=}[r] &{V_T} \ar@{ >->}[d]\\
B^- \ar@{->>}[d]_{m_B} \ar@{ >->}[r] &{U_T} \ar@{->>}[d] \ar@{->>}[r] &{S^B} \ar@{=}[d]\\
B \ar@{ >->}[r] &{T^B} \ar@{->>}[r] &{S^B}}
\end{equation}
in which, $B^-$ belongs to $\B^-$.
\end{enumerate}
\end{defn}

\begin{prop}\label{Prop+isRef}
For any $B\in\B$, the following holds.
\begin{enumerate}
\item[{\rm (a)}] The conflation $B\overset{p_B}{\ra}B^+\ta S^U$ in Definition \ref{Cre} {\rm (1)} is a reflection sequence for $B$.
\item[{\rm (b)}] The conflation $V_T\ra B^-\overset{m_B}{\ta}B$ in Definition \ref{Cre} {\rm (2)} is a coreflection sequence for $B$.
\end{enumerate}
\end{prop}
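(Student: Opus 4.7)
The plan is to verify the three defining conditions of a reflection sequence from Definition \ref{DefOfRef}(a) for the $\EE$-triangle $B\overset{p_B}{\ra}B^+\ta S^U$: namely $B^+\in\B^+$, $S^U\in\s$, and surjectivity of $p_B^{\ast}\colon\EE(B^+,V)\to\EE(B,V)$ for every $V\in\V$.

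Membership $S^U\in\s$ is immediate from the construction. For $B^+\in\B^+=\Cone(\V,\W)$, first I would observe that $T^U$ lies not only in $\T$ (by hypothesis) but also in $\U$: the middle column $U_B\ra T^U\ta S^U$ of (\ref{CF1}) is a conflation whose outer terms lie in $\U$, and $\U$ is extension-closed by Remark \ref{C3}(c). Hence $T^U\in\T\cap\U=\W$, so the middle row $V_B\ra T^U\ta B^+$ displays $B^+$ as an object of $\Cone(\V,\W)=\B^+$.

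The surjectivity is where the work is, and here the crucial observation is that diagram (\ref{CF1}) has been produced by applying {\rm (ET4)} to the pair of $\EE$-triangles $V_B\ra U_B\overset{u_B}{\ta}B\overset{\delta}{\dashrightarrow}$ and $U_B\overset{u}{\ra}T^U\ta S^U\dashrightarrow$. The compatibility clause {\rm (ii)} of {\rm (ET4)} precisely tells us that the $\EE$-extension $\delta''\in\EE(B^+,V_B)$ which realizes the middle row satisfies $p_B^{\ast}\delta''=\delta$. Applying Proposition \ref{PropForRef} to the $\EE$-triangle $B\overset{p_B}{\ra}B^+\ta S^U\dashrightarrow$ in the role of $B\to D\to E$ together with the given resolution $V_B\to U_B\overset{u_B}{\to}B\overset{\delta}{\dashrightarrow}$, the element $\tau:=\delta''$ witnesses condition~{\rm (3)} of that proposition, whence condition~{\rm (1)} yields the desired surjectivity of $\EE(B^+,V)\overset{p_B^{\ast}}{\to}\EE(B,V)$ for every $V\in\V$. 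This completes {\rm (a)}. Part {\rm (b)} is proved dually, by applying {\rm (ET4)}$^{\mathrm{op}}$ to produce diagram (\ref{CF1op}) and invoking the dual half of Proposition \ref{PropForRef}.

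I do not foresee a substantive obstacle: the entire content of the argument is the translation between the compatibilities built into {\rm (ET4)} and the hypothesis of Proposition \ref{PropForRef}. The only place requiring care is checking that $T^U$ genuinely lies in the core $\W$ rather than only in $\T$, which is where the extension-closedness of $\U$ enters; everything else is a direct application of the machinery already developed.
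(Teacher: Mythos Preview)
Your proof is correct and essentially the same as the paper's, which simply cites Corollary~\ref{CorPushRefl} applied to the morphism of $\EE$-triangles given by the middle and right columns of~(\ref{CF1}) (where $\EE(T^U,V)\to\EE(U_B,V)=0$ is trivially surjective). The only difference is that you invoke Proposition~\ref{PropForRef} directly via condition~(3) and the compatibility {\rm (ET4)(ii)}, whereas the paper routes through Corollary~\ref{CorPushRefl}, whose own proof uses condition~(4) of the same proposition; these are equivalent formulations of the same argument.
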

\begin{proof}
{\rm (a) follows from Corollary \ref{CorPushRefl}. Dually for {\rm (b)}.}
\end{proof}

\begin{lem}\label{Ceq}
For any $B\in \B$, the following are equivalent.
\begin{itemize}
\item[(a)] $B\in \U$.

\item[(b)] $B^+\in \W$. Namely, it is isomorphic to $0$ in $\uB^+$.

\item[(c)] $\underline {p_B}=0$ in $\uB$.
\end{itemize}
\end{lem}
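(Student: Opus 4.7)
The plan is to establish the cyclic chain $(a)\Rightarrow(c)\Rightarrow(b)\Rightarrow(a)$, using Lemma~\ref{Lem5B2} and the fact (Proposition~\ref{Prop+isRef}(a)) that $B\overset{p_B}{\ra}B^+\ta S^U$ is a reflection sequence for $B$.

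For $(a)\Rightarrow(c)$, I would apply Lemma~\ref{Lem5B2} with $A=B$ and $f=\id_B$: if $B\in\U$, then $\id_B$ factors trivially through $B\in\U$, and the lemma gives $\underline{p_B}=\underline{p_B\circ\id_B}=0$.

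For $(c)\Rightarrow(b)$, I would write $p_B=g\circ f$ with $f\colon B\to W$, $g\colon W\to B^+$ and $W\in\W$. Since $\W\subseteq\T\subseteq\B^+$, the surjectivity in Proposition~\ref{PropFor+Adj}(1) produces $h\colon B^+\to W$ with $hp_B=f$, so $(gh)\circ p_B=gf=p_B=\id_{B^+}\circ p_B$. The bijectivity in Proposition~\ref{PropFor+Adj}(2) then forces $\underline{gh}=\underline{\id_{B^+}}$ in $\uB^+$; equivalently, $\id_{B^+}-gh$ factors through some $W'\in\W$. Combining the two factorizations shows $\id_{B^+}$ factors through $W\oplus W'\in\W$, so $B^+$ is a direct summand of a $\W$-object. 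Since $\W=\T\cap\U$ inherits closure under summands from $\T$ and $\U$, this gives $B^+\in\W$.

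For $(b)\Rightarrow(a)$, I would exploit the reflection property directly: from $B^+\in\W\subseteq\U$ we get $\EE(B^+,V)=0$ for every $V\in\V$, and the surjectivity of $(p_B)^{\ast}\colon\EE(B^+,V)\to\EE(B,V)$ built into the definition of a reflection sequence then yields $\EE(B,\V)=0$. Applying Remark~\ref{C3}(a) to $\id_B$ shows that $\id_B$ factors through $\U$, which exhibits $B$ as a direct summand of a $\U$-object; closure of $\U$ under summands concludes $B\in\U$. The delicate step is $(c)\Rightarrow(b)$: the factorization of $p_B$ through $\W$ exists a priori only in $\B$, and one needs \emph{both} parts of Proposition~\ref{PropFor+Adj} in order to upgrade it into the structural statement that $B^+$ itself splits off of a $\W$-object.
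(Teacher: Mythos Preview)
Your proof is correct, but you run the cycle in the opposite direction from the paper and consequently work harder than necessary at one step. The paper proves $(a)\Rightarrow(b)\Rightarrow(c)\Rightarrow(a)$: for $(a)\Rightarrow(b)$ it uses Remark~\ref{RemUBBT} together with the reflection isomorphism $\uB^+(B^+,-)\cong\uB(B,-)$ on $\uB^+$ to conclude $\uB^+(B^+,\uB^+)=0$, hence $B^+\in\W$; the implication $(b)\Rightarrow(c)$ is then trivial, since $p_B$ lands in $B^+\in\W$; and $(c)\Rightarrow(a)$ is exactly your use of Lemma~\ref{Lem5B2} applied to $\id_B$ (but in the other direction of that lemma). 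By contrast, your $(c)\Rightarrow(b)$ needs both the surjectivity and the injectivity statements of Proposition~\ref{PropFor+Adj} to manufacture a splitting of $B^+$ off a $\W$-object, and your $(b)\Rightarrow(a)$ unpacks the definition of a reflection sequence rather than invoking Lemma~\ref{Lem5B2}. Both routes are sound; the paper's ordering simply avoids the splitting argument by making $(b)\Rightarrow(c)$ the trivial link.
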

\begin{proof}
{\rm (a)} implies $\uB^+(B^+,\uB^+)\cong\uB(B,\uB^+)=0$ by Remark \ref{RemUBBT}, which shows {\rm (b)}.

$\mathrm{(b)}\Rightarrow\mathrm{(c)}$ is obvious. $\mathrm{(c)}\Rightarrow \mathrm{(a)}$ follows from Lemma \ref{Lem5B2}. Indeed, $\underline{p_B}=0$ implies that $\id_B$ factors through some object in $\U$, which means $B\in\U$ by Remark \ref{C3}.
\end{proof}

\begin{defn}\label{Defsigmas}
$\ \ $
\begin{enumerate}
\item By Corollary \ref{CorFor+Adj} and Proposition \ref{Prop+isRef}, the inclusion functor $i^+\colon\uB\hookrightarrow\uB^+$ has a right adjoint functor $\sigma^+$. In particular, this $\sigma^+$ can be given by the following.
\begin{itemize}
\item For each $B\in\B$, choose a reflection sequence $B\overset{p_B}{\ra}B^+\ta S^U$ as in Definition \ref{Cre} {\rm (1)}, and put $\sigma^+(B)=B^+$. If $B$ itself belongs to $\B^+$, we may choose as $B^+=B$.
\item For any morphism $\underline f\colon B\rightarrow C$, we define $\sigma^+(\underline f)$ as the unique morphism which makes the following diagram commutative.
$$\xymatrix{
B \ar[r]^{\underline f} \ar[d]_{\underline {p_B}} &C \ar[d]^{\underline {p_C}}\\
B^+ \ar@{-->}[r]_{\sigma^+(\underline f)} &C^+.
}
$$
\end{itemize}
\item Dually, the inclusion $i^-\colon\uB^-\hookrightarrow\uB$ has a left adjoint functor $\sigma^-$, defined by using a coreflection sequence as in Definition \ref{Cre} {\rm (2)}.
\end{enumerate}
\end{defn}

\begin{prop}\label{8.2}
The functor $\sigma^+$ has the following properties.
\begin{itemize}
\item[(a)] $\sigma^+$ is an additive functor.
\item[(b)] $\sigma^+|_{\uB^+}=\id_{\uB^+}$.
\item[(c)] For any morphism $f\colon A\rightarrow B$, $\sigma^+(\underline f)=0$ holds in $\uB$ if and only if $f$ factors through $\U$.
\end{itemize}
Dually for $\sigma^-$.
\end{prop}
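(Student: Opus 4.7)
The plan is to reduce each assertion to the universal property of the reflection: by Corollary \ref{CorFor+Adj} and Proposition \ref{Prop+isRef}(a), precomposition with $\underline{p_B}$ induces a bijection $\uB^+(B^+,Y) \cong \uB(B,Y)$ for every $Y \in \B^+$. This uniqueness makes the assignment $\underline{f} \mapsto \sigma^+(\underline{f})$ functorial and additive essentially by formal manipulations, and gives (b) once the natural choice $B^+=B$, $p_B=\id_B$ is made for $B \in \B^+$.

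For (a), I would first verify functoriality: $\sigma^+(\underline{\id_A})$ and $\id_{A^+}$ both satisfy the defining square relation with $\underline{p_A}$, and likewise $\sigma^+(\underline{gf})$ and $\sigma^+(\underline{g})\sigma^+(\underline{f})$ both make the appropriate square commute, so the universal property forces equality. Additivity is then another application of the same uniqueness: for $f_1,f_2 \in \B(A,B)$ the identity
$$\underline{p_B}\,\underline{f_1+f_2} = \underline{p_Bf_1} + \underline{p_Bf_2} = \bigl(\sigma^+(\underline{f_1})+\sigma^+(\underline{f_2})\bigr)\,\underline{p_A}$$
identifies $\sigma^+(\underline{f_1+f_2})$ with $\sigma^+(\underline{f_1})+\sigma^+(\underline{f_2})$.

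Item (b) is then immediate from the freedom in the construction of Definition \ref{Defsigmas}: for $B \in \B^+$ one takes $B^+=B$ and $p_B=\id_B$, after which both $\underline{f}$ and $\sigma^+(\underline{f})$ solve the defining square and therefore coincide.

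The main content lies in (c). For the ``only if'' direction, I would exploit the fact that $B \overset{p_B}{\ra} B^+ \ta S^U$ is a reflection sequence by Proposition \ref{Prop+isRef}(a): the hypothesis $\sigma^+(\underline{f})=0$ combined with the defining commutative square forces $\underline{p_B f}=0$ in $\uB$, and Lemma \ref{Lem5B2} applied to this reflection sequence and $f \in \B(A,B)$ yields that $f$ factors through some object of $\U$. The converse combines part (a) with Lemma \ref{Ceq}: if $f=vu$ with $u\colon A \to U$, $v\colon U \to B$ and $U \in \U$, then Lemma \ref{Ceq} gives $U^+ \in \W$, hence $\sigma^+(U) \cong 0$ in $\uB^+$, whence $\sigma^+(\underline{f})=\sigma^+(\underline{v})\sigma^+(\underline{u})=0$. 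No serious obstacle is anticipated; the only care needed is to apply the universal property in the right direction so that every identified morphism lives in $\uB^+$ where $\sigma^+$ takes its values.
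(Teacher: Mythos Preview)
Your proposal is correct and follows essentially the same approach as the paper. The paper dispatches (a) and (b) in one sentence (``can be concluded easily by definition'') and handles (c) by observing, via the adjoint bijection, that $\sigma^+(\underline f)=0$ is equivalent to $\underline{p_Bf}=0$, after which both directions follow from Lemma~\ref{Lem5B2}; your only variation is that for the ``if'' direction of (c) you route through Lemma~\ref{Ceq} and functoriality rather than the $(2)\Rightarrow(1)$ implication of Lemma~\ref{Lem5B2}, which is an equally valid and essentially equivalent choice.
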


\begin{proof}
(a),(b) can be concluded easily by definition.
By the adjoint property, $\sigma^+(\underline{f})=0$ is equivalent to $\underline{p_Bf}=\sigma^+(\underline{f})\circ\underline{p_A}=0$. Thus {\rm (c)} follows from Lemma \ref{Lem5B2}.
\end{proof}

\begin{lem}\label{C14}
Let
\begin{equation}\label{C14diag}
\xymatrix{
A \ar[r]^{w} \ar[d]_{f} &{W} \ar[r] \ar[d]^g &{S} \ar@{=}[d] \ar@{-->}[r]^{\rho}& \\
B \ar[r]_{c} &{C} \ar[r] &{S} \ar@{-->}[r]_{f_{\ast}\rho}&
}
\end{equation}
be any morphism of $\EE$-triangles satisfying $W\in\W$ and $S\in\s$. Then we have the following.
\begin{enumerate}
\item  For any $X\in\B$,
\[ \B(C,X)\overset{-\circ c}{\longrightarrow}\B(B,X)\overset{\underline{-\circ f}}{\longrightarrow}\uB(A,X) \]
is exact.
\item For any $Y\in\B^+$,
\[ 0\to\uB(C,Y)\overset{-\circ\underline{c}}{\longrightarrow}\uB(B,Y)\overset{-\circ\underline{f}}{\longrightarrow}\uB(A,Y) \]
is exact.
\item $\sigma^+(A)\overset{\sigma^+(\underline{c})}{\longrightarrow}\sigma^+(B)\overset{\sigma^+(\underline{f})}{\longrightarrow}\sigma^+(C)\to0$ is a cokernel sequence in $\uB^+$.
\end{enumerate}
\end{lem}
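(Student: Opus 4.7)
The plan is to prove the three parts in order; part (2) reduces to part (1) plus one genuinely new ingredient using $Y\in\B^+$, and part (3) reduces formally from part (2) by the adjunction.

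For part (1), the key observation is that $\phi\in\B(B,X)$ factors through $c$ iff the obstruction $\phi_\ast(f_\ast\rho)=(\phi f)_\ast\rho\in\EE(S,X)$ vanishes, which is exactly the exactness assertion of Fact \ref{FactExact} applied to the bottom $\EE$-triangle. If $\underline{\phi f}=0$, then $\phi f=\mu\beta$ for some $\beta\colon A\to W_1$, $\mu\colon W_1\to X$, $W_1\in\W$; hence $(\phi f)_\ast\rho=\mu_\ast\beta_\ast\rho$, and $\beta_\ast\rho\in\EE(S,W_1)$ vanishes because $S\in\s$, $W_1\in\W\subseteq\T$, and $\EE(\s,\T)=0$ by the cotorsion pair $(\s,\T)$. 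Thus $\phi=\psi c$ for some $\psi$. The converse inclusion is immediate: $\phi=\psi c$ forces $\phi f=\psi cf=\psi gw$ by the given commutative square, which factors through $W\in\W$.

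For part (2), the exactness at $\uB(B,Y)$ is automatic from part (1) since the factorization $\phi=\psi c$ is a strict equality in $\B$. The genuinely new step is injectivity of $-\circ\underline{c}$. Assume $\underline{\psi c}=0$, so $\psi c=qp$ with $W_0\in\W$, $p\colon B\to W_0$, $q\colon W_0\to Y$. Applying Fact \ref{FactExact} to the bottom $\EE$-triangle and using $\EE(S,W_0)=0$ (again from $\EE(\s,\T)=0$) lifts $p=p'c$ for some $p'\colon C\to W_0$, whence $(\psi-qp')c=0$, and therefore $\psi-qp'=\tau h$ for some $\tau\colon S\to Y$, where $h$ denotes the deflation $C\to S$. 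Now $qp'$ factors through $W_0\in\W$, and since $S\in\s\subseteq\U$ and $Y\in\B^+$, Remark \ref{RemUBBT} gives $\uB(\U,\B^+)=0$, so $\tau$ factors through $\W$ and so does $\tau h$. Thus $\underline{\psi}=0$.

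For part (3), the sequence is to be read with the two labels matched to their sources and targets, i.e.\ as $\sigma^+(A)\xrightarrow{\sigma^+(\underline{f})}\sigma^+(B)\xrightarrow{\sigma^+(\underline{c})}\sigma^+(C)\to 0$. A sequence $X_1\to X_2\to X_3\to 0$ is a cokernel sequence in the additive category $\uB^+$ iff $0\to\uB^+(X_3,Y)\to\uB^+(X_2,Y)\to\uB^+(X_1,Y)$ is exact for every $Y\in\uB^+$. The adjunction of Definition \ref{Defsigmas} supplies the natural isomorphism $\uB^+(\sigma^+(Z),Y)\cong\uB(Z,Y)$ for $Y\in\uB^+$, which converts this test precisely into the exactness of part (2). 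The main obstacle is the injectivity step in part (2): it is the only place where the hypothesis $Y\in\B^+$ is essential, and it must simultaneously exploit $\EE(\s,\T)=0$ to lift the $\W$-factorization through $c$ and the orthogonality $\uB(\U,\B^+)=0$ to kill the residual $\tau h$.
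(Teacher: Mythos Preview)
Your proof is correct and follows essentially the same approach as the paper's: part (1) via the long exact sequence of the bottom $\EE$-triangle together with $\EE(\s,\W)=0$; part (2) by lifting the $\W$-factorization through $c$ using $\EE(S,W_0)=0$ and then killing the residual $S\to Y$ via $\uB(\U,\B^+)=0$; and part (3) by the adjunction $\sigma^+\dashv i^+$. Your observation that the labels in (3) should read $\sigma^+(\underline{f})$ then $\sigma^+(\underline{c})$ is correct and matches what the paper actually proves.
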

\begin{proof}
{\rm (1)} Suppose that a morphism $x\in\B(B,X)$ satisfies $\underline{xf}=0$. This means that $xf$ factors through an object in $\W$. Thus $(xf)_{\ast}\rho=0$ follows from $\EE(S,\W)=0$. By the exactness of
\[ \B(C,X)\overset{-\circ c}{\longrightarrow}\B(B,X)\overset{(f_{\ast}\rho)^{\sharp}}{\longrightarrow}\EE(S,X), \]
we obtain a morphism $x^{\prime}\in\B(C,X)$ satisfying $x^{\prime}c=x$.

{\rm (2)} It suffices to show that $\uB(C,Y)\overset{-\circ\underline{c}}{\longrightarrow}\uB(B,Y)$ is monomorphic. Let $y\in\B(C,Y)$ be any morphism satisfying $\underline{yc}=0$. Then there is a commutative diagram
\[
\xymatrix{
B \ar[r]^{c} \ar[d]_{w_1} &C\ar[d]^y\\
W_0 \ar[r]_{w_2} &Y
}
\]
for some object $W_0\in\W$. By the exactness of $\B(C,W_0)\overset{c^{\ast}}{\longrightarrow}\B(B,W_0)\to0$, there is $w_3\in\B(C,W_0)$ which gives $w_1=w_3c$. Then $y-w_2w_3\in\B(C,Y)$ satisfies $(y-w_2w_3)c=0$, and thus it factors through $S$. Since $\uB(S,Y)=0$, it follows that $\underline{y}=\underline{y-w_2w_3}=0$.

{\rm (3)} Indeed, by the adjoint property of $\sigma^+$ and {\rm (2)},
\[ 0\to\uB^+(\sigma^+(C),Y)\overset{-\circ\sigma^+(\underline{c})}{\longrightarrow}\uB^+(\sigma^+(B),Y)\overset{-\circ\sigma^+(\underline{f})}{\longrightarrow}\uB^+(\sigma^+(A),Y) \]
becomes exact for any $Y\in\B^+$.
\end{proof}

Remark that the existence of a diagram $(\ref{C14diag})$ implies $A\in\B^-$. Conversely, for any morphism $f\colon A\to B$ from $A\in\B^-$, we can construct such a diagram in the following way.
\begin{defn}\label{CMF}
For any morphism $f\in\B(A,B)$ with $A\in\B^-$, define $C_f\in\B$ and $c_f\in\B(B,C_f)$ as follows.
\begin{itemize}
\item Take an $\EE$-triangle $A\overset{w^A}{\longrightarrow}W^A\to S^A\overset{\rho^A}{\dashrightarrow}$ and realize $f_{\ast}\rho^A\in\EE(S^A,B)$ by an $\EE$-triangle $B\overset{c_f}{\longrightarrow}C_f\overset{s}{\longrightarrow}S^A\overset{f_{\ast}\rho^A}{\dashrightarrow}$.
\end{itemize}
This gives a morphism of $\EE$-triangles, as follows. Remark that $C_f$ is unique up to isomorphism in $\B$.
\begin{equation}\label{CF2}
\xymatrix{
A \ar[r]^{w^A} \ar[d]_{f} &{W^A} \ar[r] \ar[d] &{S^A} \ar@{=}[d] \ar@{-->}[r]^{\rho^A}& \\
B \ar[r]_{c_f} &{C_f} \ar[r]_s &{S^A} \ar@{-->}[r]_{f_{\ast}\rho^A}&
}
\end{equation}
Moreover if $B\in \B^-$, then $C_f\in \B^-$ holds by Lemma \ref{CP1}.
\end{defn}

\subsection{The heart is semi-abelian}

\begin{prop}\label{C15}
For any twin cotorsion pair $\STUV$, its heart $\underline \h$ is preabelian. Namely, any morphism in $\underline{\h}$ has a kernel and a cokernel.
\end{prop}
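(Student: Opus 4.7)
The plan is to construct, for any morphism $\underline{f}\colon A\to B$ in $\underline{\h}$ (with $A,B\in\h$), its cokernel via Definition \ref{CMF} followed by the reflection $\sigma^+$, and its kernel by the dual procedure. The central technical inputs are Lemma \ref{C14}(2) together with the adjoint characterization of $\sigma^{\pm}$ from Corollary \ref{CorFor+Adj}; the main obstacle is ensuring that the resulting objects actually lie in $\h=\B^+\cap\B^-$ rather than only in one of the two halves.

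For the cokernel, since $A\in\B^-$, Definition \ref{CMF} produces a morphism of $\EE$-triangles whose bottom row is a conflation $B\overset{c_f}{\ra}C_f\ta S^A$ with $S^A\in\s$. Because $B\in\B^-$ and $S^A\in\s\subseteq\U$, Lemma \ref{CP1}(a) gives $C_f\in\B^-$. Taking the reflection sequence $C_f\overset{p_{C_f}}{\ra}C_f^+\ta S^U$ of Definition \ref{Cre}(1) and applying the same lemma once more (using $S^U\in\s\subseteq\U$) gives $C_f^+\in\B^-$, so that $C_f^+\in\h$. The candidate cokernel is then $\underline{p_{C_f}\circ c_f}\colon B\to C_f^+$. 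To verify the universal property fix $Y\in\h\subseteq\B^+$: Corollary \ref{CorFor+Adj} induces a bijection $\uB(C_f^+,Y)\cong\uB(C_f,Y)$, and Lemma \ref{C14}(2) applied to the diagram of Definition \ref{CMF} gives the exact sequence
\[ 0\to\uB(C_f,Y)\to\uB(B,Y)\to\uB(A,Y). \]
Splicing these yields exactness of $0\to\uB(C_f^+,Y)\to\uB(B,Y)\to\uB(A,Y)$, which is the cokernel property in $\underline{\h}$; vanishing of the composite $(\underline{p_{C_f}\circ c_f})\circ\underline{f}$ is automatic, since $c_f\circ f$ factors through $W^A\in\W$ by the commutative diagram of Definition \ref{CMF}.

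For the kernel, I would run the dual construction. Using $B\in\B^+$, form an $\EE$-triangle $V_B\to W_B\to B\overset{\lambda_B}{\dashrightarrow}$ with $V_B\in\V$, $W_B\in\W$, and realize $f^{\ast}\lambda_B$ by $V_B\to K_f\overset{k_f}{\to}A$. The key observation is that the twin cotorsion pair condition $\EE(\s,\V)=0$ of Definition \ref{Cctp} is equivalent to the inclusion $\V\subseteq\T$, since $\T=\s^{\perp_1}$ follows from completeness of $(\s,\T)$ by a standard retraction argument using the conflation $X\to T^X\to S^X$ together with $\EE(\s,X)=0$. Thus $V_B\in\T$, and Lemma \ref{CP2}(a) applied to $V_B\ra K_f\ta A$ gives $K_f\in\B^+$. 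The coreflection sequence $V_T\ra K_f^-\overset{m_{K_f}}{\ta}K_f$ of Definition \ref{Cre}(2) again has $V_T\in\V\subseteq\T$, so Lemma \ref{CP2}(a) yields $K_f^-\in\B^+$, hence $K_f^-\in\h$. The dual of Lemma \ref{C14}(2) together with the adjoint property of $\sigma^-$ then shows that $\underline{k_f\circ m_{K_f}}\colon K_f^-\to A$ is the kernel of $\underline{f}$ in $\underline{\h}$.

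The hardest step is this symmetric bookkeeping: the cokernel direction uses the inclusion $\s\subseteq\U$ built into the definition of a twin cotorsion pair, while the kernel direction requires its formal dual $\V\subseteq\T$. Without the equivalence of these two conditions the dualization would stall, because Definition \ref{CMF} outputs conflations whose outer terms live in $\s\subseteq\U$, whereas its formal dual outputs conflations whose outer terms live only in $\V$. Once both inclusions are available, iterating Lemmas \ref{CP1} and \ref{CP2} at both the Definition \ref{CMF} stage and the (co)reflection stage guarantees that $C_f^+$ and $K_f^-$ land in $\h$, after which the universal properties follow formally from Lemma \ref{C14} and the adjunctions of $\sigma^{\pm}$.
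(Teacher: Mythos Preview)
Your proof is correct and follows essentially the same route as the paper: construct the cokernel via Definition~\ref{CMF} followed by the reflection $\sigma^+$, use Lemmas~\ref{CP1} and~\ref{CP2} to verify membership in $\h$, and invoke Lemma~\ref{C14} for the universal property (the paper packages your use of Lemma~\ref{C14}(2) plus the adjoint bijection of Proposition~\ref{PropFor+Adj} into the single statement Lemma~\ref{C14}(3)). Your explicit verification that $\V\subseteq\T$ is a worthwhile addition, since the paper simply writes ``dual construction gives the kernel'' without remarking that the dual of the hypothesis $\s\subseteq\U$ is needed and holds.
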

\begin{proof}
We only show how to construct the cokernel. Dual construction gives the kernel. Let $f\in\h(A,B)$ be any morphism. Take $C_f$ and $c_f$ as in Definition \ref{CMF}. Since $B\in\h$, it follows $C_f\in\B^-$. By Proposition \ref{Prop+isRef}, there exists $p=p_{C_f}\colon C_f\rightarrow {C_f}^+$ with ${C_f}^+\in \h$. Lemma \ref{C14} {\rm (3)} shows that $A\overset{\underline{f}}{\longrightarrow}B\overset{\underline{pc_f}}{\longrightarrow}C_f^+\to 0$ is a cokernel sequence in $\underline{\B}^+$. Since all $A,B,C_f^+$ belong to $\underline\h$, this gives a cokernel sequence in $\underline\h$.
\end{proof}

\begin{cor}\label{Ceq3}
For any morphism $f\colon A\rightarrow B$ in $\h$, the following are equivalent.
\begin{itemize}
\item[(a)] $\underline f$ is an epimorphism in $\underline \h$.

\item[(b)] ${C_f}^+\in \W.$

\item[(c)] $C_f\in \U.$
\end{itemize}
\end{cor}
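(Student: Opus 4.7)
The plan is to exploit the explicit cokernel construction already established in the proof of Proposition \ref{C15}. There it is shown that for $f\in\h(A,B)$, the morphism
\[ A\overset{\underline{f}}{\longrightarrow}B\overset{\underline{p\,c_f}}{\longrightarrow}C_f^+\to 0 \]
is a cokernel sequence in $\underline{\h}$, where $c_f\colon B\to C_f$ is the morphism from Definition \ref{CMF} and $p=p_{C_f}\colon C_f\to C_f^+$ is the reflection morphism (which exists because $B\in\h\subseteq\B^-$ forces $C_f\in\B^-$, so $C_f^+\in\h$ by Proposition \ref{Prop+isRef}). Consequently, $\underline{f}$ is epic in $\underline{\h}$ if and only if its cokernel object $C_f^+$ is a zero object of $\underline{\h}$, equivalently $C_f^+\in\W$. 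This gives (a)$\Leftrightarrow$(b) directly.

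For (b)$\Leftrightarrow$(c), I would simply invoke Lemma \ref{Ceq} applied to the object $C_f\in\B$: the equivalences $C_f\in\U \Leftrightarrow C_f^+\in\W$ there are exactly what is needed. Combining the two chains of equivalences yields (a)$\Leftrightarrow$(b)$\Leftrightarrow$(c).

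There is no serious obstacle here; the only subtle point is making sure that the reflection $C_f^+$ is actually available and lives in $\h$, so that speaking of it being zero in $\underline{\h}$ is legitimate. This is guaranteed by $B\in\h\subseteq\B^-$ together with the last sentence of Definition \ref{CMF} (which gives $C_f\in\B^-$) and Proposition \ref{Prop+isRef} (which produces a reflection sequence for $C_f$ with $C_f^+\in\B^+$, hence $C_f^+\in\B^+\cap\B^-=\h$).
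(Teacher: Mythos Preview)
Your proof is correct and follows essentially the same approach as the paper: invoke Proposition \ref{C15} to identify $\underline{p\,c_f}\colon B\to C_f^+$ as the cokernel of $\underline{f}$ in $\underline{\h}$ for the equivalence (a)$\Leftrightarrow$(b), and Lemma \ref{Ceq} for (b)$\Leftrightarrow$(c). The only minor slip is in your final parenthetical, where you pass from $C_f^+\in\B^+$ to $C_f^+\in\h$ without explicitly citing Lemma \ref{CP1}(a) (applied to the reflection sequence $C_f\rightarrowtail C_f^+\twoheadrightarrow S$ with $C_f\in\B^-$ and $S\in\s\subseteq\U$) to get $C_f^+\in\B^-$; but this is exactly what the proof of Proposition \ref{C15} already uses, so there is no real gap.
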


\begin{proof}
The equivalence of (b) and (c) is given by Lemma \ref{Ceq}.\\
By Proposition \ref{C15}, $\underline {pc_f}$ is the cokernel of $\underline f$ in $\underline \h$. The equivalence of (a) and (b) follows immediately by this argument.
\end{proof}

\begin{defn}\label{Csemi} $(${\rm cf.} \cite[Proposition 1]{R}$)$
A preabelian category $\mathcal A$ is called \emph{left semi-abelian} if in any pull-back diagram %\comm{$\langle\langle$Comment 1: I would like to avoid using Greek letters here, since it may be confusing with extensions. $\rangle\rangle$}
$$\xymatrix{
A \ar[r]^{\mathbf{a}} \ar[d]_{\mathbf{b}} &B \ar[d]^{\mathbf{c}}\\
C \ar[r]_{\mathbf{d}} &D}
$$
in $\mathcal A$, the morphism $\mathbf{a}$ is an epimorphism whenever $\mathbf{d}$ is a cokernel morphism. Dually for a \emph{right semi-abelian} category. $\mathcal A$ is called \emph{semi-abelian} if it is both left and right semi-abelian.
\end{defn}

In this section we will prove that the heart $\underline \h$ of a twin cotorsion pair is semi-ableian.
\begin{lem}\label{C4.1}
If morphism $\mathbf{b}\in{\underline \h}(B,C)$ is a cokernel of a morphism $\underline f\in {\underline \h}(A,B)$, then there is a conflation
$B\overset{b^{\prime}}{\rightarrowtail} C^{\prime}\twoheadrightarrow S$
with $S\in\s$ and $C^{\prime}\in\h$, which admits an isomorphism $\mathbf{c}\in\underline\h(C,C^{\prime})$ satisfying $\underline{b^{\prime}}=\mathbf{c}\mathbf{b}$.
\end{lem}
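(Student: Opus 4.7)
The plan is to recycle the explicit cokernel construction from the proof of Proposition \ref{C15} and then splice two conflations together via {\rm (ET4)}. Concretely, given $f\in \h(A,B)$ whose image in $\underline{\h}$ has $\mathbf{b}$ as cokernel, I first form $C_f$ by Definition \ref{CMF}, obtaining a conflation $B\overset{c_f}{\rightarrowtail} C_f\twoheadrightarrow S^A$ with $S^A\in\s$ and (using $B\in\B^-$ and Lemma \ref{CP1}) $C_f\in\B^-$. Next, by Proposition \ref{Prop+isRef}, reflecting $C_f$ to $\B^+$ produces a reflection sequence $C_f\overset{p}{\rightarrowtail} C_f^+\twoheadrightarrow S^U$ with $C_f^+\in\h$ and $S^U\in\s$. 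The proof of Proposition \ref{C15} then identifies $\underline{pc_f}\colon B\to C_f^+$ as a cokernel of $\underline{f}$ in $\underline{\h}$.

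The key step is then to apply {\rm (ET4)} to the inflations $c_f$ and $p$: this yields a commutative diagram containing an $\EE$-triangle $B\overset{pc_f}{\longrightarrow} C_f^+\longrightarrow E\dashrightarrow$ together with an auxiliary $\EE$-triangle $S^A\longrightarrow E\longrightarrow S^U\dashrightarrow$. Since $\s$ is closed under extensions (Remark \ref{C3}(c) applied to the cotorsion pair $(\s,\T)$), we deduce $E\in\s$. Thus $B\overset{pc_f}{\rightarrowtail}C_f^+\twoheadrightarrow E$ is a conflation of the required shape, with $C_f^+\in\h$.

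Finally, since $\underline{\h}$ is preabelian by Proposition \ref{C15}, any two cokernels of the same morphism are related by a unique isomorphism. As both $\mathbf{b}\colon B\to C$ and $\underline{pc_f}\colon B\to C_f^+$ are cokernels of $\underline{f}$, there is a unique isomorphism $\mathbf{c}\in\underline{\h}(C,C_f^+)$ satisfying $\mathbf{c}\mathbf{b}=\underline{pc_f}$. Setting $C'=C_f^+$, $b'=pc_f$ and $S=E$ then gives the lemma.

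The only nontrivial point is verifying that $E\in\s$ after applying {\rm (ET4)}; everything else is bookkeeping. That step is routine provided one remembers Remark \ref{C3}(c) for the cotorsion pair $(\s,\T)$, so I do not anticipate a serious obstacle.
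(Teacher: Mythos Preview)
Your proof is correct and follows essentially the same approach as the paper: both construct the explicit cokernel $\underline{pc_f}$ via Definition \ref{CMF} and Proposition \ref{C15}, apply {\rm (ET4)} to splice the conflations $B\overset{c_f}{\rightarrowtail}C_f\twoheadrightarrow S^A$ and $C_f\overset{p}{\rightarrowtail}C_f^+\twoheadrightarrow S^{\prime}$, and then use the extension-closedness of $\s$ to conclude that the resulting cone lies in $\s$. The only cosmetic difference is that the paper replaces $C$ by $C_f^+$ at the start, whereas you carry the comparison isomorphism $\mathbf{c}$ to the end.
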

\begin{proof}
For a morphism $f\in\h(A,B)$, the cokernel of $\underline{f}$ is given by $\underline{pc_f}$, as in the proof of Proposition \ref{C15}. Therefore, replacing by an isomorphism in $\underline\h$, we may assume $C=C_f$ and $\mathbf{b}=\underline{pc_f}$ from the beginning. Then by its definition, we have a morphism of $\EE$-triangles $(\ref{CF2})$, and a reflection sequence $C_f\ra C_f^+\ta S^{\prime}$ with $S^{\prime}\in\s$.
By {\rm (ET4)}, we obtain the following commutative diagram made of conflations.$$\xymatrix{
B \ar@{=}[d] \ar@{ >->}[r]^{c_f} &{C_f} \ar@{ >->}[d]_{p} \ar@{->>}[r] &{S^A} \ar@{ >->}[d]\\
B \ar@{ >->}[r] &{{C_f}^+} \ar@{->>}[r] \ar@{->>}[d] &{}^{\exists}S \ar@{->>}[d]\\
&S^{\prime} \ar@{=}[r] &S^{\prime}
}
$$
The extension-closedness of $\s\subseteq\B$ shows $S\in\s$.
\end{proof}

\begin{prop}\label{Ceq2}
Let $f\in\h(A,B)$ be any morphism. If $f^{\ast}\colon\EE(B,V)\to\EE(A,V)$ is monomorphic for any $V\in\V$, then $\underline f$ is an epimorphism in $\underline \h$.
\end{prop}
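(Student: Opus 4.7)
The plan is to invoke Corollary \ref{Ceq3} and reduce the statement to proving $C_f\in\U$. For this, the target will be to establish $\EE(C_f,V)=0$ for every $V\in\V$, after which completeness of $(\U,\V)$ finishes the job: splitting the conflation $V_{C_f}\ra U_{C_f}\ta C_f$ (which must split since $\EE(C_f,V_{C_f})=0$) exhibits $C_f$ as a direct summand of $U_{C_f}\in\U$, so $C_f\in\U$ by closure of $\U$ under summands.

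To prove the vanishing, I would fix $V\in\V$ and work from the defining $\EE$-triangle $B\overset{c_f}{\longrightarrow}C_f\overset{s}{\longrightarrow}S^A\overset{f_\ast\rho^A}{\dashrightarrow}$ of Definition \ref{CMF}. Fact \ref{FactExact} gives the exact sequence
\[ \EE(S^A,V)\overset{s^\ast}{\longrightarrow}\EE(C_f,V)\overset{c_f^\ast}{\longrightarrow}\EE(B,V). \]
Since $S^A\in\s\subseteq\U$, the group $\EE(S^A,V)$ vanishes, so $c_f^\ast$ is injective. Hence it suffices to prove $c_f^\ast=0$.

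This is where the hypothesis enters. The morphism of $\EE$-triangles $(\ref{CF2})$ supplies $g\in\B(W^A,C_f)$ with $c_f f=g w^A$, which gives the identity $f^\ast\circ c_f^\ast = w^{A\ast}\circ g^\ast$ of maps $\EE(C_f,V)\to\EE(A,V)$. Because $W^A\in\W\subseteq\U$, $\EE(W^A,V)=0$ and so $g^\ast=0$, whence $f^\ast\circ c_f^\ast=0$. The hypothesis that $f^\ast\colon\EE(B,V)\to\EE(A,V)$ is injective then forces $c_f^\ast=0$, closing the argument. I do not anticipate any substantial obstacle: every ingredient is already in place, with the one subtlety being that the implication ``$\EE(X,\V)=0\Rightarrow X\in\U$'' genuinely uses completeness of the cotorsion pair, not merely the vanishing condition $\EE(\U,\V)=0$.
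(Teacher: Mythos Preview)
Your proof is correct and follows essentially the same approach as the paper's: both arguments use the morphism of $\EE$-triangles $(\ref{CF2})$ together with the hypothesis on $f^{\ast}$ to show that $c_f^{\ast}$ vanishes on $\EE(C_f,V)$. The only minor difference is in the endgame: you prove the stronger statement $\EE(C_f,\V)=0$ and hence $C_f\in\U$, then invoke Corollary~\ref{Ceq3}, whereas the paper works with the single resolution $V\to U\overset{u}{\to}C_f\overset{\lambda}{\dashrightarrow}$, shows $c_f^{\ast}\lambda=0$ so that $c_f$ factors through $U\in\U$, and concludes $\underline{pc_f}=0$ directly from $\uB(\U,\B^+)=0$.
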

\begin{proof}
Let $(\ref{CF2})$ be a morphism of $\EE$-triangles, as in Definition \ref{CMF}. Resolve $C_f$ by an $\EE$-triangle
\[ V\to U\overset{u}{\longrightarrow}C_f\overset{\lambda}{\dashrightarrow}\quad(U\in\U,V\in\V). \]
From $(\ref{CF2})$, we obtain a commutative diagram
$$\xymatrix{
0 \ar[r] &\EE(C_f,V) \ar[d] \ar[r]^{c_f^{\ast}} &\EE(B,V) \ar[d]^{f^{\ast}}\\
&0 \ar[r] &\EE(A,V).
}
$$
where the top row is exact. Since $f^{\ast}$ is monomorphic, it follows that $c_f^{\ast}=0\colon\EE(C_f,V)\to\EE(B,V)$.

Thus by the exactness of
\[ \B(B,U)\overset{u\circ-}{\longrightarrow}\B(B,C_f)\overset{\lambda_{\sharp}}{\longrightarrow}\EE(B,V) \]
and $\lambda_{\sharp}(c_f)=c_f^{\ast}\lambda=0$, there exists $g\in\B(B,U)$ satisfying $c_f=ug$. Since $\uB(U,C_f^+)=0$, it follows that $\underline{pc_f}=\underline{pug}=0$, which means $\mathrm{Cok}\underline{f}\cong0$ in $\underline{\h}$.
\end{proof}

\begin{lem}\label{C4.2}
Suppose that $X\in \B^-$ admits a conflation
$$\xymatrix{X \ar@{ >->}[r]^{x} &B \ar@{->>}[r] &U}$$
with $B\in \h$ and $U\in \U$. Then $\sigma^+(\underline{x})\in\underline\h(X^+,B)$ is an epimorphism in $\underline\h$.
\end{lem}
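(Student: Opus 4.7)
The plan is to identify $\sigma^+(\underline{x})$ with a concrete morphism in $\h(X^+,B)$ and then apply Proposition \ref{Ceq2} to it.

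First I would construct the reflection sequence explicitly. By Definition \ref{Cre}(1) applied to $X$, there is a reflection sequence $X\overset{p_X}{\ra} X^+\ta S^U$ with $S^U\in\s$ (and hence $S^U\in\U$). Since $X\in\B^-$, Lemma \ref{CP1}(a) yields $X^+\in\B^-$, so $X^+\in\B^+\cap\B^-=\h$. Now $B\in\h\subseteq\B^+$, so we may take $B^+=B$ and $p_B=\id_B$. Applying Proposition \ref{PropFor+Adj}(1) to the reflection sequence and the morphism $x\colon X\to B$, there exists $f\in\B(X^+,B)$ with $f\circ p_X=x$. By the bijectivity in Proposition \ref{PropFor+Adj}(2), the class $\underline{f}$ is the unique morphism in $\uB^+(X^+,B)$ with $\underline{f}\circ\underline{p_X}=\underline{x}$, and therefore coincides with $\sigma^+(\underline{x})$ by the defining property in Definition \ref{Defsigmas}(1).

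Next I would verify the hypothesis of Proposition \ref{Ceq2} for $f\in\h(X^+,B)$, namely that $f^{\ast}\colon\EE(B,V)\to\EE(X^+,V)$ is injective for every $V\in\V$. Applying $\EE(-,V)$ to the given conflation $X\overset{x}{\ra}B\ta U$ and using $\EE(U,V)=0$ (since $U\in\U$, $V\in\V$), Fact \ref{FactExact} gives that $x^{\ast}\colon\EE(B,V)\to\EE(X,V)$ is injective. On the other hand, the defining property of a reflection sequence (Definition \ref{DefOfRef}(a)) tells us $p_X^{\ast}\colon\EE(X^+,V)\to\EE(X,V)$ is bijective for every $V\in\V$. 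From $f\circ p_X=x$ we obtain $p_X^{\ast}\circ f^{\ast}=x^{\ast}$; since $x^{\ast}$ is injective and $p_X^{\ast}$ is an isomorphism, $f^{\ast}$ must also be injective.

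Finally, Proposition \ref{Ceq2} applied to $f$ shows that $\underline{f}=\sigma^+(\underline{x})$ is an epimorphism in $\underline{\h}$. The only step that requires any care is verifying that $f$ legitimately represents $\sigma^+(\underline{x})$; everything else is a direct combination of the adjoint machinery set up in Proposition \ref{PropFor+Adj} together with the exactness from Fact \ref{FactExact} and the epimorphism criterion from Proposition \ref{Ceq2}.
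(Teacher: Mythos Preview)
Your proof is correct and follows essentially the same approach as the paper's: both identify $\sigma^+(\underline{x})$ with a morphism $f\in\h(X^+,B)$ satisfying $f\circ p_X=x$, then use the injectivity of $x^{\ast}$ (from $\EE(U,V)=0$) together with the bijectivity of $p_X^{\ast}$ (from the reflection sequence) to conclude that $f^{\ast}$ is injective, and finish by invoking Proposition~\ref{Ceq2}. Your version is slightly more explicit about why $X^+\in\h$ and why $\underline{f}=\sigma^+(\underline{x})$, but the argument is the same.
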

\begin{proof}
Let
\[ X\overset{p_X}{\longrightarrow}X^+\to S\dashrightarrow\quad(S\in\s) \]
be an $\EE$-triangle which gives a reflection sequence. By Proposition \ref{PropFor+Adj}, there exists $y\colon X^+\rightarrow B$ satisfying $yp_X=x$. This gives $\sigma^+(\underline{x})=\underline{y}$. For any $V\in\V$, from the morphism of $\EE$-triangles
\[
\xymatrix{
X \ar[r]^{p_X} \ar@{=}[d] &X^+ \ar[r] \ar[d]^{y} &S \ar[d]_{{}^{\exists}} \ar@{-->}[r]^{}&\\
X \ar[r]_x &B \ar[r]_{} &U\ar@{-->}[r]_{}&
}
\]
we obtain the following commutative diagram
\[
\xymatrix{
0\ar[r]&\EE(B,V) \ar[d]_{y^{\ast}}\ar[r]^{x^{\ast}} &\EE(X,V) \ar@{=}[d]\\
0\ar[r]&\EE(X^+,V) \ar[r]_{p_X^{\ast}} &\EE(X,V)
}
\]
in which rows are exact. Thus $y^{\ast}\colon\EE(B,V)\to\EE(X^+,V)$ is monomorphic. By Proposition \ref{Ceq2}, this means that $\underline{y}$ is epimorphic in $\underline{\h}$.
\end{proof}

\begin{lem}\label{C4.4}
Let
\begin{equation}\label{230_1}
\xymatrix{
A \ar[r]^{\mathbf{a}} \ar[d]_{\mathbf{b}} &B \ar[d]^{\mathbf{c}}\\
C \ar[r]_{\mathbf{d}} &D}
\end{equation}
be a pull-back diagram in $\underline \h$. If there exists an object $X\in \B^-$ and morphisms $x_B\colon X\rightarrow B$, $x_C\colon X\rightarrow C$ which satisfy the following conditions, then $\mathbf{a}$ is an epimorphism in $\underline \h$.
\begin{itemize}
\item[(a)] The following diagram is commutative.
\begin{equation}\label{230_2}
\xymatrix{
X \ar[r]^{\underline {x_B}} \ar[d]_{\underline {x_C}} &B \ar[d]^{\mathbf{c}}\\
C \ar[r]_{\mathbf{d}} &D
}
\end{equation}

\item[(b)] There exists a conflation $\xymatrix{X \ar@{ >->}[r]^{x_B} &B \ar@{->>}[r] &U}$ with $U\in \U$.
\end{itemize}
\end{lem}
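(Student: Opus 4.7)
The plan is to lift $X$ into the heart $\underline\h$ by applying $\sigma^+$, and then use the pull-back property of $(\ref{230_1})$ together with Lemma~\ref{C4.2} to produce $\mathbf{a}$ as a factor of an epimorphism.

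First I would check that $X^+\in\h$, so that $\sigma^+(X)$ actually lies in the heart and can be fed into the pull-back in $\underline\h$. Choose a reflection sequence $X\ra X^+\ta S^U$ with $S^U\in\s$ as in Definition~\ref{Cre}~(1), so $X^+\in\B^+$ by construction. Since $\s\subseteq\U$ and $X\in\B^-$, Lemma~\ref{CP1}~(a) applied to this conflation gives $X^+\in\B^-$ as well, hence $X^+\in\B^+\cap\B^-=\h$.

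Next I would apply $\sigma^+$ to the commutativity in hypothesis~(a). Because $B,C,D\in\B^+$, Proposition~\ref{8.2}~(b) tells us $\sigma^+$ fixes $\mathbf{c}$ and $\mathbf{d}$, so functoriality of $\sigma^+$ turns~(\ref{230_2}) into a commutative square in $\underline\h$
\[
\xymatrix{
X^+ \ar[r]^{\sigma^+(\underline{x_B})} \ar[d]_{\sigma^+(\underline{x_C})} & B \ar[d]^{\mathbf{c}}\\
C \ar[r]_{\mathbf{d}} & D.
}
\]
Since $(\ref{230_1})$ is a pull-back in $\underline\h$ and $X^+\in\underline\h$, its universal property yields a morphism $e\in\underline\h(X^+,A)$ with $\mathbf{a}\circ e=\sigma^+(\underline{x_B})$ (and $\mathbf{b}\circ e=\sigma^+(\underline{x_C})$).

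Finally, hypothesis~(b) is exactly the input of Lemma~\ref{C4.2}, which therefore shows that $\sigma^+(\underline{x_B})\in\underline\h(X^+,B)$ is an epimorphism in $\underline\h$. Because $\mathbf{a}\circ e=\sigma^+(\underline{x_B})$ is epi, the standard formal fact that a composite being epi forces the left factor to be epi implies that $\mathbf{a}$ is itself an epimorphism in $\underline\h$. The main obstacle is the first step: without $X^+\in\h$ one cannot invoke the pull-back property inside $\underline\h$ at all, and checking this requires combining Definition~\ref{Cre} with Lemma~\ref{CP1}~(a) in the right direction; once this is in hand, the remainder is a short diagram chase through Lemma~\ref{C4.2}.
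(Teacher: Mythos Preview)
Your proof is correct and follows essentially the same route as the paper: apply $\sigma^+$ to the square $(\ref{230_2})$ to obtain a commutative square in $\underline\h$, use the pull-back property of $(\ref{230_1})$ to factor $\sigma^+(\underline{x_B})$ through $\mathbf{a}$, and conclude via Lemma~\ref{C4.2}. The only difference is that you make explicit the verification $X^+\in\h$ (via Lemma~\ref{CP1}(a)), which the paper leaves implicit when it asserts the resulting square lies in $\underline\h$.
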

\begin{proof}
By the adjoint property of $\sigma^+$, $(\ref{230_2})$ induces a commutative diagram
\[
\xymatrix{
X^+ \ar[r]^{\sigma^+(\underline {x_B})} \ar[d]_{\sigma^+(\underline {x_C})} &B \ar[d]^{\mathbf{c}}\\
C \ar[r]_{\mathbf{d}} &D
}
\]
in $\underline\h$.
Since $(\ref{230_1})$ is a pull-back, there exists a morphism $\mathbf{e}\colon X^+\rightarrow A$ in $\underline \h$ which makes the following diagram commute.
$$\xymatrix{
X^+ \ar@/^/[drr]^{\sigma^+(\underline {x_B})} \ar@/_/[ddr]_{\sigma^+(\underline {x_C})} \ar[dr]^{\mathbf{e}}\\
&A \ar[r]^{\mathbf{a}} \ar[d]^{\mathbf{b}} &B \ar[d]^{\mathbf{c}}\\
&C \ar[r]_{\mathbf{d}} &D}
$$
Since $\sigma^+(\underline {x_B})$ is epimorphic by Lemma \ref{C4.2}, it follows that $\mathbf{a}$ is also an epimorphism.
\end{proof}

\begin{thm}\label{C4.3}
For any twin cotorsion pair $\STUV$, its heart $\underline \h$ is semi-abelian.
\end{thm}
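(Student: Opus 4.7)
The plan is to establish left semi-abelianness directly via Lemma~\ref{C4.4}, then invoke the obvious duality for right semi-abelianness; combining the two yields semi-abelianness. Consider a pull-back square $(\ref{230_1})$ in $\underline{\h}$ with $\mathbf{d}$ a cokernel morphism. The aim is to construct an object $X\in\B^-$ together with morphisms $x_B,x_C$ satisfying the hypotheses of Lemma~\ref{C4.4}, from which the epimorphy of $\mathbf{a}$ is immediate.

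By Lemma~\ref{C4.1}, after replacing $D$ by an isomorphic object in $\underline{\h}$ if needed, I may assume that $\mathbf{d}=\underline{d}$ for an actual inflation $d\colon C\ra D$ fitting into a conflation $C\overset{d}{\ra}D\overset{s}{\ta}S$ with $S\in\s$, and I lift $\mathbf{c}$ to $c\in\B(B,D)$. Since $B\in\h\subseteq\B^+$, Lemma~\ref{Cepi}(1) supplies $W\in\W$ and $w\colon W\to D$ such that $\svech{c}{w}\colon B\oplus W\ta D$ is a deflation; let $K\ra B\oplus W\overset{\svech{c}{w}}{\ta}D$ be the corresponding conflation. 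Pulling this conflation back along the inflation $d$ (via the dual of Proposition~\ref{PBPO}) produces an $\EE$-triangle $K\ra X\overset{x_C}{\ta}C$ together with a morphism $\svecv{x_B}{x_W}\colon X\to B\oplus W$ satisfying $c\,x_B+w\,x_W=d\,x_C$ in $\B$. As $w\,x_W$ factors through $W\in\W$, this descends to $\underline{c\,x_B}=\underline{d\,x_C}$ in $\underline{\B}$, verifying the commutativity required by $(\ref{230_2})$.

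For the second hypothesis of Lemma~\ref{C4.4}, combining the pullback data above with the conflation $C\ra D\ta S$ by successive applications of {\rm (ET4)} yields a conflation $X\overset{x_B}{\ra}B\ta U$ in which $U$ arises as an extension of $W\in\W\subseteq\U$ and $S\in\s\subseteq\U$; the extension-closedness of $\U$ (Remark~\ref{C3}(c)) gives $U\in\U$. The inclusion $X\in\B^-$ then follows by repeatedly applying Lemma~\ref{CP1} within the resulting octahedral diagram, using that $B\in\h\subseteq\B^-$ and that the intermediate cofibers produced in the manipulation lie in $\U$ or $\s$, where the closure statements of Lemma~\ref{CP1} can be invoked.

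The principal obstacle is this octahedral decomposition: one must arrange the data so that $U$ is transparently built from pieces in $\W$ and $\s$, and so that iterated use of Lemma~\ref{CP1} suffices to conclude $X\in\B^-$. Once these technicalities are settled, Lemma~\ref{C4.4} yields that $\mathbf{a}$ is an epimorphism in $\underline{\h}$, so $\underline{\h}$ is left semi-abelian. Right semi-abelianness follows by the strictly dual construction, invoking the duals of Lemmas~\ref{C4.1}, \ref{C4.2}, and \ref{C4.4}; together these prove that $\underline{\h}$ is semi-abelian.
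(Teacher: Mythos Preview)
Your overall strategy matches the paper's: reduce to Lemma~\ref{C4.4}, use Lemma~\ref{C4.1} to make $\mathbf{d}$ an honest inflation, then use $(\mathrm{ET4})^{\mathrm{op}}$ to produce the object $X$. But there is a genuine gap in the execution.

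First a minor slip: to apply Lemma~\ref{Cepi}(1) to $c\colon B\to D$ you need $D\in\B^+$, not $B\in\B^+$; the conclusion you draw is correct, only the justification is misstated.

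The real problem is your claimed conflation $X\overset{x_B}{\ra}B\ta U$. What the $(\mathrm{ET4})^{\mathrm{op}}$ diagram actually gives you is a conflation
\[
X\overset{\svecv{x_B}{x_W}}{\rightarrowtail}B\oplus W\twoheadrightarrow S,
\]
and there is no mechanism in the axioms that lets you strip off the $W$ summand to obtain an inflation $x_B\colon X\ra B$. In general $x_B$ alone need not be an inflation at all, so ``successive applications of (ET4)'' cannot manufacture the conflation you assert; your octahedral sketch is doing work that the axioms do not support.

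The fix is the move the paper makes and you omit: since $W\in\W$, the object $B\oplus W$ is isomorphic to $B$ in $\underline{\h}$, so you may \emph{replace} $B$ by $B\oplus W$ in the pull-back square before invoking Lemma~\ref{C4.4}. After this replacement the conflation $X\ra B\oplus W\ta S$ (with $S\in\s\subseteq\U$) directly verifies condition~(b), and a single application of Lemma~\ref{CP1}(b) (using $B\oplus W\in\B^-$) gives $X\in\B^-$; no iterated octahedra are needed. With that adjustment your argument becomes exactly the paper's proof.
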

\begin{proof}
By duality, we only show that $\underline \h$ is left semi-abelian. Assume we are given a pull-back diagram
$$\xymatrix{
A \ar[r]^{\mathbf{a}} \ar[d]_{\mathbf{b}} &B \ar[d]^{\mathbf{c}}\\
C \ar[r]_{\mathbf{d}} &D}
$$
in $\underline \h$ where $\mathbf{d}$ is a cokernel. Let us show that $\mathbf{a}$ is an epimorphism.

By Lemma \ref{C4.1}, replacing $D$ by an isomorphism in $\underline\h$ if necessary, we can assume that there exists an inflation $d\colon C\to D$ satisfying $\mathbf{d}=\underline d$, which admits a conflation
$$\xymatrix{C \ar@{ >->}[r]^{d} &D \ar@{->>}[r] &S}$$
with $S\in \s$. Since $D\in \h$, replacing $B$ by an isomorphism as in Lemma \ref{Cepi} (1), we may assume that there exists conflation $B^{\prime}\ra B\overset{c}{\ta}D$ such that $\mathbf{c}=\underline c$. By {\rm (ET4)$^{\op}$}, we get the following commutative diagram made of conflations.

$$\xymatrix{
B^{\prime}\ar@{ >->}[d]\ar@{=}[r]&B^{\prime}\ar@{ >->}[d]&\\
{}^{\exists}X \ar@{ >->}[r]^{{}^{\exists}x_B} \ar@{->>}[d]_{{}^{\exists}x_C} &B \ar@{->>}[d]^c \ar@{->>}[r] &S \ar@{=}[d]\\
C \ar@{ >->}[r]_d &D \ar@{->>}[r] &S}
$$
Then $X\in \B^-$ follows from Lemma \ref{CP1}. Hence $\mathbf{a}$ is epimorphic in $\underline \h$ by Lemma \ref{C4.4}.
\end{proof}

\subsection{Functor to the heart}

\begin{prop}\label{8.6}
There exists an isomorphism of functors from $\uB$ to $\underline \h$
\[ \eta\colon \sigma^+ \circ \sigma^-\overset{\cong}{\Longrightarrow}\sigma^- \circ \sigma^+. \]
\end{prop}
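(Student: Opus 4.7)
The plan is to construct $\eta_B$ pointwise using the universal properties of reflection and coreflection, verify naturality, and then establish invertibility via a $3\times3$ diagram from (ET4) combined with Corollary~\ref{CorPushRefl}.

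For the construction, I first observe that both $\sigma^+\sigma^-(B) = (B^-)^+$ and $\sigma^-\sigma^+(B) = (B^+)^-$ lie in $\h$: since $S^U \in \s \subseteq \U$, Lemma~\ref{CP1}(a) applied to $B^- \ra (B^-)^+ \ta S^U$ gives $(B^-)^+ \in \B^-$, hence in $\h = \B^+ \cap \B^-$; dually $(B^+)^- \in \h$ via Lemma~\ref{CP2}(a). I will then define $\eta_B \colon (B^-)^+ \to (B^+)^-$ as the unique morphism in $\uB^-$ (and hence in $\uh$) characterized in $\uB$ by
\[
\underline{m_{B^+}} \circ \eta_B \circ \underline{p_{B^-}} = \underline{p_B \circ m_B}.
\]
Its existence is obtained in two steps: Proposition~\ref{PropFor+Adj} factors $\underline{p_B \circ m_B}\colon B^- \to B^+$ uniquely through $\underline{p_{B^-}}$ (using $B^+ \in \B^+$) as $\alpha_B \colon (B^-)^+ \to B^+$, and the dual of Proposition~\ref{PropFor+Adj} lifts $\alpha_B$ uniquely through $\underline{m_{B^+}}$ (using $(B^-)^+ \in \B^-$) to $\eta_B$.

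For naturality, given $\underline{f} \colon A \to B$, the equation $\eta_B \circ \sigma^+\sigma^-(\underline{f}) = \sigma^-\sigma^+(\underline{f}) \circ \eta_A$ is verified by composing both sides with $\underline{m_{B^+}}$ on the left and $\underline{p_{A^-}}$ on the right: both sides reduce to $\underline{p_B \circ f \circ m_A}$ via the naturality of $\underline{p_\bullet}$ and $\underline{m_\bullet}$ (Proposition~\ref{8.2}) and the defining relations for $\eta_A, \eta_B$. Uniqueness of the double factorization then yields the naturality in $\uh$.

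The main obstacle is invertibility. I will apply (ET4) to the $\EE$-triangles $V_T \ra B^- \overset{m_B}{\ta} B \dashrightarrow$ and $B^- \overset{p_{B^-}}{\ra} (B^-)^+ \ta S^U \dashrightarrow$, which share $B^-$ in the required position, obtaining $E \in \B$ fitting into conflations $V_T \ra (B^-)^+ \ta E$ (middle row) and $B \ra E \ta S^U$ (right column), together with the column morphism $(m_B, h', \id)$ from middle to right column and the row morphism $(\id, p_{B^-}, d)$ from top to middle row. Lemma~\ref{CP2}(b) gives $E \in \B^+$. Applying Corollary~\ref{CorPushRefl} to the column morphism transports the reflection property of $B^-$ to surjectivity of $\EE(E, V) \to \EE(B, V)$ for $V \in \V$, showing $B \ra E \ta S^U$ is a reflection sequence for $B$; Corollary~\ref{CorFor+Adj} then gives $E \cong B^+$ in $\uB^+$. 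Dually, the coreflection-analog of Corollary~\ref{CorPushRefl} applied to the row morphism transports the coreflection property of $B$ to surjectivity of $\EE(S, (B^-)^+) \to \EE(S, E) \cong \EE(S, B^+)$ for $S \in \s$, exhibiting the middle row (after identifying $E \cong B^+$) as a coreflection sequence $V_T \ra (B^-)^+ \ta B^+$ of $B^+$. Uniqueness of coreflection then yields $(B^-)^+ \cong (B^+)^-$ in $\uh$, and this isomorphism coincides with $\eta_B$ by its defining universal property.
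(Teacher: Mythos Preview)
Your proof is correct and follows the same overall strategy as the paper: define $\eta_B$ via the double factorization of $\underline{p_B m_B}$, then establish invertibility by producing a $3\times 3$ diagram via an (ET4)-type axiom together with Corollary~\ref{CorPushRefl} and its dual. The difference lies in how the diagram is assembled. The paper first lifts the extension $\mu$ along the isomorphism $p_B^{\ast}\colon\EE(B^+,V)\to\EE(B,V)$ to obtain $\theta$, realizes it as $V\to Q\to B^+$, and applies (ET4)$^{\mathrm{op}}$; the resulting object $Q$ is then directly both a reflection of $B^-$ and a coreflection of $B^+$, but verifying the reflection property requires the auxiliary argument with $\sigma$, $t$ and the equation $\nu=t_{\ast}\sigma$ before Corollary~\ref{CorPushRefl} can be invoked. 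Your route applies (ET4) directly to the two given conflations $V_T\ra B^-\ta B$ and $B^-\ra (B^-)^+\ta S^U$; this makes Corollary~\ref{CorPushRefl} (and its dual) apply immediately to the column and row morphisms, with no auxiliary lifts needed. The trade-off is that your pivotal object $E$ is only a reflection of $B$, not literally $B^+$, so you carry along an isomorphism $E\cong B^+$ in $\uB$ at the end; the final identification of the resulting isomorphism with $\eta_B$ then relies on the commutativity $d\circ m_B=h'\circ p_{B^-}$ from the (ET4) diagram together with the counit naturality for $\sigma^-$, which you invoke only implicitly in the last sentence.
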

\begin{proof}
Let $\sigma^+,\sigma^-$ be the functors defined in Definition \ref{Defsigmas}, by using $\EE$-triangles
\[ B\overset{p_B}{\longrightarrow}B^+\to S\overset{\delta}{\dashrightarrow}\quad\text{and}\quad V\to B^-\overset{m_B}{\longrightarrow}B\overset{\mu}{\dashrightarrow}\quad(S\in\s,V\in\V) \]
as in Definition \ref{Cre}. Remark that $\{ \underline{p_B}\}_{B\in\underline{\B}}$ forms a natural transformation $\id_{\uB}\Rightarrow i^+\circ\sigma^+$, which is the unit for the adjoint pair $i^+\dashv\sigma^+$. Dually, $\{\underline{m_B}\}_{B\in\underline{\B}}\colon i^-\circ\sigma^-\Rightarrow\id_{\uB}$ gives the counit of the adjoint pair $\sigma^-\dashv i^-$.

By these adjointness, it can be easily shown that $\{\underline{p_Bm_B}\}_{B\in\underline\B}\colon i^-\circ\sigma^-\Rightarrow i^+\circ\sigma^+$ induces a natural transformation $\eta\colon\sigma^+\sigma^-\Rightarrow\sigma^-\sigma^+$, in which $\eta_B\in\underline{\h}(\sigma^+\sigma^-(B),\sigma^-\sigma^+(B))$ is determined by the commutativity of the following diagram, for each $B\in\B$.
$$\xymatrix{
B^- \ar[r]^{\underline{p_Bm_B}} \ar[d]_{\underline{p_{(B^-)}}} &B^+ &\\
\sigma^+(B^-) \ar[r]_{\eta_B} &\sigma^-(B^+)\ar[u]_{\underline{m_{(B^+)}}} &}
$$
Let us show that $\eta_B$ is an isomorphism for any $B\in\B$. By the uniqueness of (co)reflection, it suffices to show that there exist an object $Q\in\h$ and morphisms $q\in\B(Q,B^+), \ell\in\B(B^-,Q)$, which satisfies the following conditions.
\begin{itemize}
\item $(Q,\underline{\ell})$ is a reflection of $B^-$ along $i^+$,
\item $(Q,\underline{q})$ is a coreflection of $B^+$ along $i^-$
\item the diagram
$$\xymatrix{
B^- \ar[r]^{\underline{p_Bm_B}} \ar[d]_{\underline{\ell}} &B^+ &\\
Q \ar@{=}[r] &Q\ar[u]_{\underline{q}} &}
$$
is commutative.
\end{itemize}

Since $p_B^{\ast}\colon\EE(B^+,V)\overset{p_B^{\ast}}{\longrightarrow}\EE(B,V)$ is an isomorphism, there is $\theta\in\EE(B^+,V)$ which gives $p_B^{\ast}\theta=\mu$. Realize $\theta$ as $V\to Q\overset{q}{\longrightarrow}B^+\overset{\theta}{\dashrightarrow}$. Then by {\rm (ET4)$^{\mathrm{op}}$}, we obtain the following commutative diagram made of $\EE$-triangles
\begin{equation}\label{ET4opforP55}
\xymatrix{
V \ar@{ >->}[d] \ar@{=}[r] &{V} \ar@{ >->}[d]&&\\
{}^{\exists}K \ar@{->>}[d]_{{}^{\exists}k} \ar@{ >->}[r]_{{}^{\exists}\ell} &Q \ar@{->>}[d]^{q} \ar@{->>}[r] &{S} \ar@{=}[d]\ar@{-->}[r]^{{}^{\exists}\nu}&\\
B \ar@{-->}[d]_{p_B^{\ast}\theta}\ar@{ >->}[r]_{p_B} &B^+ \ar@{-->}[d]^{\theta}\ar@{->>}[r] &{S}\ar@{-->}[r]_{\delta}& \\
 & & &}
\end{equation}
satisfying $k_{\ast}\nu=\delta$. Since $p_B^{\ast}\theta=\mu$, there is an isomorphism $B^-\overset{\cong}{\longrightarrow}K$ which makes
\[
\xy
(-16,0)*+{V}="0";
(3,0)*+{}="1";
(0,8)*+{B^-}="2";
(0,-8)*+{K}="4";
(-3,0)*+{}="5";
(16,0)*+{B}="6";
{\ar^{} "0";"2"};
{\ar^{m_B} "2";"6"};
{\ar_{} "0";"4"};
{\ar_{k} "4";"6"};
{\ar_{\cong} "2";"4"};
\endxy
\]
commutative. Thus we may replace $K$ and $k$ by $B^-$ and $m_B$ from the first. By Lemmas \ref{CP1} and \ref{CP2} applied to $(\ref{ET4opforP55})$, we have $Q\in\h$. Resolve $B$ as
\[ V_B\overset{v_B}{\longrightarrow}U_B\overset{u_B}{\longrightarrow}B\overset{\lambda_B}{\dashrightarrow}\quad(U_B\in\U,V_B\in\V). \]
Then by Proposition \ref{PropForRef}, there exists $\sigma\in\EE(S,U_B)$ satisfying $(u_B)_{\ast}\sigma=\delta$.

By $\EE(U_B,V)=0$, there is $t\in\B(U_B,B^-)$ satisfying $m_Bt=u_B$. Then by the exactness of
\[ 0\to\EE(S,B^-)\overset{(m_B)_{\ast}}{\longrightarrow}\EE(S,B) \]
and the equality $(m_B)_{\ast}(\nu-t_{\ast}\sigma)=(m_B)_{\ast}\nu-(u_B)_{\ast}\sigma=\delta-\delta=0$, we obtain $\nu=t_{\ast}\sigma$.
Then we have a morphism of $\EE$-triangles as follows.
\[
\xymatrix{
U_B \ar[r]^{} \ar[d]_t &{}^{\exists}X \ar[r] \ar[d] &S \ar@{=}[d] \ar@{-->}[r]^{\sigma}&\\
B^- \ar[r]_{\ell} &Q \ar[r]_{} &S\ar@{-->}[r]_{\nu}&
}
\]
By Corollary \ref{CorPushRefl}, homomorphism
$\ell_{\ast}\colon\EE(Q,V_0)\to\EE(B^-,V_0)$ becomes surjective for any $V_0\in\V$, which means that $B^-\overset{\ell}{\ra}Q\ta S$ is a reflection sequence. Dually, we can show that $V\ra Q\overset{q}{\ta}B^+$ is a coreflection sequence. This completes the proof.
\end{proof}

\begin{defn}
Let $H\colon \B\rightarrow \underline \h$ denote the functor $\sigma^- \circ \sigma^+ \circ \pi$, where $\pi\colon \B\rightarrow \uB$ is the canonical quotient functor.  By Proposition \ref{8.6}, there is a natural isomorphism of functors
$$H=\sigma^- \circ \sigma^+ \circ \pi\cong\sigma^+ \circ \sigma^- \circ \pi.$$
\end{defn}

The following are the properties of $H$ which will be used in section \ref{section_3.2}.
\begin{rem}\label{Rem_HUVzero}
By Lemma \ref{Ceq} and its dual, we have $H(\U)=H(\T)=0$.
\end{rem}

\begin{prop}\label{PropForCoh}
Let
\begin{equation}\label{ToShowKer}
\xymatrix{
V \ar[r]^{f} \ar@{=}[d] &A \ar[r]^{a} \ar[d]^g &B \ar[d]^b \ar@{-->}[r]^{}&\\
V \ar[r]_v &U \ar[r]_{u} &C\ar@{-->}[r]_{}&
}
\end{equation}
be a morphism of $\EE$-triangles. Then
\begin{equation}\label{KerSeq}
0\to H(A)\overset{H(a)}{\longrightarrow}H(B)\overset{H(b)}{\longrightarrow}H(C)
\end{equation}
is a kernel sequence in $\underline{\h}$.
\end{prop}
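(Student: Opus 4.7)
The plan is to reduce the bottom $\EE$-triangle $V\to U\to C$ to one whose middle term lies in $\W$, then apply the dual of Lemma \ref{C14}(3), and finally transport the resulting kernel sequence through $\sigma^+$ into the heart. To begin, I would dispatch the easy composition-vanishing: the given morphism of $\EE$-triangles yields $ba=ug$, and since $U\in\U$ implies $H(U)=0$ by Remark \ref{Rem_HUVzero}, we have $H(b)\circ H(a)=H(u)\circ H(g)=0$.

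For the reduction, because $U\in\U\subseteq\B^-$, pick an $\EE$-triangle $U\overset{u'}{\longrightarrow}W\to S^U\dashrightarrow$ with $W\in\W$ and $S^U\in\s$, and apply {\rm (ET4)} to the composable pair $V\overset{v}{\longrightarrow}U\overset{u'}{\longrightarrow}W$. This produces an octahedron whose right column $C\overset{p_C}{\longrightarrow}C^+\to S^U\dashrightarrow$ is a conflation with $S^U\in\s$ and $C^+\in\B^+$ (by Lemma \ref{CP2}), and whose middle row is a conflation $V\to W\to C^+$ with $V\in\V$ and $W\in\W$. Applying Proposition \ref{PropForRef} to the $\U$-$\V$-resolution $V\to U\to C$ of $C$ shows that this right column is in fact a reflection sequence, so $C^+\cong\sigma^+(C)$. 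The composites $u'g\colon A\to W$ and $p_Cb\colon B\to C^+$ then fit into a new morphism of $\EE$-triangles with the same top row $V\to A\to B$ and improved bottom row $V\to W\to C^+$.

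Applying the dual of Lemma \ref{C14}(3) to this reduced diagram yields $0\to\sigma^-(A)\to\sigma^-(B)\to\sigma^-(C^+)$ as a kernel sequence in $\uB^-$. Since $i^-\colon\uB^-\hookrightarrow\uB$ and $\sigma^+\colon\uB\to\uB^+$ are both right adjoints by Definition \ref{Defsigmas}, each preserves kernels; applying $\sigma^+$ therefore gives a kernel sequence $0\to\sigma^+\sigma^-(A)\to\sigma^+\sigma^-(B)\to\sigma^+\sigma^-(C^+)$ in $\uB^+$. By Proposition \ref{8.6} we have $\sigma^+\sigma^-\cong H$, and since $\sigma^+(C^+)=C^+$ we get $\sigma^+\sigma^-(C^+)\cong H(C)$, producing the desired kernel sequence $0\to H(A)\to H(B)\to H(C)$ in $\uB^+$. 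As all three objects lie in the full subcategory $\underline{\h}\subseteq\uB^+$, this is automatically the required kernel sequence in $\underline{\h}$.

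The main obstacle is the dual of Lemma \ref{C14}(3), which is not explicit in the excerpt and must be proved since the twin cotorsion pair is not self-dual. Concretely, one reproves the analogue for $\sigma^-$ by dualizing the proof of Lemma \ref{C14}: the reflection sequences and the orthogonality $\underline{\B}(\underline{\U},\uB^+)=0$ are replaced by coreflection sequences (Definition \ref{Cre}(2)) and the orthogonality $\underline{\B}(\uB^-,\underline{\T})=0$ from Remark \ref{RemUBBT}, and the key input (dual to Lemma \ref{C14}(2)) is the left-exactness of $\uB(Y,-)$ on the new sequence for each $Y\in\B^-$, which comes from the dual of Proposition \ref{PropFor+Adj}.
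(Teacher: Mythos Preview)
Your first reduction (replacing $U$ by $W\in\W$ and $C$ by $C^+\in\B^+$) matches the paper's first step exactly, and your invocation of the dual of Lemma~\ref{C14}(3) to obtain a kernel sequence $0\to\sigma^-(A)\to\sigma^-(B)\to\sigma^-(C^+)$ in $\uB^-$ is also fine. The gap is in the next step: you claim that $\sigma^+\colon\uB\to\uB^+$ is a right adjoint and therefore preserves kernels. This is false. The functor $\sigma^+$ is a \emph{reflector}, hence the \emph{left} adjoint to the inclusion $i^+\colon\uB^+\hookrightarrow\uB$; this is exactly what Proposition~\ref{PropFor+Adj} expresses, and it is how $\sigma^+$ is used in the proof of Lemma~\ref{C14}(3) to produce a \emph{cokernel} sequence from covariant left-exactness. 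Likewise $i^-$ is a left adjoint (its right adjoint is the coreflector $\sigma^-$). So the composite $\sigma^+\circ i^-$ is a left adjoint and there is no reason for it to preserve kernels. The wording in Definition~\ref{Defsigmas} is admittedly misleading on this point, but the unit/counit identifications in the proof of Proposition~\ref{8.6} and the argument for Lemma~\ref{C14}(3) make the correct direction of the adjunctions unambiguous.

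This is precisely why the paper performs a \emph{second} reduction before applying the dual of Lemma~\ref{C14}(3): it first replaces $A$ by $A^+$ via a reflection sequence and pushes $B$ to some $B'\in\B^+$ via {\rm (ET4)}, so that in the end all three objects $A,B,C$ lie in $\B^+$. Then the kernel sequence $0\to\sigma^-(A)\to\sigma^-(B)\to\sigma^-(C)$ produced in $\uB^-$ already has all its terms in $\underline{\h}$ (since $\sigma^-(\B^+)\subseteq\h$), and no further application of $\sigma^+$ is needed. Your argument can be repaired by inserting this second reduction; once you do that, the rest of your outline coincides with the paper's proof.
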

\begin{proof}
\noindent [Reduction to the case where $U\in\W$ and $C\in\B^+$]
Resolve $U$ by an $\EE$-triangle
\[ U\to W\to S\dashrightarrow\quad(S\in\s,W\in\T\cap\U=\W). \]
Then by {\rm (ET4)}, we have the following diagram made of $\EE$-triangles.
\begin{equation}\label{ET4ForC}
\xymatrix{
V \ar@{=}[d] \ar[r]^{} &U \ar[d] \ar[r] &C \ar[d]^{c} \ar@{-->}[r]^{} &\\
V \ar[r]_{}  &W \ar[r] \ar[d]^{} &{}^{\exists}C^{\prime} \ar[d]^{}\ar@{-->}[r]^{} &\\
&S \ar@{-->}[d]^{} \ar@{=}[r] &S \ar@{-->}[d]^{}\\
&&
}
\end{equation}
Then $C^{\prime}$ belongs to $\Cone(\V,\W)=\B^+$. By Corollary \ref{CorPushRefl}, $C\overset{c}{\ra}C^{\prime}\ta S$ is a reflection sequence. Especially, $H(c)$ becomes an isomorphism. Composing the upper half of $(\ref{ET4ForC})$ with $(\ref{ToShowKer})$, we obtain the following morphism of $\EE$-triangles.
\[
\xymatrix{
V \ar[r]^{f} \ar@{=}[d] &A \ar[r]^{a} \ar[d] &B \ar[d]^{cb} \ar@{-->}[r]^{}&\\
V \ar[r] &W \ar[r]_{} &C^{\prime}\ar@{-->}[r]_{}&
}
\]
Since $H(c)$ is an isomorphism, we may replace $(\ref{ToShowKer})$ by the above morphism. By this replacement, we may assume $U\in\W$ and $C\in\B^+$ from the beginning.

\medskip

\noindent [Reduction to the case where $U\in\W$ and $A,B,C\in\B^+$]
By the previous step, we may assume $U\in\W$ and $C\in\B^+$.
Take an $\EE$-triangle
\[ A\overset{p_A}{\longrightarrow}A^+\to S_A\dashrightarrow \quad(A^+\in\B^+,S_A\in\s) \]
which gives a reflection sequence $A\overset{p_A}{\ra}A^+\ta S_A$.
By {\rm (ET4)}, we obtain the following commutative diagram made of $\EE$-triangles.
\[
\xymatrix{
V \ar@{=}[d] \ar[r]^{f} &A \ar[d] \ar[r]^{a} &B \ar[d]^{p} \ar@{-->}[r]^{} &\\
V \ar[r]_{p_Af}  &A^+ \ar[r] \ar[d]^{} &{}^{\exists}B^{\prime} \ar[d]^{}\ar@{-->}[r]^{{}^{\exists}\theta} &\\
&S_A \ar@{-->}[d]^{} \ar@{=}[r] &S_A \ar@{-->}[d]^{}\\
&&
}
\]
Then Lemma \ref{CP2} {\rm (2)} shows $B^{\prime}\in\B^+$. Thus by Corollary \ref{CorPushRefl},
\[ B\overset{p}{\longrightarrow}B^{\prime}\to S_A \]
becomes a reflection sequence.
Since $U\in\W\subseteq\B^+$, the sequence $\B^+(A^+,U)\to\B(A,U)\to0$ is exact, and thus there is a morphism $q\in\B^+(A^+,U)$ satisfying $g=qp_A$. By {\rm (ET3)}, we obtain a morphism of $\EE$-triangles as follows.
\[
\xymatrix{
V \ar[r]^{p_Af} \ar@{=}[d] &A^+ \ar[r]^{} \ar[d]^{q} &B^{\prime} \ar[d]^{{}^{\exists}b^{\prime}} \ar@{-->}[r]^{\theta}&\\
V \ar[r]_v &U \ar[r]_{u} &C\ar@{-->}[r]_{}&
}
\]
Since $H(p_A)$ and $H(p)$ are isomorphisms, we can replace $(\ref{ToShowKer})$ by this diagram.

\medskip

By the above arguments, we may assume $U\in\W$ and $A,B,C\in\B^+$ from the beginning. Then the dual of Lemma \ref{C14} {\rm (3)} shows that
\[ 0\to\sigma^-(A)\overset{\sigma^-(\underline{a})}{\longrightarrow}\sigma^-(B)\overset{\sigma^-(\underline{b})}{\longrightarrow}\sigma^-(C) \]
is a kernel sequence in $\underline\B^-$. Since $A,B,C\in\B^+$, this means that  $(\ref{KerSeq})$ is a kernel sequence in $\underline\h$.
\end{proof}

\begin{cor}\label{CorForCoh}
Let $A\overset{a}{\ra}B\ta S$ be any conflation, with $S\in\s$. Then $H(a)$ is an epimorphism in $\underline{\h}$.
\end{cor}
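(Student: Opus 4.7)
Given the $\mathbb{E}$-triangle $A\overset{a}{\rightarrowtail}B\overset{s}{\twoheadrightarrow}S\overset{\delta}{\dashrightarrow}$ with $S\in\mathcal{S}$, the strategy is to reduce to an application of Proposition \ref{PropForCoh} by replacing the ``non-resolvable'' third term $S$ with a better-behaved object through a Fact \ref{FactNP}(1) construction.

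Since $\mathcal{S}\subseteq\mathcal{T}\subseteq\mathcal{B}^+=\operatorname{Cone}(\mathcal{V},\mathcal{W})$, we first fix a conflation $V_S\rightarrowtail W_S\twoheadrightarrow S$ with $V_S\in\mathcal{V}$ and $W_S\in\mathcal{W}$. Applying Fact \ref{FactNP}(1) to this and to the given $\mathbb{E}$-triangle (both ending at $S$) produces an object $M$ and $\mathbb{E}$-triangles
\[ V_S\rightarrowtail M\overset{e_1}{\twoheadrightarrow}B\quad\text{and}\quad A\overset{m_2}{\rightarrowtail}M\overset{e_2}{\twoheadrightarrow}W_S, \]
with the factorization $a=e_1\circ m_2$. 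The resulting morphism of $\mathbb{E}$-triangles
\[
\xymatrix{
V_S\ar@{=}[d]\ar[r]&M\ar[d]^{e_2}\ar[r]^{e_1}&B\ar[d]^{s}\ar@{-->}[r]&\\
V_S\ar[r]&W_S\ar[r]&S\ar@{-->}[r]&
}
\]
falls into the scope of Proposition \ref{PropForCoh} (note that $W_S\in\mathcal{W}\subseteq\mathcal{B}^-$ supplies the resolution required in the proof of \ref{PropForCoh}). This yields the kernel sequence $0\to H(M)\overset{H(e_1)}{\longrightarrow}H(B)\overset{H(s)}{\longrightarrow}H(S)$ in $\underline{\mathcal{H}}$. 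By Remark \ref{Rem_HUVzero}, $H(S)=0$ since $S\in\mathcal{S}\subseteq\mathcal{U}$, so $H(e_1)$ is the kernel of the zero morphism, hence an isomorphism. Consequently, $H(a)=H(e_1)\circ H(m_2)$ is an epimorphism if and only if $H(m_2)$ is.

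It remains to treat the $\mathbb{E}$-triangle $A\overset{m_2}{\rightarrowtail}M\twoheadrightarrow W_S$ with cokernel $W_S\in\mathcal{W}$. I would iterate the same construction: using the $(\mathcal{S},\mathcal{T})$-cotorsion pair, resolve $W_S$ by a conflation $T_{W_S}\rightarrowtail S^{W_S}\twoheadrightarrow W_S$ with $T_{W_S}\in\mathcal{T}$ and $S^{W_S}\in\mathcal{S}$; extension-closedness of $\mathcal{U}$ (combined with $W_S,S^{W_S}\in\mathcal{U}$) forces $T_{W_S}\in\mathcal{W}$. Applying Fact \ref{FactNP}(1) once more and then Proposition \ref{PropForCoh} shows that $H(m_2)$ is itself an isomorphism, completing the argument.

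\emph{Main obstacle.} The main difficulty lies in the final step: ensuring that $H(m_2)$ is epi when the third term is in $\mathcal{W}$ rather than $\mathcal{S}$. The natural route would be to apply a ``dual'' of Lemma \ref{C14}(3) — the one invoked in the proof of Proposition \ref{PropForCoh} — to a carefully constructed morphism of $\mathbb{E}$-triangles. The key input is that $W_S\in\mathcal{W}\subseteq\underline{\mathcal{H}}$ is zero in $\underline{\mathcal{H}}$, combined with the vanishings $\mathbb{E}(W_S,\mathcal{V})=0$ and $\mathbb{E}(\mathcal{S},W_S)=0$; these should guarantee that the iteration terminates and that the conflation $A\rightarrowtail M\twoheadrightarrow W_S$ becomes, after passing through $H$, an isomorphism.
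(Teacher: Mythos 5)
Your proof has a fatal gap at the very first step. The conflation $V_S\ra W_S\ta S$ with $V_S\in\V$ and $W_S\in\W$ that you feed into Fact \ref{FactNP} is exactly the assertion $S\in\B^+=\Cone(\V,\W)$, and your justification for it, the inclusion $\s\subseteq\T$, is false for a general twin cotorsion pair: since $\T=\s^{\bot_1}$, it is equivalent to $\EE(\s,\s)=0$, i.e.\ to rigidity of $(\s,\T)$. Concretely, for the derived category of a field $k$ with the single cotorsion pair $(\U,\V)=(\D^{\le 0},\D^{\ge 0})$ (so $\s=\U$), one has $\B^+=\D^{\ge 0}$, and $S=k[5]\in\s$ admits no triangle $V\to W\to k[5]\to V[1]$ with $W$ concentrated in degree $0$ and $V\in\D^{\ge 0}$ (compare cohomology in degree $-5$). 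So the object $W_S$ you start from need not exist, precisely in the non-rigid situations the corollary is meant to cover. Two further problems: in the iteration you deduce $T_{W_S}\in\U$ from the conflation $T_{W_S}\ra S^{W_S}\ta W_S$ by ``extension-closedness of $\U$'', but extension-closedness controls the middle term of a conflation, not an end term, and $\U={}^{\bot_1}\V$ is not closed under cocones of deflations in this generality; moreover the remaining case of a conflation $A\ra M\ta W$ with $W\in\W$, which you yourself flag as the main obstacle, is only sketched, never proved.

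The repair is to resolve $A$ instead of $S$, which is what the paper does. Completeness of $(\s,\T)$ always provides a conflation $A\ra T^A\ta S^A$ with $T^A\in\T$ and $S^A\in\s$; Fact \ref{FactNP} applied to the two conflations emanating from $A$ yields an object $X$ sitting in conflations $B\ra X\ta S^A$ and $T^A\ra X\ta S$. The latter splits because $\EE(S,T^A)=0$, so $X\cong T^A\oplus S$ and $H(X)=0$ by Remark \ref{Rem_HUVzero}, and a single application of the dual of Proposition \ref{PropForCoh} to the morphism of $\EE$-triangles from $A\ra T^A\ta S^A$ to $B\ra X\ta S^A$ gives the exactness of $H(A)\overset{H(a)}{\longrightarrow}H(B)\to H(X)=0$. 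This both removes the nonexistent resolution of $S$ and makes your iteration unnecessary.
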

\begin{proof}
Resolve $A$ by an $\EE$-triangle $A\overset{t^A}{\longrightarrow}T^A\overset{s^A}{\longrightarrow}S^A\dashrightarrow$ with $S^A\in\s,T^A\in\T$. By Fact \ref{FactNP}, we obtain the following commutative diagram made of $\EE$-trinagles.
\[
\xymatrix{
A \ar[r]^{t^A} \ar[d]_a &T^A \ar[r]^{s^A} \ar[d]^{} &S^A \ar@{=}[d] \ar@{-->}[r]^{}&\\
B \ar[d] \ar[r] &{}^{\exists}X \ar[d] \ar[r] &S^A\ar@{-->}[r]_{}&\\
S \ar@{-->}[d] \ar@{=}[r] &S \ar@{-->}[d]& &\\
&&&
}
\]
Since $\EE(S,T^A)=0$, we see that $T^A\to X\to S$ splits. Thus $X\cong T^A\oplus S$, which implies $H(X)=0$ by Remark \ref{Rem_HUVzero}. By the dual of Proposition \ref{PropForCoh} applied to the upper half of this diagram, $H(A)\overset{H(a)}{\longrightarrow}H(B)\to 0$ becomes exact.
\end{proof}

\section{Hearts of cotorsion pairs}\label{Section_Heart}

In the rest of this article, we deal with single cotorsion pairs. For the notions introduced in the previous section, we continue to use the same symbol, applied to the case $(\s,\T)=(\U,\V)$. Thus for example, we use $\W=\U\cap\V$, $\B^+=\Cone(\V,\W)$, $\B^-=\CoCone(\W,\s)$, $\h=\B^+\cap\B^-$ and $\underline\h=\h/\W$.
\subsection{The heart is abelian}

In this section we fix a cotorsion pair $(\U,\V)$. We will prove that the heart $\underline \h=\B^+\cap \B^-/\U\cap\V$ of a cotorsion pair is abelian.

\begin{lem}\label{C17}
Let $A,B\in \h$, and let
\begin{equation}\label{LCAB}
\xymatrix{C \ar@{ >->}[r]^{g} &A \ar@{->>}[r]^{f} &B}
\end{equation}
be any conflation in $\B$. If $\underline f$ is epimorphic in $\underline \h$, then $C$ belongs to $\B^-$.
\end{lem}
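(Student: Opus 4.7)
The plan is to exhibit $C$ as the head of a conflation with middle term in $\W$ and cokernel term in $\U$, which by definition places $C$ in $\B^-$. The hypothesis that $\underline{f}$ is epimorphic will be fed in via Corollary \ref{Ceq3}, which tells us $C_f\in\U$, and the key trick is to identify $C_f$ with a naturally occurring third term in a $3\times 3$ diagram obtained from axiom {\rm (ET4)}.

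First I would use that $A\in\h\subseteq\B^-$ to fix an $\EE$-triangle $A\overset{w^A}{\longrightarrow}W^A\longrightarrow U^A\overset{\rho^A}{\dashrightarrow}$ with $W^A\in\W$ and $U^A\in\U$; this is precisely the $\EE$-triangle appearing in the construction of $C_f$ (Definition \ref{CMF}), applied to the deflation $f\colon A\twoheadrightarrow B$ of the given conflation $(\ref{LCAB})$. Next I would apply {\rm (ET4)} to the composable pair of inflations $C\overset{g}{\rightarrowtail}A$ and $A\overset{w^A}{\rightarrowtail}W^A$ (whose cokernels are $B$ and $U^A$ respectively). This produces an object $E\in\B$ and a commutative diagram whose middle row is a conflation
\[ C\rightarrowtail W^A\twoheadrightarrow E \]
and whose rightmost column is a conflation $B\rightarrowtail E\twoheadrightarrow U^A$. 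Moreover, condition (i) of {\rm (ET4)} states that this last conflation realizes $f_\ast\rho^A$.

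The decisive observation is that, by definition, $C_f$ is precisely an object realizing the extension $f_\ast\rho^A$ along with the same cokernel term $U^A$. Hence the conflations $B\rightarrowtail E\twoheadrightarrow U^A$ and $B\overset{c_f}{\rightarrowtail}C_f\twoheadrightarrow U^A$ are equivalent, so $E\cong C_f$ in $\B$. By Corollary \ref{Ceq3}, the assumption that $\underline f$ is an epimorphism in $\underline\h$ yields $C_f\in\U$, hence $E\in\U$. Substituting this into the middle row gives a conflation $C\rightarrowtail W^A\twoheadrightarrow E$ with $W^A\in\W$ and $E\in\U$, which shows $C\in\CoCone(\W,\U)=\B^-$.

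The only step requiring any care is the identification $E\cong C_f$: one has to remember that in the single cotorsion pair setting $\s=\U$, that the $\EE$-triangle on $A$ used for (ET4) is chosen to be the same one used in the construction of $C_f$, and that the morphism along which $\rho^A$ is pushed forward in the (ET4) output is exactly the deflation $f$. Once this bookkeeping is in place the rest is a short diagram chase, so I do not expect a genuine obstacle; the substance of the argument is simply that (ET4) exhibits $C_f$ as the upper-right-minus-upper-left portion of the $3\times 3$ diagram arising from the two inflations $C\rightarrowtail A\rightarrowtail W^A$.
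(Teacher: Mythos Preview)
Your proposal is correct and follows essentially the same approach as the paper: apply {\rm (ET4)} to the composable inflations $C\overset{g}{\rightarrowtail}A\overset{w^A}{\rightarrowtail}W^A$, identify the resulting third object with $C_f$ via condition (i) of {\rm (ET4)}, use Corollary \ref{Ceq3} to get $C_f\in\U$, and read off $C\in\CoCone(\W,\U)=\B^-$ from the middle conflation. The paper presents the {\rm (ET4)} diagram in transposed form but the content is identical, including the observation that the lower half recovers the diagram $(\ref{CF2})$ defining $C_f$.
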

\begin{proof}
Let $C\overset{g}{\longrightarrow}A\overset{f}{\longrightarrow}B\dashrightarrow$ be an $\EE$-triangle which gives $(\ref{LCAB})$. Resolve $A$ by an $\EE$-triangle $A\overset{w^A}{\longrightarrow}W^A\to U^A\overset{\rho^A}{\dashrightarrow}$. By {\rm (ET4)}, we get we get following commutative diagram made of $\EE$-triangles.
\begin{equation}\label{CF4}
\xymatrix{
C \ar@{ >->}[d]_g \ar@{=}[r] &C \ar@{ >->}[d]^h\\
A \ar@{ >->}[r]^{w^A} \ar[d]_{f} &{W^A} \ar@{->>}[r] \ar[d] &{U^A} \ar@{=}[d]\\
B \ar@{ >->}[r]_{c_f} &{C_f} \ar@{->>}[r] &{U^A}.}
\end{equation}
The lower half gives $(\ref{CF2})$ in Definition \ref{CMF}. Since $\underline f$ is an epimorphism, we have $C_f\in\U$ by Corollary \ref{Ceq3}. Remark that we have assumed $(\s,\T)=(\U,\V)$. The middle column shows $C\in \B^-$.
\end{proof}

\begin{thm}\label{C18}
For any cotorsion pair $(\U,\V)$ on $\B$, its heart
$\underline \h$ is an abelian category.
\end{thm}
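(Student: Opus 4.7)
By Proposition \ref{C15} and Theorem \ref{C4.3}, $\underline{\h}$ is preabelian and semi-abelian; so it suffices to show that every epimorphism in $\underline{\h}$ is a cokernel. Let $\underline f\in\underline{\h}(A,B)$ be an epimorphism. Using Lemma \ref{Cepi}(1) (applicable since $B\in\B^+$), replace $A$ by $A\oplus W$ for some $W\in\W$ to realize $f\colon A\twoheadrightarrow B$ as a deflation; this leaves $\underline f$ unchanged. Thus we have a conflation $C\rightarrowtail A \overset{f}{\twoheadrightarrow} B$. By Lemma \ref{C17}, $C\in\B^-$. Apply Definition \ref{Cre}(1) to produce the reflection sequence $C\overset{p_C}{\rightarrowtail} C^+\twoheadrightarrow S^U$ with $S^U\in\U$; by Lemma \ref{CP1}(a), $C^+\in\B^-$, so $C^+\in\h$. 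By Proposition \ref{PropFor+Adj}(1), $g\colon C\to A$ factors as $g=\tilde g\circ p_C$ for some $\tilde g\colon C^+\to A$.

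The claim is that $\underline f$ is the cokernel of $\underline{\tilde g}$ in $\underline{\h}$. To set this up, apply Fact \ref{FactNP}(2) (the dual of (1)) to the two $\EE$-triangles $C\rightarrowtail A\twoheadrightarrow B$ and $C\rightarrowtail C^+\twoheadrightarrow S^U$ sharing $C$ as source, producing an object $E$ together with conflations $A\overset{a}{\rightarrowtail} E\overset{e}{\twoheadrightarrow} S^U$ and $C^+\overset{i}{\rightarrowtail} E\overset{j}{\twoheadrightarrow} B$ satisfying $ja=f$ and $ei=\tilde g$. The factorization $g=\tilde g\circ p_C$ together with (ET4) applied to the composite $C\overset{p_C}{\rightarrowtail}C^+\overset{\svecv{-1}{\tilde g}}{\rightarrowtail}C^+\oplus A$ identifies the extension realized by $A\rightarrowtail E\twoheadrightarrow S^U$ with the push-forward of a split extension, so this conflation splits and we get $ea=1_A$ in $\B$. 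Combined with Corollary \ref{CorForCoh} (which, applied to $A\rightarrowtail E\twoheadrightarrow S^U$ using $S^U\in\s$, shows $\underline a$ is an epimorphism in $\underline{\h}$), a standard right-cancellation argument gives $\underline a\underline e=1_E$, so $\underline a$ is an isomorphism in $\underline{\h}$ with inverse $\underline e$. Closure arguments via Lemmas \ref{CP1}(a) and \ref{CP2}(a)--(b) applied with resolutions of $C^+$ and $B$ confirm $E\in\h$.

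Under this isomorphism $\underline a$, proving $\underline f=\operatorname{coker}(\underline{\tilde g})$ reduces to proving $\underline j=\operatorname{coker}(\underline i)$ in $\underline{\h}$. For this I apply Lemma \ref{C14}(3) to the morphism of $\EE$-triangles obtained from Definition \ref{CMF} for $i\colon C^+\to E$, with top row the resolution $C^+\to W^{C^+}\to S^{C^+}$ (possible since $C^+\in\B^-$) and bottom row $E\to C_i\to S^{C^+}$; this yields a cokernel sequence $C^+\to E\to C_i\to 0$ in $\underline{\B}^+$. A second application of Fact \ref{FactNP}(2) to the $\EE$-triangles $C^+\rightarrowtail E\twoheadrightarrow B$ and $C^+\to W^{C^+}\to S^{C^+}$ both starting at $C^+$ identifies the object above as fitting into a conflation $W^{C^+}\rightarrowtail C_i\twoheadrightarrow B$ with $W^{C^+}\in\W$. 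One then shows $\underline{C_i\to B}$ is an isomorphism in $\underline{\h}$ by combining the dual of Corollary \ref{CorForCoh} (monicity, using $W^{C^+}\in\V$) with Corollary \ref{Ceq3} applied to the mapping cone of $C_i\twoheadrightarrow B$, whose $\U$-membership follows from the extension-closedness of $\U$ applied to the conflation $W^{C^+}\rightarrowtail W^{C_i}\twoheadrightarrow C_\phi$ obtained by (ET4) from the resolutions of $W^{C^+}$ and $C_i$.

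The main obstacle is this final identification $C_i\cong B$ in $\underline{\h}$: although $\underline{C_i\to B}$ is readily seen to be both a monomorphism and an epimorphism, upgrading a bimorphism in the semi-abelian heart to an isomorphism is the technical crux and must be extracted from the specific cotorsion-pair structure (rather than inferred abstractly from semi-abelian-ness alone).
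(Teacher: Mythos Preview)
Your approach diverges from the paper's after the common setup (reducing to a deflation $C\overset{g}{\rightarrowtail}A\overset{f}{\twoheadrightarrow}B$, establishing $C\in\B^-$, and lifting $g$ through $p_C$ to $\tilde g\colon C^+\to A$). From there the paper verifies $\underline f=\operatorname{coker}(\underline{\tilde g})$ directly by a hom-set argument, with no auxiliary object $E$. Given $r\in\h(A,Q)$ with $\underline{rg}=0$, write $rg$ through some $W_0\in\W$. The crucial input is that the proof of Lemma~\ref{C17} already produced the explicit diagram~(\ref{CF4}), whose middle column $C\overset{h}{\rightarrowtail}W^A\twoheadrightarrow C_f$ has $C_f\in\U$ by Corollary~\ref{Ceq3}. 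Since $W_0\in\W\subseteq\V$, the vanishing $\EE(C_f,W_0)=0$ forces the map $C\to W_0$ to factor through $h$, yielding $w\in\B(W^A,Q)$ with $rg=wh$. Then $(r-ww^A)g=rg-wh=0$, so $r-ww^A$ factors through $f$, giving $\underline r=\underline{sf}$ as required.

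Your detour through $E$ has a genuine gap at the end. The claim that $C_\phi\in\U$ ``follows from the extension-closedness of $\U$ applied to the conflation $W^{C^+}\rightarrowtail W^{C_i}\twoheadrightarrow C_\phi$'' is incorrect: extension-closedness controls the \emph{middle} term of a conflation, not the cokernel, and there is no reason a quotient of one object of $\W$ by another should lie in $\U$. Without this, you only obtain that $\underline\phi\colon C_i\to B$ is a bimorphism (monic by the dual of Corollary~\ref{CorForCoh}, epic because $\underline f$ is). As you yourself acknowledge, promoting a bimorphism to an isomorphism in a merely semi-abelian category is precisely the content of what you are trying to prove, so the argument is circular at this point. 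There are also smaller issues earlier --- the relation ``$ei=\tilde g$'' does not type-check (the pushout relation from the dual of Fact~\ref{FactNP} should read $ag=ip_C$), and $E\cong A\oplus S^U$ need not lie in $\h$ since $S^U\in\U$ need not belong to $\B^+$ --- but these could be patched by replacing $\underline{(-)}$ with $H$ throughout; the final gap cannot.
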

\begin{proof}
Since $\underline \h$ is preabelian, it remains to show the following.
\begin{itemize}
\item[(a)] If $\underline f$ is epimorphic in $\underline \h$, then $\underline f$ is a cokernel of some morphism in $\underline \h$.

\item[(b)] If $\underline f$ is monomorphic in $\underline \h$, then $\underline f$ is a kernel of some morphism in $\underline \h$.
\end{itemize}
We only show (a), since (b) is its dual. Let $\underline f\colon A\rightarrow B$ be any epimorphism in $\underline \h$. By Lemma \ref{Cepi}, we may assume that $f$ is a deflation, and thus there is a conflation $C\overset{g}{\ra}A\overset{f}{\ta}B$. Then by Lemma \ref{C17}, it follows $C\in\B^-$. Moreover by its proof, we have a commutative diagram $(\ref{CF4})$ made of $\EE$-triangles.

If we take a reflection sequence $C\overset{p_C}{\ra}C^+\ta U$ with $U\in\U=\s$, we obtain $C^+\in\h$. By Proposition \ref{PropFor+Adj}, there exists $a\in\h(C^+,A)$ which satisfies $ap_C=g$. Let us show that
\[ C^+\overset{\underline a}{\longrightarrow}A\overset{\underline f}{\longrightarrow}B\to0 \]
is a cokernel sequence in $\underline\h$. By the assumption that $\underline{f}$ is epimorphic and by the adjoint property of $\sigma^+$, it suffices to show that
\[ \underline{\h}(B,Q)\overset{-\circ\underline{f}}{\longrightarrow}\underline{\h}(A,Q)\overset{-\circ\underline{g}}{\longrightarrow}\underline{\h}(C,Q) \]
is exact for any $Q\in\h$.

Let $r\in\h(A,Q)$ be any morphism satisfying $\underline {rg}=0$. By definition $rg$ factors through some object in $\W$. Since the epimorphicity of $\underline{f}$ implies $C_f\in\U$ in $(\ref{CF4})$ by Corollary \ref{Ceq3}, this means that $rf$ factors through $h$, and thus there exists $w\in\B(W^A,Q)$ satisfying $rg=wh$. Then the exactness of
\[ \B(B,Q)\overset{-\circ f}{\longrightarrow}\B(A,Q)\overset{-\circ g}{\longrightarrow}\B(C,Q) \]
and the equality $(r-ww^A)g=wh-wh=0$ show that there exists $s\in\B(B,Q)$ which gives $r-ww^A=sf$. Thus we obtain $\underline{r}=\underline{sf}$.
\end{proof}

\subsection{Associated cohomological functor}\label{section_3.2}
As in the case of a triangulated category, we define as follows.
\begin{defn}
Let $(\B,\EE,\mathfrak{s})$ be an extriangulated category as before, and let $\mathcal A$ be an abelian category. An additive functor $F\colon\B\to \mathcal A$ is said to be {\it cohomological}, if any conflation $A\overset{x}{\ra}B\overset{y}{\ta}C$ yields an exact sequence $F(A)\overset{F(x)}{\longrightarrow}F(B)\overset{F(y)}{\longrightarrow}F(C)$ in $\mathcal A$.
\end{defn}

\begin{rem}
If $\B$ is an exact category, this means that $F$ is a half exact functor.
\end{rem}

As a corollary of Proposition \ref{PropForCoh} and Corollary \ref{CorForCoh}, we obtain the following for a single cotorsion pair.
\begin{thm}\label{8.11}
For any cotorsion pair $(\U,\V)$, the associated functor $H\colon\B\to\underline\h$ is cohomological.
\end{thm}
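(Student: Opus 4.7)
The plan is to reduce the required exactness of $H(A)\to H(B)\to H(C)$ at $H(B)$ to Proposition \ref{PropForCoh} and Corollary \ref{CorForCoh}, by embedding the given conflation $A\overset{x}{\ra}B\overset{y}{\ta}C$ into a $3\times 3$ diagram whose right column is a cotorsion pair resolution of $C$. First I would use completeness of $(\U,\V)$ to fix an $\EE$-triangle $V_C\ra U_C\overset{u_C}{\ta}C$ with $V_C\in\V$ and $U_C\in\U$, and then apply Fact \ref{FactNP}(1) to the two $\EE$-triangles ending at $C$ to produce a commutative diagram
\[
\xymatrix{
&V_C \ar@{=}[r] \ar[d]^{m_2} &V_C \ar[d]\\
A \ar@{=}[d] \ar[r]^{m_1} &M \ar[d]^{e_2} \ar[r]^{e_1} &U_C \ar[d]^{u_C}\\
A \ar[r]^{x} &B \ar[r]^{y} &C
}
\]
whose rows and columns are all $\EE$-triangles, with in particular $x=e_2m_1$ and $y\,e_2=u_C\,e_1$.

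Next I would isolate the middle-to-right column morphism, which yields the morphism of $\EE$-triangles
\[
\xymatrix{V_C \ar@{=}[d] \ar[r]^{m_2} & M \ar[d]^{e_1} \ar[r]^{e_2} & B \ar[d]^{y}\\ V_C \ar[r] & U_C \ar[r]_{u_C} & C}
\]
whose shape is exactly that required by Proposition \ref{PropForCoh}: the left column is the identity on $V_C\in\V$, and $U_C\in\U$ sits in the bottom middle. The proposition will then deliver a kernel sequence $0\to H(M)\overset{H(e_2)}{\longrightarrow}H(B)\overset{H(y)}{\longrightarrow}H(C)$ in $\underline{\h}$, so $H(e_2)$ is a monomorphism with image equal to $\ker H(y)$.

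To finish, I would note that the middle row $A\overset{m_1}{\longrightarrow}M\overset{e_1}{\longrightarrow}U_C$ is a conflation with $U_C\in\U$, so Corollary \ref{CorForCoh} forces $H(m_1)$ to be an epimorphism. Since $x=e_2m_1$ gives $H(x)=H(e_2)\circ H(m_1)$, the abelian category $\underline{\h}$ yields $\Im H(x)=\Im H(e_2)$, which equals $\ker H(y)$ by the previous paragraph; this is exactness at $H(B)$. The main conceptual hurdle is spotting, among the several $\EE$-triangle morphisms packaged inside the $3\times 3$ diagram, the one column-to-column morphism that matches Proposition \ref{PropForCoh} verbatim; once the resolution of $C$ is chosen, both halves of exactness drop out with essentially no further work, and the cohomologicality of $H$ is immediate.
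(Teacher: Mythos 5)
Your proof is correct and follows the paper's own argument essentially verbatim: the paper likewise resolves $C$ by a conflation $V\ra U\ta C$ with $V\in\V$, $U\in\U$, applies Fact \ref{FactNP}(1) to the two $\EE$-triangles ending at $C$ to get the same $3\times 3$ diagram, and then feeds the middle-to-right column morphism into Proposition \ref{PropForCoh} and the middle row into Corollary \ref{CorForCoh}. The only differences are notational (your $M,m_1,e_1,e_2$ are the paper's $B^{\prime},x^{\prime},y^{\prime},b$) and that you spell out the final image comparison $\Im H(x)=\Im H(e_2)=\Ker H(y)$, which the paper leaves implicit.
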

\begin{proof}
Let us show the exactness of
\begin{equation}\label{ShowExact}
H(A)\xrightarrow{H(x)} H(B)\xrightarrow{H(b)} H(C)
\end{equation}
in $\underline\h$, for any $\EE$-triangle $A\overset{x}{\longrightarrow}B\overset{y}{\longrightarrow}C\dashrightarrow$.
Resolve $C$ by an $\EE$-triangle $V\overset{v}{\longrightarrow}U\overset{u}{\longrightarrow} C\dashrightarrow$ with $U\in\U,V\in\V$. By Fact \ref{FactNP}, we obtain a commutative diagram made of $\EE$-triangles as follows.
\[
\xymatrix{
&V\ar[d] \ar@{=}[r]&V\ar[d]^v&\\
A \ar[r]^{x^{\prime}} \ar@{=}[d] &{}^{\exists}B^{\prime} \ar[r]^{y^{\prime}} \ar[d]^{b} &U \ar[d]^{u} \ar@{-->}[r]^{}&\\
A \ar[r]_x &B \ar@{-->}[d] \ar[r]_y &C\ar@{-->}[d]\ar@{-->}[r]&\\
&&&
}
\]
Then Proposition \ref{PropForCoh} and Corollary \ref{CorForCoh} applied to the case $\s=\U$ shows that
\[ H(B^{\prime})\overset{H(b)}{\longrightarrow}H(B)\overset{H(y)}{\longrightarrow}H(C)\ \ \text{and}\ \ H(A)\overset{H(x^{\prime})}{\longrightarrow}H(B^{\prime})\to0 \]
are exact. This shows the exactness of $(\ref{ShowExact})$.
\end{proof}

We have the following corollaries.
\begin{cor}\label{CorEpim}
For any $A,B\in\h$ and $f\in\h(A,B)$, the following are equivalent.
\begin{enumerate}
\item $\underline f$ is epimorphic in $\underline \h$.
\item There exist a conflation $A\overset{f^{\prime}}{\ra}B^{\prime}\ta U$ in $\B$ with $U\in\U,B^{\prime}\in\h$, and a morphism $b\in\h(B,B^{\prime})$ which gives isomorphism $\underline{b}\colon B\overset{\cong}{\longrightarrow}B^{\prime}$ in $\underline\h$ satisfying $\underline{bf}=\underline f^{\prime}$.
\end{enumerate}
\end{cor}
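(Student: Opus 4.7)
The plan is to handle the two implications separately, using the cohomological functor $H$ from Theorem~\ref{8.11} for $(2)\Rightarrow(1)$, and the $\EE$-triangle produced by Proposition~\ref{PBPO} (together with the construction $C_f$ from Definition~\ref{CMF}) for $(1)\Rightarrow(2)$.

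For $(2)\Rightarrow(1)$, my plan is simply to apply $H$ to the given conflation $A\overset{f'}{\ra}B'\ta U$. Cohomologicity together with $H(U)=0$ (Remark~\ref{Rem_HUVzero}, since $U\in\U$) will yield an exact sequence $H(A)\xrightarrow{H(f')}H(B')\to 0$ in $\underline{\h}$. I will then invoke that, on objects of $\h$, the functor $H$ coincides with the canonical quotient $\h\to\underline{\h}$ (because $\sigma^{+}|_{\uB^{+}}=\id$ and $\sigma^{-}|_{\uB^{-}}=\id$, cf.\ Proposition~\ref{8.2}(b)), so this exactness reads as $\underline{f'}$ being an epimorphism in $\underline{\h}$; combined with $\underline{bf}=\underline{f'}$ and the fact that $\underline{b}$ is an isomorphism, this will transfer the epimorphicity to $\underline{f}$.

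For the harder direction $(1)\Rightarrow(2)$, I intend to use $A\in\h\subseteq\B^{-}$ to choose an $\EE$-triangle $A\xrightarrow{w^{A}}W^{A}\to S^{A}\overset{\rho^{A}}{\dashrightarrow}$ with $W^{A}\in\W$ and $S^{A}\in\s=\U$, and then form the $\EE$-triangle $B\xrightarrow{c_{f}}C_{f}\xrightarrow{s}S^{A}\dashrightarrow$ realizing $f_{\ast}\rho^{A}$, as in Definition~\ref{CMF}. Applying Proposition~\ref{PBPO} to these data will produce a single $\EE$-triangle
\[
A\xrightarrow{\svecv{-f}{w^{A}}} B\oplus W^{A}\xrightarrow{\svech{c_{f}}{g}} C_{f}\dashrightarrow.
\]
Corollary~\ref{Ceq3} (in the single cotorsion pair case $\s=\U$) will then convert the hypothesis that $\underline{f}$ is epimorphic into the assertion $C_{f}\in\U$. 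Setting $B':=B\oplus W^{A}$, $U:=C_{f}$, $f':=\svecv{-f}{w^{A}}$, and $b:=-\iota_{B}\colon B\to B\oplus W^{A}$ should then finish the construction: $B'\in\h$ because $\W\subseteq\h$ and $\h$ is closed under finite direct sums, $\underline{b}$ is an isomorphism in $\underline{\h}$ since the additional summand $W^{A}$ lies in $\W$, and the difference $f'-bf=\svecv{0}{w^{A}}$ factors through $W^{A}\in\W$, giving $\underline{bf}=\underline{f'}$.

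The only real obstacle I foresee is cosmetic: matching signs and summand orientations between the triangle furnished by Proposition~\ref{PBPO} and the desired form of the pair $(f',b)$. All the substantive content is carried by Corollary~\ref{Ceq3}, which identifies $C_{f}\in\U$ exactly when $\underline{f}$ is an epimorphism in $\underline{\h}$.
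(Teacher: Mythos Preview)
Your proof is correct and follows essentially the same approach as the paper's. The paper handles the sign issue by composing with the automorphism $\left(\begin{smallmatrix}-1&0\\0&1\end{smallmatrix}\right)$ of $B\oplus W^A$ to replace $\svecv{-f}{w^A}$ by $\svecv{f}{-w^A}$, whereas you absorb the sign into $b=-\iota_B$; both are equivalent cosmetic choices.
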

\begin{proof}
Suppose that $\underline f$ is an epimorphism in $\underline\h$. Then in diagram $(\ref{CF2})$, we have $C_f\in\U$ by Corollary \ref{Ceq3}. By Proposition \ref{PBPO}, we obtain a conflation of the form
\[ \xymatrix{A \ar@{ >->}[r]^-{\svecv{-f}{w^A}} &B\oplus W^A \ar@{->>}[r] &C_f} \quad(W^A\in\W,C_f\in\U). \]
Modifying this by an isomorphism, $\xymatrix{A \ar@{ >->}[r]^-{\svecv{f}{-w^A}} &B\oplus W^A \ar@{->>}[r] &C_f}$ also becomes a conflation.
The converse follows from Remark \ref{Rem_HUVzero} and Theorem \ref{8.11}.
\end{proof}

\begin{cor}\label{CorKEx}
Let
\[
\xymatrix{
X \ar[r]^{} \ar[d]_z &K \ar[r] \ar[d] &C \ar@{=}[d] \ar@{-->}[r]&\\
A \ar[r]_{x} &B \ar[r]_{y} &C\ar@{-->}[r]&
}
\]
be any morphism of $\EE$-triangles satisfying $H(K)=0$. Then we obtain an exact sequence $H(X)\overset{H(z)}{\longrightarrow}H(A)\overset{H(x)}{\longrightarrow}H(B)\overset{H(y)}{\longrightarrow}H(C)$ in $\underline\h$.
\end{cor}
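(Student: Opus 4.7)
The plan is to glue the two given $\EE$-triangles into a single conflation of the form $X \rightarrowtail A \oplus K \twoheadrightarrow B$ via Proposition \ref{PBPO}, and then apply the cohomological functor $H$ (Theorem \ref{8.11}), using $H(K)=0$ to collapse the middle term to $H(A)$.

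More precisely, write the two $\EE$-triangles as
$X \to K \to C \overset{\delta_1}{\dashrightarrow}$ and $A \overset{x}{\to} B \overset{y}{\to} C \overset{\delta_2}{\dashrightarrow}$. The hypothesis that the given square is a morphism of $\EE$-triangles with $c = \id_C$ means exactly that $z_{\ast}\delta_1 = \delta_2$. Hence the bottom $\EE$-triangle realizes $z_{\ast}\delta_1$, and Proposition \ref{PBPO} applied to the top $\EE$-triangle with the morphism $z \colon X \to A$ produces an $\EE$-triangle
\[ X \xrightarrow{\svecv{-z}{\,\cdot\,}} A \oplus K \xrightarrow{\svech{x}{\,\cdot\,}} B \dashrightarrow. \]

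Now apply $H$. By Theorem \ref{8.11}, this $\EE$-triangle yields an exact sequence $H(X) \to H(A \oplus K) \to H(B)$ in $\underline{\h}$. Since $H$ is additive and $H(K) = 0$ by hypothesis, the canonical map $H(A) \to H(A \oplus K)$ is an isomorphism; under this identification the first map becomes $-H(z) = H(-z)$ and the second becomes $H(x)$. Thus we obtain exactness of $H(X) \xrightarrow{H(z)} H(A) \xrightarrow{H(x)} H(B)$ (sign is irrelevant since images coincide). Applying Theorem \ref{8.11} once more to the bottom $\EE$-triangle gives exactness of $H(A) \xrightarrow{H(x)} H(B) \xrightarrow{H(y)} H(C)$, and splicing these two exact pieces yields the desired four-term exact sequence.

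The only real content is the invocation of Proposition \ref{PBPO} to realize the pair of triangles as a single conflation whose middle term is $A\oplus K$; once this is done the rest is a formal consequence of $H$ being cohomological and additive, together with $H(K)=0$. I do not anticipate any serious obstacle here, since both Proposition \ref{PBPO} and Theorem \ref{8.11} are already in place.
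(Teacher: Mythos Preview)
The proposal is correct and follows essentially the same route as the paper: use Proposition \ref{PBPO} to obtain the conflation $X\rightarrowtail A\oplus K\twoheadrightarrow B$, then apply Theorem \ref{8.11} together with $H(K)=0$. The paper's proof is just a two-sentence sketch of exactly this argument.
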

\begin{proof}
By Proposition \ref{PBPO}, we have a conflation of the form $\xymatrix{X \ar@{ >->}[r]^-{\svecv{z}{\ast}} &A\oplus K \ar@{->>}[r]^-{\svech{x}{\ast}} &B}$, similarly as in the proof of the previous corollary. Thus the above exactness follows from Theorem \ref{8.11}.
\end{proof}

\begin{cor}\label{8.14}
For any object $B\in \B$, the following are equivalent.
\begin{itemize}
\item[(a)] $H(B)=0$.
\item[(b)] $B\in\add(\U\ast\V)$. %For any object $\Omega U\in \Omega \U$, any morphism $u\in \Hom_\B(\Omega U, B)$ factors through $\U$.
%\item[(c)] For any object $\Omega^-V\in \Omega^-\V$, any morphism $v\in \Hom_\B(B, \Omega^-V)$ factors through $\V$.
\end{itemize}
In particular, $\add(\U\ast\V)\subseteq\B$ is extension-closed.
\end{cor}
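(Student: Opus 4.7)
The plan is to establish the equivalence by proving both directions separately, and then deduce the extension-closedness as a corollary of the equivalence.

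The direction $(b) \Rightarrow (a)$ is immediate. If $B$ is a direct summand of some $X \in \U \ast \V$, then there is a conflation $U \ra X \ta V$ with $U \in \U$ and $V \in \V$. Applying the cohomological functor $H$ from Theorem \ref{8.11} yields the exact sequence $H(U) \to H(X) \to H(V)$ in $\underline{\h}$, and since $H(U) = H(V) = 0$ by Remark \ref{Rem_HUVzero} (applied with $\s = \U$ and $\T = \V$), we conclude $H(X) = 0$, and hence $H(B) = 0$ as a direct summand by additivity of $H$.

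For the converse $(a) \Rightarrow (b)$, suppose $H(B) = 0$. Combining Lemma \ref{Ceq} with its dual (using both descriptions of $H$ from Proposition \ref{8.6}), this is equivalent to $B^- \in \U$. I would then apply Fact \ref{FactNP}(1) to the two conflations ending at $B$: the cotorsion-pair resolution $V_B \ra U_B \ta B$ and the coreflection $V_T \ra B^- \ta B$ from Definition \ref{Cre}(2). The extensions realizing the middle row and middle column of the resulting diagram lie respectively in $\EE(B^-, V_B) = 0$ (using $B^- \in \U$) and $\EE(U_B, V_T) = 0$, so both split, giving a central object $M \cong V_T \oplus U_B \cong V_B \oplus B^-$. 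In particular $M \in \U \ast \V$, witnessed by the split conflation $U_B \ra V_T \oplus U_B \ta V_T$. The composition of deflations $M \ta U_B \ta B$ is a deflation $M \ta B$ whose kernel $K$ fits (via an (ET4) argument on composable deflations) into a conflation $V_T \ra K \ta V_B$, so that $K \in \V$ by extension-closedness of $\V$.

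Symmetrically, applying Fact \ref{FactNP}(2) (the dual, via conflations starting at $B$, namely $B \ra V^B \ta U^B$ and $B \ra B^+ \ta S^U$ from diagram $(\ref{CF1})$) produces $M' \cong B^+ \oplus U^B \cong V^B \oplus S^U \in \U \ast \V$ and a conflation $B \ra M' \ta L$ with $L \in \U$. The final step, which is the main obstacle, is to assemble these two conflations $K \ra M \ta B$ and $B \ra M' \ta L$ into a single conflation $U' \ra X \ta V'$ with $U' \in \U$, $V' \in \V$, and $B$ a direct summand of $X$. This assembly uses the interplay of (ET4) and (ET4)$^{\op}$ applied to the composable sequence $K \ra M \ta B \ra M' \ta L$, exploiting that the extreme terms lie in $\V$ and $\U$ respectively.

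Finally, the extension-closedness of $\add(\U \ast \V)$ is a formal consequence of the equivalence: the identification $\add(\U \ast \V) = \ker H$ combined with the general fact that the kernel of any cohomological functor is closed under extensions (given $A \ra B \ta C$ with $H(A) = H(C) = 0$, the exact sequence $H(A) \to H(B) \to H(C)$ forces $H(B) = 0$) and under direct summands (by additivity) immediately transfers these closure properties to $\add(\U \ast \V)$.
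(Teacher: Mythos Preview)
Your direction $(b)\Rightarrow(a)$ and the deduction of extension-closedness are correct and match the paper. However, your $(a)\Rightarrow(b)$ takes a long detour and ends at a genuine gap: the ``assembly'' of the two conflations $K\ra M\ta B$ and $B\ra M'\ta L$ into a single conflation exhibiting $B$ as a summand of an object in $\U\ast\V$ is not carried out, and it is not clear how the sketch you give (``interplay of (ET4) and (ET4)$^{\mathrm{op}}$'') would actually produce such a conflation. The two conflations meet at $B$ with a deflation coming in and an inflation going out, which is not the configuration to which either axiom applies directly; you would need to lift the extension $\delta\in\EE(L,B)$ to $\EE(L,M)$, and there is no obvious reason this lift exists.

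The paper avoids this entirely by a one-line argument that you were very close to. You correctly observe that $H(B)=0$ forces $B^-\in\U$ (dually, the paper uses $B^+\in\V$). But then, rather than invoking Fact~\ref{FactNP}, simply apply Proposition~\ref{PBPO} to the coreflection diagram~(\ref{CF1op}): the morphism of $\EE$-triangles from $B^-\ra U_T\ta S^B$ to $B\ra T^B\ta S^B$ yields a conflation
\[
B^-\ \ra\ B\oplus U_T\ \ta\ T^B
\]
with $B^-\in\U$, $U_T\in\W$, and $T^B\in\V$. Thus $B\oplus U_T\in\U\ast\V$ and $B\in\add(\U\ast\V)$ immediately. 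The paper runs the dual argument on~(\ref{CF1}), obtaining $U_B\ra B\oplus T^U\ta B^+$ with $B^+\in\V$. Either way, the entire second half of your construction (the object $M'$, the ``assembly'') is unnecessary.
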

\begin{proof}
(b)$\Rightarrow$(a) follows from Remark \ref{Rem_HUVzero} and Theorem \ref{8.11}. Let us show the converse.
By Proposition \ref{PBPO} there is a conflation
\[ U_B\ra B\oplus T^U\ta B^+ \]
in the notation of $(\ref{CF1})$. By the dual of Proposition \ref{8.2}, we see that $H(B)=\sigma^-\circ\sigma^+(B)=0$ implies $B^+\in\V$, and thus $B\in \add(\U\ast\V)$.
\end{proof}

\begin{defn}\label{DefKC}
Let $\B$ and $(\U,\V)$ be as above.
\begin{enumerate}
\item We put $\K=\add(\U\ast\V)$, and call it the \emph{kernel} of $(\U,\V)$. Here, $\add$ denotes the closure under taking direct summands in $\B$.
\item We put $\C={^{\bot_1}}\K=\U\cap{^{\bot_1}}\U$, and call it the \emph{coheart} of $(\U,\V)$. Here ${^{\bot_1}}\U\subseteq\B$ denotes the subcategory of $\B$, consisting of those $X\in\B$ which satisfies $\EE(X,\U)=0$.
\end{enumerate}
\end{defn}

\subsection{Heart-equivalence}

We continue to use the notation $\sigma^{+},\sigma^{-},H,\h,\underline{\h}$ for a cotorsion pair $(\U,\V)$. In this subsection, for another cotorsion pair $(\U^{\prime},V^{\prime})$, we denote the corresponding notions for it by $\sigma^{\prime+},\sigma^{\prime-},H^{\prime},\h^{\prime},\underline{\h}^{\prime}$ to distinguish.
\begin{defn}\label{DefHeartEquiv}
Let $(\U,\V)$ and $(\U^{\prime},\V^{\prime})$ be cotorsion pairs. $($We do not require that $((\U,\V),(\U^{\prime},\V^{\prime}))$ is a twin cotorsion pair.$)$ They are said to be {\it heart-equivalent}, if there is an equivalence $E\colon\underline\h\overset{\simeq}{\longrightarrow}\underline\h^{\prime}$ which makes
\begin{equation}\label{CommBHH}
\xy
(0,8)*+{\B}="0";
(0,-10)*+{}="1";
(-10,-6)*+{\underline\h}="2";
(10,-6)*+{\underline\h^{\prime}}="4";
{\ar_{H} "0";"2"};
{\ar^{H^{\prime}} "0";"4"};
{\ar_{E} "2";"4"};
\endxy
\end{equation}
commutative up to natural isomorphism.
\end{defn}

\begin{lem}\label{LemForHeartEq}
Let $(\U,\V)$ and $(\U^{\prime},\V^{\prime})$ be arbitrary cotorsion pairs. Assume that they satisfy $H^{\prime}(\U)=0$. Then any reflection sequence with respect to $(\U,\V)$
\begin{equation}\label{RefST}
X\overset{p}{\ra}Z\ta U\quad(U\in\U)
\end{equation}
gives an isomorphism $H^{\prime}(p)\colon H^{\prime}(X)\overset{\cong}{\longrightarrow}H^{\prime}(Z)$ in $\underline\h^{\prime}$.

If we assume $H^{\prime}(\V)=0$, then the dual holds for coreflection sequences with respect to $(\U,\V)$.
\end{lem}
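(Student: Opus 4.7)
The plan is to combine cohomological exactness of $H'$ (Theorem \ref{8.11}) with an auxiliary pullback-type construction via Fact \ref{FactNP}(1), reducing the claim to one about a carefully chosen intermediate object $M$ whose comparison with both $X$ and $Z$ is controlled by Proposition \ref{PropForCoh}.

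First, apply Theorem \ref{8.11} to the $\EE$-triangle $X\overset{p}{\to}Z\overset{z}{\to}U\dashrightarrow$: the exact sequence $H'(X)\xrightarrow{H'(p)} H'(Z)\to H'(U)$ together with $H'(U)=0$ (since $U\in\U$ and $H'(\U)=0$) makes $H'(p)$ an epimorphism in $\underline{\h}'$.

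For injectivity, by completeness of $(\U',\V')$ take a conflation $V'\rightarrowtail U'\overset{u'}{\twoheadrightarrow}U$ realizing some $\delta'\in\EE(U,V')$ with $V'\in\V'$ and $U'\in\U'$. Apply Fact \ref{FactNP}(1) to the two $\EE$-triangles $X\to Z\to U\dashrightarrow\delta$ and $V'\to U'\to U\dashrightarrow\delta'$ both ending at $U$; this produces an object $M$ together with $\EE$-triangles
\[
V'\longrightarrow M\overset{e_2}{\longrightarrow}Z\dashrightarrow z^{\ast}\delta'
\quad\text{and}\quad
X\overset{m_1}{\longrightarrow}M\overset{e_1}{\longrightarrow}U'\dashrightarrow (u')^{\ast}\delta,
\]
satisfying $e_2\circ m_1=p$ and $z\circ e_2=u'\circ e_1$. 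These compatibilities furnish a morphism of $\EE$-triangles with identity on $V'\in\V'$ from $V'\to M\to Z$ down to $V'\to U'\to U$. Applying Proposition \ref{PropForCoh} to this morphism yields the kernel sequence $0\to H'(M)\to H'(Z)\to H'(U)=0$ in $\underline{\h}'$, so $H'(e_2)\colon H'(M)\to H'(Z)$ is an isomorphism.

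Since $H'(p)=H'(e_2)\circ H'(m_1)$ and $H'(e_2)$ is an iso, it remains to show $H'(m_1)\colon H'(X)\to H'(M)$ is an isomorphism. Surjectivity follows from Theorem \ref{8.11} applied to $X\to M\to U'$ with $H'(U')=0$. The main obstacle is the injectivity of $H'(m_1)$: this I plan to handle by a symmetric construction, applying Fact \ref{FactNP}(2) to $X\to M\to U'$ and a $(\U',\V')$-coresolution $X\rightarrowtail V^{X}\twoheadrightarrow U^{X}$ (with $V^{X}\in\V'$, $U^{X}\in\U'$) to produce a dual pushout-type object $N$, and invoking the dual form of Proposition \ref{PropForCoh} (which holds by the analogous argument in the opposite category, exploiting $H'(\U')=H'(\V')=0$ symmetrically) to supply the required mono. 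Together with the surjective direction this gives $H'(m_1)$, and hence $H'(p)$, an isomorphism.
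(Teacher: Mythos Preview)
Your argument never uses the hypothesis that $X\overset{p}{\ra}Z\ta U$ is a \emph{reflection sequence}; you only use that $U\in\U$ and $H'(\U)=0$. That is not enough: for instance, taking $(\U',\V')=(\U,\V)$ so that $H'=H$, the conflation $X\ra V^X\ta U^X$ coming from the cotorsion pair has $U^X\in\U$ and $V^X\in\B^+$, yet $H(V^X)=0$ while $H(X)$ need not vanish, so $H(p)$ is not an isomorphism in general. The reflection property is essential, and your proposal has no place where it enters.

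Concretely, the ``symmetric construction'' you sketch for the injectivity of $H'(m_1)$ does not close the argument. Applying the dual of Fact~\ref{FactNP} to $X\to M\to U'$ and $X\to V^X\to U^X$ produces $N$ together with a morphism of $\EE$-triangles from $X\to V^X\to U^X$ to $M\to N\to U^X$. The dual of Proposition~\ref{PropForCoh} applied here yields a \emph{cokernel} sequence $H'(X)\to H'(M)\to H'(N)\to 0$, which only re-confirms that $H'(m_1)$ is epi; it says nothing about injectivity. You are left with the same type of problem (show $H'$ of an inflation into a cone over $\U'$ is mono), so the reduction regresses indefinitely.

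The paper's proof uses the reflection hypothesis exactly once, via Proposition~\ref{PropForRef}: resolving $X$ by $V_X\to U_X\overset{u_X}{\to}X\overset{\lambda_X}{\dashrightarrow}$ with $U_X\in\U$, $V_X\in\V$, the reflection condition guarantees $\mu\in\EE(U,U_X)$ with $(u_X)_\ast\mu=\nu$. Realizing $\mu$ yields an $\EE$-triangle $U_X\to U_0\to U$ with $U_0\in\U$ (extension-closedness) and a morphism of $\EE$-triangles down to $X\to Z\to U$. Since $H'(U_X)=H'(U_0)=0$, Corollary~\ref{CorKEx} gives the four-term exact sequence $0\to H'(X)\to H'(Z)\to 0$, whence $H'(p)$ is an isomorphism. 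This single application of Proposition~\ref{PropForRef} is the missing idea in your attempt.
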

\begin{proof}
Let $X\overset{p}{\longrightarrow}Z\to U\overset{\nu}{\dashrightarrow}$ be an $\EE$-triangle which gives $(\ref{RefST})$. Resolve $X$ by an $\EE$-triangle
\[ V_X\overset{v_X}{\longrightarrow}U_X\overset{u_X}{\longrightarrow}X\overset{\lambda_X}{\dashrightarrow} \]
with $U_X\in\U,V_X\in\V$. By Proposition \ref{PropForRef}, there is $\mu\in\EE(U,U_X)$ satisfying $(u_X)_{\ast}\mu=\nu$.
If we realize $\mu$ as $U_X\to U_0\to U\overset{\mu}{\dashrightarrow}$, then $U_0\in\U$ follows from the extension-closedness of $\U\subseteq\B$. We have a morphism of $\EE$-triangles as follows.
\[
\xymatrix{
U_X \ar[r] \ar[d]_{u_X} &U_0 \ar[r]^{} \ar[d]_{{}^{\exists}} &U \ar@{=}[d] \ar@{-->}[r]^{\mu}&\\
X \ar[r]_{p} &Z \ar[r]_{} &U\ar@{-->}[r]_{\nu}&
}
\]
By Corollary \ref{CorKEx}, we obtain an exact sequence
\[ 0\to H^{\prime}(X)\overset{H^{\prime}(p)}{\longrightarrow}H^{\prime}(Z)\to 0 \]
in $\underline\h^{\prime}$, which means that $H^{\prime}(p)$ is an isomorphism.
\end{proof}

\begin{prop}\label{Prop_HeartEq}
Let $(\U,\V)$ and $(\U^{\prime},\V^{\prime})$ be arbitrary cotorsion pairs, and let $\K$ and $\K^{\prime}$ be their kernels, respectively. The following are equivalent.
\begin{enumerate}
\item $(\U,\V)$ and $(\U^{\prime},\V^{\prime})$ are heart-equivalent.
\item The equalities $H^{\prime}(\U)=H^{\prime}(\V)=0$ and $H(\U^{\prime})=H(\V^{\prime})=0$ are satisfied.
\item $\K=\K^{\prime}$ holds.
\end{enumerate}
\end{prop}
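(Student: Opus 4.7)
The plan is to split the three-way equivalence into two easy implications and one substantive construction. For $(1)\Rightarrow(2)$: by Remark \ref{Rem_HUVzero} we have $H(\U)=H(\V)=0$, so commutativity of $(\ref{CommBHH})$ forces $H'(\U)\cong E(H(\U))=0$ and similarly $H'(\V)=0$; swapping the two pairs yields $H(\U')=H(\V')=0$. For $(2)\Leftrightarrow(3)$: Corollary \ref{8.14} gives $H'(B)=0 \iff B\in\K'$, so $H'(\U)=H'(\V)=0$ is equivalent to $\U\cup\V\subseteq\K'$, and since $\K'$ is closed under extensions and direct summands (Corollary \ref{8.14} again), this is equivalent to $\K=\add(\U\ast\V)\subseteq\K'$; the reverse inclusion comes symmetrically from $H(\U')=H(\V')=0$.

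The substantive step is $(2)\Rightarrow(1)$. I would define $E\colon\underline\h\to\underline\h'$ as the functor induced by the restriction of $H'$ to $\h$. Since $H'(\U)=0$ and $\W=\U\cap\V\subseteq\U$, the functor $H'$ annihilates every morphism factoring through $\W$, so $E$ descends to the quotient $\underline\h=\h/\W$. To produce a natural isomorphism $E\circ H\cong H'$, recall that by construction $H(B)=\sigma^-\sigma^+(B)$, built from $B$ via a reflection sequence $B\overset{p_B}{\ra}B^+\ta S^U$ with $S^U\in\U$, followed by a coreflection sequence $V\ra (B^+)^-\overset{m_{B^+}}{\ta}B^+$ with $V\in\V$. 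By Lemma \ref{LemForHeartEq}, the hypothesis $H'(\U)=0$ makes $H'(p_B)$ an isomorphism, and its dual (using $H'(\V)=0$) makes $H'(m_{B^+})$ an isomorphism. Composing, $H'(B)\cong H'(B^+)\cong H'(\sigma^-\sigma^+(B))=E(H(B))$, and this is natural in $B$ because $p_B$ and $m_{B^+}$ are the unit and counit of the adjunctions of Proposition \ref{8.2} and its dual.

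To see that $E$ is an equivalence I would invoke the symmetric construction: the restriction of $H$ to $\h'$ descends to $E'\colon\underline\h'\to\underline\h$ with $E'\circ H'\cong H$ by the same argument. For $X\in\h$ one then computes $E'(E(X))=H(H'(X))\cong H(X)=X$ in $\underline\h$, and symmetrically $E\circ E'\cong\id_{\underline\h'}$.

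The main obstacle is the natural isomorphism $E\circ H\cong H'$. Its construction rests entirely on Lemma \ref{LemForHeartEq}, and condition (2) is calibrated precisely so that the hypothesis of that lemma holds: for a single cotorsion pair $(\U,\V)$ the reflection sequences have their cokernel object in $\s=\U$, while the coreflection sequences have their kernel object in $\V$, and each of these is sent to zero by $H'$ exactly by (2). Once this matching is recognized, the remaining verifications are essentially formal bookkeeping with units and counits of adjunctions.
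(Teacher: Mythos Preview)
Your proposal is correct and follows essentially the same approach as the paper: the implications $(1)\Rightarrow(2)$ and $(2)\Leftrightarrow(3)$ via Remark~\ref{Rem_HUVzero} and Corollary~\ref{8.14} are identical, and for $(2)\Rightarrow(1)$ both you and the paper define $E$ as $H'$ restricted to $\h$ (descending to $\underline\h$ since $H'(\W)=0$), build the natural isomorphism $E\circ H\cong H'$ by applying Lemma~\ref{LemForHeartEq} and its dual to the reflection and coreflection sequences, and then argue $E'\circ E\cong\mathrm{Id}$ by restricting the resulting triangle of functors along $\h\hookrightarrow\B$.
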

\begin{proof}
By Corollary \ref{8.14}, $(2)$ is equivalent to $\U,\V\subseteq\K^{\prime}$ and $\U^{\prime},\V^{\prime}\subseteq\K$. Thus $(3)\Rightarrow(2)$ is obvious, and $(2)\Rightarrow(3)$ follows from the extension-closedness of $\K$ and $\K^{\prime}$ in $\B$. Let us show $(1)\Leftrightarrow(2)$.

$(1)\Rightarrow(2)$ This immediately follows from Remark \ref{Rem_HUVzero} and the commutativity of $(\ref{CommBHH})$.

$(2)\Rightarrow(1)$ Remark that $H^{\prime}(\U)=0$ implies that the functor $H^{\prime}$ factors through $\B/(\U\cap\V)$ to give a functor $\B/(\U\cap\V)\to\underline\h^{\prime}$.
Particularly, composing with the inclusion $\underline\h\hookrightarrow\B/(\U\cap\V)$, we obtain a functor $E\colon\underline\h\to\underline\h^{\prime}$ which satisfies $E(X)=H^{\prime}(X)$ for any object $X\in\underline\h$. Similarly, $H$ induces a functor $E^{\prime}\colon\underline\h^{\prime}\to\underline\h$.

Let us show the commutativity of $(\ref{CommBHH})$. For each object $B\in\B$, take a reflection sequence
\[ B\overset{p_B}{\ra}\sigma^+(B)\ta U_B\quad(U_B\in\U) \]
and a coreflection sequence
\[ V_B\ra H(B)\overset{m_B}{\ta}\sigma^+(B)\quad(V_B\in\V) \]
with respect to $(\U,\V)$. Then for any $B,C\in\B$ and any $f\in\B(B,C)$, the functoriality of $\sigma^+$ and $\sigma^-$ gives the following commutative diagram in $\B/(\U\cap\V)$.
\[
\xymatrix{
B \ar[r]^{\underline{p_B}} \ar[d]_{\underline{f}} &\sigma^+(B)  \ar[d]^{\sigma^+(\underline{f})} &\ar[l]_{\underline{m_B}} H(B) \ar[d]^{H(f)}\\
C \ar[r]_{\underline{p_C}} &\sigma^+(C)  & \ar[l]^{\underline{m_C}} H(C)
}
\]
By Lemma \ref{LemForHeartEq}, we obtain a commutative diagram in $\underline\h^{\prime}$
\[
\xy
(-26,8)*+{H^{\prime}(B)}="0";
(0,8)*+{H^{\prime}(\sigma^+(B))}="2";
(30,8)*+{H^{\prime}(H(B))}="4";
(50,8)*+{E(H(B))}="6";
(-26,-8)*+{H^{\prime}(C)}="10";
(0,-8)*+{H^{\prime}(\sigma^+(C))}="12";
(30,-8)*+{H^{\prime}(H(C))}="14";
(50,-8)*+{E(H(C))}="16";
{\ar^(0.42){H^{\prime}(p_B)}_(0.42){\cong} "0";"2"};
{\ar_(0.46){H^{\prime}(m_B)}^(0.46){\cong} "4";"2"};
{\ar@{=} "4";"6"};
{\ar_{H^{\prime}(f)} "0";"10"};
{\ar^{} "2";"12"};
{\ar^{E(H(f))} "6";"16"};
{\ar_(0.42){H^{\prime}(p_C)}^(0.42){\cong} "10";"12"};
{\ar^(0.46){H^{\prime}(m_C)}_(0.46){\cong} "14";"12"};
{\ar@{=} "14";"16"};
\endxy
\]
in which the horizontal arrows are isomorphisms. Thus if we define $\zeta_B\in\underline\h^{\prime}(H^{\prime}(B),E(H(B)))$ by
\[ \zeta_B=(H^{\prime}(m_B))^{-1}\circ H^{\prime}(p_B)\colon H^{\prime}(B)\to E(H(B)), \]
then the above commutativity shows that $\zeta=\{ \zeta_B\}_{B\in\B}$ gives a natural isomorphism $\zeta\colon H^{\prime}\overset{\cong}{\Longrightarrow}E\circ H$. Similarly, we can show the existence of a natural isomorphism $\zeta^{\prime}\colon H\overset{\cong}{\Longrightarrow}E^{\prime}\circ H^{\prime}$.

Composing with the inclusion $\h\hookrightarrow\B$, we see that
\[
\xy
(0,8)*+{\h}="0";
(0,-10)*+{}="1";
(-18,-6)*+{\underline\h}="2";
(0,-6)*+{\underline\h^{\prime}}="3";
(18,-6)*+{\underline\h}="4";
{\ar_{\pi} "0";"2"};
{\ar_{E} "2";"3"};
{\ar_{E^{\prime}} "3";"4"};
{\ar^{\pi} "0";"4"};
\endxy
\]
is commutative up to natural isomorhism, where $\pi$ is the canonical quotient functor. Then $E^{\prime}\circ E\cong\mathrm{Id}$ follows immediately. Similarly, we obtain $E\circ E^{\prime}\cong\mathrm{Id}$.
\end{proof}

Let us write as $(\U,\V)\le (\U^{\prime},\V^{\prime})$ when $\V\subseteq\V^{\prime}$ holds, as in \cite[\S 2]{S}. Then for a fixed cotorsion pair $(\U,\V)$, Proposition \ref{Prop_HeartEq} tells that the largest {\it possible} cotorsion pair which is heart-equivalent to $(\U,\V)$ should be $(\C,\K)$. While the pair $(\C,\K)$ is not always a cotorsion pair, a necessary and sufficient condition for it to be a cotorsion pair can be given by the existence of enough projectives of certain type in $\underline\h$. We will deal with this in the next section (Theorem \ref{HCoH}).
\begin{rem}\label{RemRigid}
The following is obvious.
\begin{enumerate}
\item If $(\C,\K)$ is a cotorsion pair, it is rigid by definition.
\item If $(\U,\V)$ is rigid from the first, then $(\C,\K)=(\U,\V)$ holds. Especialy, $(\C,\K)$ is indeed a cotorsion pair in this case.
\end{enumerate}
\end{rem}

\section{Hearts with enough projectives}\label{Section_ProjectiveHeart}

In the following sections, we assume that $\B$ has enough projectives. As before, the subcategory of projectives is denoted by $\mathcal P\subseteq\B$.
In this section, we give a sufficient condition for the heart $\underline{\h}$ to have enough projectives (Theorem \ref{HCoH}), in terms of the kernel and the coheart. Moreover, under this condition, the heart admits an equivalence $\underline{\h}\simeq\mod(\C/\mathcal P)$ (Proposition \ref{Propmod}).

The existence of enough projectives gives the following criterion.
\begin{prop}\label{approxi}
Assume $\B$ has enough projectives, as above.
Let $\U,\V\subseteq\B$ be full additive subcategories, closed under taking direct summands and isomorphisms. Suppose that $\U$ satisfies the following conditions.
\begin{itemize}
\item[{\rm (i)}] $\mathcal P\subseteq\U$.
\item[{\rm (ii)}] $\U\subseteq\B$ is extension-closed.
\end{itemize}
Then, the following are equivalent.
\begin{enumerate}
\item $(\U,\V)$ is a cotorsion pair on $\B$.
\item $\EE(\U,\V)=0$ and $\B=\CoCone(\V,\U)$ holds.
\end{enumerate}
\end{prop}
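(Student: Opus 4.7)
The direction $(1)\Rightarrow(2)$ is essentially unwrapping definitions: condition (a) in Definition \ref{extriangulated} gives $\EE(\U,\V)=0$ directly, and the second completeness conflation $X\ra V^X\ta U^X$ supplied by (b) exhibits each $X\in\B$ as an object of $\CoCone(\V,\U)$.

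For the nontrivial direction $(2)\Rightarrow(1)$, the closure hypotheses on $\U$ and $\V$, the vanishing $\EE(\U,\V)=0$, and one half of the completeness condition (the conflations $X\ra V\ta U$ provided by $\B=\CoCone(\V,\U)$) are all assumed or immediate. What remains is to produce, for each $X\in\B$, a conflation $V_X\ra U_X\ta X$ with $V_X\in\V$ and $U_X\in\U$. Since $\B$ has enough projectives, I would first choose a deflation $P\ta X$ with $P\in\mathcal P$, giving a conflation $K\ra P\ta X$ for some syzygy $K$. Applying hypothesis (2) to $K$ yields a conflation $K\ra V\ta U$ with $V\in\V$, $U\in\U$. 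These two conflations share their source $K$, so the dual of Fact \ref{FactNP} produces an object $M$ fitting in a commutative diagram
$$
\xymatrix{
K \ar@{ >->}[r] \ar@{ >->}[d] & P \ar@{->>}[r] \ar@{ >->}[d] & X \ar@{=}[d] \\
V \ar@{ >->}[r] \ar@{->>}[d] & M \ar@{->>}[r] \ar@{->>}[d] & X \\
U \ar@{=}[r] & U
}
$$
made of $\EE$-triangles, in which the middle row and middle column are conflations $V\ra M\ta X$ and $P\ra M\ta U$ respectively. Since $P\in\mathcal P\subseteq\U$ by (i) and $\U$ is extension-closed by (ii), the column forces $M\in\U$, and then the row $V\ra M\ta X$ is the sought conflation, completing the verification of completeness.

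I do not foresee a serious obstacle: the only step with content is the combination of the two conflations meeting at $K$, which is exactly what the dual of Fact \ref{FactNP} provides, and the sole purpose of enough projectives is to funnel a general object $X$ through a syzygy into the reach of hypothesis (2). The only point requiring minor care is to verify that the combined object $M$ lands in $\U$, which follows at once from the conflation $P\ra M\ta U$ together with $\mathcal P\subseteq\U$ and the extension-closedness of $\U$.
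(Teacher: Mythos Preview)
Your proof is correct and follows essentially the same route as the paper's: take a projective cover of the given object, resolve the resulting syzygy via the $\CoCone(\V,\U)$ hypothesis, combine the two conflations sharing the syzygy using Fact~\ref{FactNP} (part (2), the dual version), and conclude $M\in\U$ from $\mathcal P\subseteq\U$ and extension-closedness. The only cosmetic difference is notation and that the paper omits the trivial direction $(1)\Rightarrow(2)$.
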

\begin{proof}
It suffices to show $\B=\Cone(\V,\U)$, under the assumption of {\rm (2)}.
Let $B\in\B$ be any object. Since $\B$ has enough projectives, there is a conflation $X\ra P\ta B$ with $P\in\mathcal P$.
Since $\B=\CoCone(\V,\U)$, this $X$ has a conflation $X\ra V^{\prime}\ta U^{\prime}$ with $U^{\prime}\in\U,V^{\prime}\in\V$. By Fact \ref{FactNP}, we obtain the following commutative diagram made of conflations.
$$\xymatrix{
X \ar@{ >->}[r] \ar@{ >->}[d] &P \ar@{->>}[r] \ar@{ >->}[d] &B \ar@{=}[d]\\
V^{\prime} \ar@{ >->}[r] \ar@{->>}[d] &{}^{\exists}M \ar@{->>}[r] \ar@{->>}[d] &B\\
U^{\prime} \ar@{=}[r] &U^{\prime}
}
$$
By the assumption of {\rm (i),(ii)}, it follows $M\in\U$.
\end{proof}

\subsection{Condition for the heart to have enough projectives}
In the rest of this section, we fix a single cotorsion pair $(\U,\V)$ on $\B$.

\begin{defn}\label{DefOmegaB1}
For any subcategory $\mathcal B_1\subseteq\B$, we define as $\Omega\B_1=\CoCone(\mathcal{P},\B_1)$. Namely, $\Omega \mathcal B_1$ is the subcategory of $\B$ consisting of objects $\Omega B_1$ such that there exists a conflation
$$\Omega B_1\rightarrowtail P\twoheadrightarrow B_1$$
with  $P\in \mathcal{P}$ and $B_1\in \B_1$.
\end{defn}

If subcategory $\B_1\subseteq\B$ contains $\mathcal P$, the above definition can be described by the following.
In the rest of this section, we write the quotient of such $\B_1$ by $\mathcal P$ as $\oB_1=\B_1/\mathcal P$. This is a full subcategory of $\oB$. For any morphism $f\in\B(X,Y)$, its image in $\oB(X,Y)$ will be denoted by $\overline f$.
\begin{prop}\label{3.7}
The following correspondence gives a functor $\Omega\colon\oB\to\oB$.
\begin{enumerate}
\item For each object $B$, choose an $\EE$-triangle
\begin{equation}\label{ConfOmegaB}
X\to P\to B\dashrightarrow
\end{equation}
with $P\in\mathcal P$, and put $\Omega B=X$.
\item Let $\overline f\in\oB(B,B^{\prime})$ be any morphism. Since $P$ is projective, any representative $f\in\B(B,B^{\prime})$ of $\overline f$ induces a morphism of $\EE$-triangles
$$\xymatrix
{\Omega B \ar[r] \ar[d]_g &P \ar[r] \ar[d] &B \ar[d]^f \ar@{-->}[r]& \\
\Omega B^{\prime} \ar[r]  &P^{\prime} \ar[r]  &B^{\prime} \ar@{-->}[r]&
}$$
where the rows are the $\EE$-triangles chosen in {\rm (1)}. We put $\Omega\overline f=\overline g$. This gives a well-defined homomorphism $\oB(B,B^{\prime})\to\oB(\Omega B,\Omega B^{\prime})$.
\end{enumerate}
Moreover, this functor is uniquely determined up to natural isomorphism, independently from the choice of $\EE$-triangles $(\ref{ConfOmegaB})$.
In particular for each $B\in\B$, object $\Omega B$ is unique up to isomorphism in $\oB$.
Remark that the image $\Omega\oB_1$ of $\oB_1$ by this functor agrees with the quotient $\overline{\Omega\B_1}=(\Omega\B_1)/\mathcal P$, where $\Omega\B_1$ is as in Definition \ref{DefOmegaB1}.
\end{prop}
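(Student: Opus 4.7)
The plan is to verify that the standard construction of the syzygy functor on the quotient by projectives works in the extriangulated setting, paying attention to the fact that inflations need not be monic. The proof naturally divides into: existence of a lift $g$, well-definedness of $\overline g$ (independence of both the chosen lift and the chosen representative of $\overline f$), functoriality, and finally uniqueness of $\Omega$ up to natural isomorphism together with the identification $\Omega\oB_1=\overline{\Omega\B_1}$.

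For existence, given a representative $f\in\B(B,B^{\prime})$ of $\overline f$, projectivity of $P$ applied to the $\EE$-triangle $\Omega B^{\prime}\to P^{\prime}\to B^{\prime}$ through Fact \ref{FactExact} gives $\EE(P,\Omega B^{\prime})=0$, so $\B(P,P^{\prime})\to\B(P,B^{\prime})$ is surjective and $fp$ admits a lift $h\in\B(P,P^{\prime})$ with $p^{\prime}h=fp$. Applying (ET3)$^{\op}$ to the resulting right square then produces $g\in\B(\Omega B,\Omega B^{\prime})$ such that the triplet $(g,h,f)$ is a morphism of $\EE$-triangles from $\delta$ to $\delta^{\prime}$; in particular $i^{\prime}g=hi$ and the extension-morphism identity $g_{\ast}\delta=f^{\ast}\delta^{\prime}$ holds.

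To show $\overline g$ depends only on $\overline f$: if $(g_1,h_1,f)$ and $(g_2,h_2,f)$ are two such morphisms of $\EE$-triangles, then $(g_1)_{\ast}\delta=f^{\ast}\delta^{\prime}=(g_2)_{\ast}\delta$, hence $(g_1-g_2)_{\ast}\delta=0$, i.e.\ $\delta^{\sharp}(g_1-g_2)=0$. By the first exact sequence of Fact \ref{FactExact} applied to $\Omega B\overset{i}{\longrightarrow}P\overset{p}{\longrightarrow}B\overset{\delta}{\dashrightarrow}$, evaluated at $\Omega B^{\prime}$, this means $g_1-g_2=k\circ i$ for some $k\in\B(P,\Omega B^{\prime})$, and since this factors through $P\in\mathcal P$ we get $\overline{g_1}=\overline{g_2}$ in $\oB$. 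For independence of the representative, if $f=\beta\gamma$ factors through some $Q\in\mathcal P$, then $f^{\ast}\delta^{\prime}=\gamma^{\ast}\beta^{\ast}\delta^{\prime}=0$ using $\EE(Q,-)=0$; lifting $\beta$ to $\tilde\beta\colon Q\to P^{\prime}$ and setting $h=\tilde\beta\gamma p$, the triplet $(0,h,f)$ satisfies all the required identities, so $\Omega\overline f=0$.

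Functoriality is straightforward: $\id_{\Omega B}$ with $\id_P$ lifts $\id_B$, and composing lifts of $f_1,f_2$ produces a lift of $f_2f_1$. Uniqueness up to natural isomorphism (in particular, $\Omega B$ being well-defined up to isomorphism in $\oB$) follows by applying the construction to $\id_B$ between two different choices of $\EE$-triangles for $B$, then invoking the established well-definedness to conclude the resulting maps are mutually inverse in $\oB$. The identification $\Omega\oB_1=\overline{\Omega\B_1}$ is immediate from the description. The only substantive obstacle is the well-definedness step above: in the exact-category setting one would simply invoke monicity of $i^{\prime}$, but here I must exploit that $(g_1,h_1,f)$ and $(g_2,h_2,f)$ are \emph{morphisms of $\EE$-triangles} rather than merely commutative diagrams, so that the difference kills $\delta$ and Fact \ref{FactExact} forces the desired factorization through $P$.
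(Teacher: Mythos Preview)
Your proof is correct and is precisely the standard argument the paper defers to (citing \cite{Ha} and \cite{IY}) rather than writes out. In particular, your handling of the well-definedness step---using $(g_1-g_2)_{\ast}\delta=0$ together with the exactness of $\B(P,\Omega B^{\prime})\overset{-\circ i}{\longrightarrow}\B(\Omega B,\Omega B^{\prime})\overset{\delta^{\sharp}}{\longrightarrow}\EE(B,\Omega B^{\prime})$ from Fact~\ref{FactExact}---is exactly the right adaptation to the extriangulated setting, where one cannot appeal to monicity of the inflation $i^{\prime}$ as in the exact-category case.
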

\begin{proof}
This can be shown in the same way as in \cite[\S 2.2]{Ha} and \cite[Proposition 2.6]{IY}.
\end{proof}

\begin{rem}\label{8.00}
Since $H(\mathcal P)=0$, the functor $H\colon\B\to\underline{\h}$ induces a functor $\overline{H}\colon\oB\to\underline{\h}$ in a natural way.
Then we have a sequence of functors $\oB\overset{\Omega}{\longrightarrow}\oB\overset{\overline H}{\longrightarrow}\underline\h$. In particular, if $B,B^{\prime}\in\B$ satisfies $B\cong B^{\prime}$ in $\oB$, then naturally $H(B)\cong H(B^{\prime})$ holds in $\underline\h$.
\end{rem}

\begin{prop}\label{PropOExact}
Let $A\overset{x}{\ra}B\overset{y}{\ta}C$ be any conflation. Then for any choice of $\Omega C$, there exists $z\in\B(\Omega C,A)$ which makes
\begin{equation}\label{HEx}
H(\Omega C)\overset{H(z)}{\longrightarrow}H(A)\overset{H(x)}{\longrightarrow}H(B)\overset{H(y)}{\longrightarrow}H(C)
\end{equation}
exact in $\underline\h$.
\end{prop}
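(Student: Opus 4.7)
The plan is to reduce directly to Corollary \ref{CorKEx} by taking the defining $\EE$-triangle of $\Omega C$ as the ``top row'' and the given conflation as the ``bottom row''. The key observation is that the middle object in the top row is projective, so its image under $H$ vanishes, which is exactly the hypothesis needed for Corollary \ref{CorKEx}.

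Concretely, first I would fix an $\EE$-triangle $\Omega C\overset{a}{\longrightarrow}P\overset{p}{\longrightarrow}C\dashrightarrow$ with $P\in\mathcal{P}$ realizing the chosen $\Omega C$, and the given $\EE$-triangle $A\overset{x}{\longrightarrow}B\overset{y}{\longrightarrow}C\overset{\delta}{\dashrightarrow}$. Since $P$ is projective, $\EE(P,A)=0$, and hence the exact sequence
\[\B(P,B)\overset{y\circ-}{\longrightarrow}\B(P,C)\overset{\delta^{\sharp}}{\longrightarrow}\EE(P,A)\]
from Fact \ref{FactExact} shows that $p$ lifts to some $q\in\B(P,B)$ satisfying $yq=p$.

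This yields a commutative square formed by the right-hand halves of the two triangles, so by {\rm (ET3)$^{\mathrm{op}}$} there exists $z\in\B(\Omega C,A)$ making
\[\xymatrix{
\Omega C \ar[r]^{a}\ar[d]_{z} & P \ar[r]^{p}\ar[d]^{q} & C \ar@{=}[d]\ar@{-->}[r] & \\
A \ar[r]_{x} & B \ar[r]_{y} & C \ar@{-->}[r]_{\delta} &
}\]
a morphism of $\EE$-triangles. Finally, by Remark \ref{C3}(d) we have $P\in\mathcal{P}\subseteq\U$, and by Remark \ref{Rem_HUVzero} this gives $H(P)=0$. Corollary \ref{CorKEx} applied to this morphism of $\EE$-triangles then yields the required exact sequence $(\ref{HEx})$ in $\underline\h$.

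There is no real obstacle here; the only point that requires any care is selecting the correct variant of axiom {\rm (ET3)}, namely {\rm (ET3)$^{\mathrm{op}}$}, since what is given is a commutative square on the right-hand side (involving $q$ and $\id_C$) and what needs to be produced is the map $z$ on the left.
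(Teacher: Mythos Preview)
Your proof is correct and follows essentially the same approach as the paper: both construct a morphism of $\EE$-triangles from the defining triangle of $\Omega C$ to the given conflation using projectivity of $P$, then invoke Corollary \ref{CorKEx} with $H(P)=0$. You simply spell out in more detail how projectivity yields the morphism of triangles (via the lift $q$ and {\rm (ET3)$^{\mathrm{op}}$}), whereas the paper compresses this into the phrase ``since $P$ is projective''.
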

\begin{proof}
Let $A\overset{x}{\longrightarrow}B\overset{y}{\longrightarrow}C\dashrightarrow$ be an $\EE$-triangle, and let $\Omega C\overset{p}{\longrightarrow}P\to C\dashrightarrow$ be any $\EE$-triangle satisfying $P\in\mathcal P$. Since $P$ is projective, we obtain a morphism of $\EE$-triangles as follows.
\[
\xymatrix{
\Omega C \ar@{ >->}[r]^{} \ar[d]_z &P \ar@{->>}[r] \ar[d] &C \ar@{=}[d] \ar@{-->}[r]&\\
A \ar@{ >->}[r]_{x} &B \ar@{->>}[r]_{y} &C\ar@{-->}[r]&
}
\]
Thus Corollary \ref{CorKEx} shows the exactness of $(\ref{HEx})$.
\end{proof}

Since $\mathcal{P}\subseteq \U$, we have $\Omega\U=\CoCone(\mathcal{P},\U)\subseteq\B^-$ by Lemma \ref{CP1} applied to the case $\s=\U$. Especially $\Omega\C\subseteq\B^-$ holds for the coheart $\C$. In particular, any $X\in\Omega\C$ satisfies $H(X)\cong\sigma^+(X)$ in $\underline{\h}$ by Proposition \ref{8.6}. We define as follows.
\begin{defn}
We denote by $H(\Omega\C)\subseteq\underline{\h}$ the essential image of $\Omega\C$ by the functor $H$. As above, $X\in\h$ belongs to $H(\Omega\C)$ if and only if there exist an object $X^{\prime}\in\h$ with an isomorphism $X\cong X^{\prime}$ in $\underline{\h}$ and a reflection sequence
\begin{equation}\label{ReflXp}
\Omega C\ra X^{\prime}\ta U
\end{equation}
for some $C\in\C$ and $U\in\U$.
\end{defn}

\begin{lem}\label{LemOC}
$\Omega\C\subseteq\B$ has the following properties.
\begin{enumerate}
\item For any $\Omega C\in\Omega\C$ and $B\in\B$, a morphism $f\in\B(\Omega C,B)$ factors through an object in $\mathcal P$ if and only if it factors through one in $\mathcal\U$.
\item $\oB(\Omega\overline\C,\overline\U)=0$.
\item $(\Omega\C)\cap\U=\mathcal P$.
\item For any $\Omega C\in\Omega\C$ and any conflation $A\overset{f}{\ra}B\overset{g}{\ta}U$ with $U\in\U$,
\[ \oB(\Omega C,A)\overset{\overline f\circ-}{\longrightarrow}\oB(\Omega C,B)\to 0 \]
is exact.
\end{enumerate}
\end{lem}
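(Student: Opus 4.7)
The plan is to prove the four parts in order, using the $\EE$-triangle $\Omega C\rightarrowtail P\twoheadrightarrow C$ (with $P\in\mathcal P$, $C\in\C$) that defines $\Omega C$, together with the vanishing $\EE(\C,\U)=0$ that is built into the definition $\C=\U\cap{}^{\bot_1}\U$. Parts (2)--(4) all cascade out of (1).

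For (1), the ``only if'' direction is trivial since $\mathcal P\subseteq\U$. For the ``if'' direction, suppose $f=ba$ with $a\in\B(\Omega C,U')$, $b\in\B(U',B)$, $U'\in\U$. Applying Fact \ref{FactExact} to the defining $\EE$-triangle $\Omega C\overset{p}{\rightarrowtail}P\twoheadrightarrow C\dashrightarrow$ and using $\EE(C,U')=0$ yields an exact sequence $\B(P,U')\to\B(\Omega C,U')\to\EE(C,U')=0$, so $a=a'p$ for some $a'\in\B(P,U')$, giving $f=(ba')p$, which factors through $P\in\mathcal P$. Then (2) is an immediate specialization: any $f\in\B(\Omega C,U)$ with $U\in\U$ factors through $U$ itself (via $\id_U$), hence through $\mathcal P$ by (1), so $\overline f=0$ in $\oB$.

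For (3), the inclusion $\mathcal P\subseteq(\Omega\C)\cap\U$ uses that $0\in\C$ and $P\overset{\id}{\rightarrowtail}P\twoheadrightarrow 0$ is a conflation, combined with $\mathcal P\subseteq\U$. Conversely, if $X\in(\Omega\C)\cap\U$, then $\id_X\in\B(X,X)=\B(\Omega C,U)$ satisfies $\overline{\id_X}=0$ in $\oB$ by (2), which means $X$ is a direct summand of some object of $\mathcal P$, and closedness of $\mathcal P$ under summands gives $X\in\mathcal P$.

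Part (4) is the step most worth checking carefully, though it remains routine. Given $h\in\B(\Omega C,B)$, the composite $gh\in\B(\Omega C,U)$ factors through $U$ trivially, so by (1) there exist $P_0\in\mathcal P$ and morphisms $p_2\in\B(\Omega C,P_0)$, $p_1\in\B(P_0,U)$ with $gh=p_1p_2$. Projectivity of $P_0$ applied to the conflation $A\overset{f}{\rightarrowtail}B\overset{g}{\twoheadrightarrow}U$, via the exact sequence $\B(P_0,B)\to\B(P_0,U)\to\EE(P_0,A)=0$, produces $b\in\B(P_0,B)$ with $gb=p_1$. Then $g(h-bp_2)=0$, and Fact \ref{FactExact} gives $h'\in\B(\Omega C,A)$ with $h-bp_2=fh'$. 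Since $bp_2$ factors through $P_0\in\mathcal P$, we get $\overline h=\overline{f}\circ\overline{h'}$ in $\oB$, establishing the required surjectivity.
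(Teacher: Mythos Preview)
Your proof is correct and follows essentially the same approach as the paper. The only cosmetic difference is in (3): the paper argues directly that the defining conflation $\Omega C\rightarrowtail P\twoheadrightarrow C$ splits when $\Omega C\in\U$ (since $\EE(C,\U)=0$), while you route through (2) to conclude $\overline{\id_X}=0$; both arrive at $\Omega C$ being a summand of a projective.
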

\begin{proof}
{\rm (1)} follows from the existence of a conflation
\begin{equation}\label{ConfOCP}
\Omega C\rightarrowtail P\twoheadrightarrow C\quad(C\in\C)
\end{equation}
and $\EE(C,\U)=0$. {\rm (2)} immediately follows from {\rm (1)}. {\rm (3)} also follows from the existence of $(\ref{ConfOCP})$.

Let us show {\rm (4)}. Take any morphism $x\in\B(\Omega C,B)$. By {\rm (2)} we have $\overline{gx}=0$, and thus there exists $P\in\mathcal P$ and $p\in\B(\Omega C,P),q\in\B(P,U)$ satisfying $qp=gx$. Since $P$ is projective, there is $r\in\B(P,B)$ which gives $q=gr$. Then, since $g\circ(x-rp)=0$, we obtain $y\in\B(\Omega C,A)$ satisfing $fy=x-rp$. Thus it follows $\overline{x}=\overline{fy}$.
\end{proof}

\begin{prop}\label{HOCProj}
Any object $X\in H(\Omega \C)$ is projective in $\underline \h$.
\end{prop}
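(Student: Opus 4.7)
The plan is to pick a representative $Y \in \h$ with $X \cong Y$ in $\underline{\h}$ admitting a reflection sequence $\Omega C \overset{p}{\rightarrowtail} Y \overset{s}{\twoheadrightarrow} U_1$ with $C \in \C$, $U_1 \in \U$, and to show that $Y$ itself is projective in $\underline{\h}$. Given an arbitrary epimorphism $\underline{f} \colon A \to B$ in $\underline{\h}$ and a morphism $\underline{g} \colon Y \to B$, I aim to construct a lift $\underline{\tilde h} \colon Y \to A$ with $\underline{f\tilde h} = \underline{g}$. First I invoke Corollary \ref{CorEpim} to replace $B$ (up to isomorphism in $\underline\h$) by some $B' \in \h$ so that $\underline{f}$ is represented by the inflation of a conflation $A \overset{f'}{\rightarrowtail} B' \overset{g'}{\twoheadrightarrow} U_0$ with $U_0 \in \U$, and $\underline{g}$ is replaced by a morphism represented by some $a \in \B(Y, B')$. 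The crux is to lift $a p \in \B(\Omega C, B')$ along $f'$ modulo $\W$, that is, to produce $h \in \B(\Omega C, A)$ with $\underline{f'h} = \underline{ap}$ in $\underline{\B}(\Omega C, B')$.

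For this, I apply Lemma \ref{LemOC} (4) to the conflation $A \overset{f'}{\rightarrowtail} B' \overset{g'}{\twoheadrightarrow} U_0$, which yields $h \in \B(\Omega C, A)$ with $\overline{f'h} = \overline{ap}$ in $\overline{\B}$, i.e.\ $f'h - ap$ factors through some $P \in \mathcal P$, say as $f'h - ap = \psi \phi$ with $\phi \colon \Omega C \to P$ and $\psi \colon P \to B'$. The delicate step is to upgrade this factorization through $\mathcal P$ to one through $\W$: since $P \in \mathcal P \subseteq \U$ and $B' \in \h \subseteq \B^+$, Remark \ref{RemUBBT} ($\underline{\B}(\underline{\U}, \underline{\B}^+) = 0$) forces $\psi$ to factor through some object of $\W$, whence $\psi\phi$ does as well, and the equality $\underline{f'h} = \underline{ap}$ holds in $\underline{\B}(\Omega C, B')$. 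This passage from $\mathcal P$ to $\W$ is the main point, since $\mathcal P$ sits only in $\U$ and not in $\W$ in general.

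To finish, I apply the reflection bijections from Proposition \ref{PropFor+Adj} (2): since $A, B' \in \h \subseteq \B^+$, both maps $-\circ\underline{p}\colon \underline{\B}^+(Y, A) \to \underline{\B}(\Omega C, A)$ and $-\circ\underline{p}\colon \underline{\B}^+(Y, B') \to \underline{\B}(\Omega C, B')$ are bijective. Pulling $\underline{h}$ back through the first bijection yields a unique $\underline{\tilde h} \in \underline{\h}(Y, A)$ with $\underline{\tilde h\, p} = \underline{h}$, and then $\underline{f'\tilde h\, p} = \underline{f'h} = \underline{ap}$, so injectivity of the second bijection forces $\underline{f'\tilde h} = \underline{a}$ in $\underline{\h}(Y, B')$. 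Transporting back along the isomorphism $B \cong B'$ produces the required lift of $\underline{g}$, proving that $Y$, hence $X$, is projective in $\underline{\h}$.
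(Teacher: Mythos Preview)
Your proof is correct and follows essentially the same strategy as the paper: reduce to a reflection sequence $\Omega C\overset{p}{\rightarrowtail}Y\twoheadrightarrow U_1$ and a conflation $A\overset{f'}{\rightarrowtail}B'\twoheadrightarrow U_0$ via Corollary~\ref{CorEpim}, then lift along $f'$ using Lemma~\ref{LemOC}\,(4). The only cosmetic difference is in the final step: where you manually upgrade the equality $\overline{f'h}=\overline{ap}$ in $\overline{\B}$ to one in $\underline{\B}$ via Remark~\ref{RemUBBT} and then invoke the reflection bijection of Proposition~\ref{PropFor+Adj}\,(2), the paper simply applies the functor $\overline{H}\colon\overline{\B}\to\underline{\h}$ (Remark~\ref{8.00}) to the commuting square in $\overline{\B}$ and uses that $H(p)$ is an isomorphism---this packages your two steps into one.
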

\begin{proof}
Let us show the exactness of
\begin{equation}\label{EXA}
\underline\h(X,A)\overset{\underline f\circ-}{\longrightarrow}\underline\h(X,B)\to 0
\end{equation}
for any epimorphism $\underline f\in \underline\h(A,B)$. Replacing $X$ by an isomorphism in $\underline\h$, we may assume that there is a reflection sequence $\Omega C\overset{p}{\ra}X\ta U_X$ with $\Omega C\in\Omega\C$ and $U_X\in\U$. Similarly by Corollary \ref{CorEpim}, replacing $B$ by an isomorphism, we may assume the existence of a conflation $A\overset{f}{\ra}B\ta U$ with $U\in\U$.

Let $x\in\h(X,B)$ be any morphism. By Lemma \ref{LemOC} {\rm (4)}, we obtain $y\in\B(\Omega C,A)$ which makes
$$\xymatrix{
\Omega C \ar[r]^{\overline{p}} \ar[d]_{\overline{y}}&X \ar[d]^{\overline{x}}\\
A \ar[r]_{\overline{f}} &B
}$$
commutative in $\oB$. Applying $\overline{H}$ we obtain $H(f)H(y)=H(x)H(p)$, namely $\underline{f}\circ H(y)=\underline{x}\circ H(p)$ in $\underline\h$. Since $H(p)$ is an isomorphism, we obtain $\underline{x}=\underline{f}\circ (H(y)H(p)^{-1})$. This shows the exactness of $(\ref{EXA})$.
\end{proof}

\begin{lem}\label{LemHCoH}
For any $B\in\B$, the following are equivalent.
\begin{enumerate}
\item There exist $\Omega C\in\Omega\C$ and $d\in\B(\Omega C,B)$ which gives an epimorphism $H(d)\in\underline\h(H(\Omega C), H(B))$.
\item $B\in\CoCone(\K,\C)$.
\end{enumerate}
\end{lem}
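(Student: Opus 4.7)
The plan is to link $d:\Omega C\to B$ with a conflation witnessing $B\in\CoCone(\K,\C)$ through a single commutative square of $\EE$-triangles, and then to extract both implications by one application of Corollary \ref{CorKEx} combined with the criterion of Corollary \ref{8.14}. In both directions, the target diagram is
\[
\xymatrix{
\Omega C \ar[r] \ar[d]_d &P \ar[r] \ar[d] &C \ar@{=}[d] \ar@{-->}[r]^{\delta}& \\
B \ar[r] &K \ar[r] &C \ar@{-->}[r]_{d_\ast\delta}&
}
\]
where $\Omega C\to P\to C\dashrightarrow\delta$ comes from a fixed projective resolution of $C\in\C$ (so $P\in\mathcal P$).

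For $(2)\Rightarrow(1)$: start with a conflation $B\ra K\ta C$ with $K\in\K$ and $C\in\C$. Projectivity of $P$ gives, via Fact \ref{FactExact} and $\EE(P,B)=0$, a lift of $P\to C$ to a map $P\to K$; then {\rm (ET3)$^{\op}$} produces the leftmost vertical $d\in\B(\Omega C,B)$ turning the above square into a morphism of $\EE$-triangles. Applying Corollary \ref{CorKEx}, whose hypothesis is $H(P)=0$ (true because $P\in\mathcal P\subseteq\U$ and $H(\U)=0$ by Remark \ref{Rem_HUVzero}), yields the exact sequence
\[
H(\Omega C)\xrightarrow{H(d)} H(B)\to H(K)\to H(C)
\]
in $\underline\h$. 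Since $K\in\K$ gives $H(K)=0$ by Corollary \ref{8.14}, $H(d)$ is epimorphic.

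For $(1)\Rightarrow(2)$: conversely, given such a $d$, apply Proposition \ref{PBPO} to $\Omega C\to P\to C\dashrightarrow\delta$ and $d$: this constructs an $\EE$-triangle $B\ra K\ta C\dashrightarrow d_\ast\delta$ and simultaneously realizes the very same commutative square. Corollary \ref{CorKEx} applied again produces the exact sequence
\[
H(\Omega C)\xrightarrow{H(d)} H(B)\to H(K)\to H(C).
\]
Since $H(d)$ is epimorphic the map $H(B)\to H(K)$ is zero, and since $C\in\C\subseteq\U$ one has $H(C)=0$; exactness then forces $H(K)=0$. The converse direction of Corollary \ref{8.14} gives $K\in\add(\U\ast\V)=\K$, so the conflation $B\ra K\ta C$ witnesses $B\in\CoCone(\K,\C)$.

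No serious obstacle arises, since the two ingredients (the cohomological behaviour encoded in Corollary \ref{CorKEx} and the identification $\K=H^{-1}(0)$ of Corollary \ref{8.14}) are already prepared in the preceding sections. The only point deserving care is the use of Proposition \ref{PBPO} in the $(1)\Rightarrow(2)$ direction: one needs an actual $\EE$-triangle $B\ra K\ta C$ (not just a commutative square) in order to read the conclusion $B\in\CoCone(\K,\C)$, and the pushout construction of Proposition \ref{PBPO} is exactly what provides it.
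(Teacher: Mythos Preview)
Your proof is correct and follows essentially the same route as the paper. The paper's argument for $(2)\Rightarrow(1)$ simply cites Proposition~\ref{PropOExact} (together with Corollary~\ref{8.14}), which is precisely the construction you spell out by hand; for $(1)\Rightarrow(2)$ the paper realizes $d_{\ast}\rho$ to obtain the bottom $\EE$-triangle and then applies Corollary~\ref{CorKEx} exactly as you do, writing the resulting exact sequence as $H(\Omega C)\overset{H(d)}{\longrightarrow}H(B)\to H(Y)\to 0$ (the terminal $0$ being your observation $H(C)=0$).
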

\begin{proof}
$(2)\Rightarrow(1)$ follows from Corollary \ref{8.14} and Proposition \ref{PropOExact}. Let us show the converse. Take an $\EE$-triangle $\Omega C\to P\to C\overset{\rho}{\dashrightarrow}$ with $P\in\mathcal P$ and $C\in\C$. Then $d\in\B(\Omega C,B)$ induces a morphism of $\EE$-triangles as follows.
\[
\xymatrix{
\Omega C \ar[r]^{} \ar[d]_d &P \ar[r] \ar[d] &C \ar@{=}[d] \ar@{-->}[r]^{\rho}&\\
B \ar[r] &{}^{\exists}Y \ar[r] &C\ar@{-->}[r]_{d_{\ast}\rho}&
}
\]
By Corollary \ref{CorKEx}, $H(\Omega C)\overset{H(d)}{\longrightarrow}H(B)\to H(Y)\to 0$ becomes exact. Since $H(d)$ is an epimorphism, it follows $Y\in\K$.
\end{proof}

\begin{thm}\label{HCoH}
For any cotorsion pair $(\U,\V)$ on $(\B,\EE,\mathfrak{s})$, the following are equivalent.
\begin{enumerate}
\item Any object $B\in\underline\h$ admits an epimorphism $X\to B$ in $\underline\h$ from some $X\in H(\Omega\C)$. In particular by Proposition \ref{HOCProj}, $\underline{\h}$ has enough projectives.
\item $(\C,\K)$ is a cotorsion pair on $\B$.
\item $\B=\CoCone(\K,\C)$.
\item $\U\subseteq\Cone(\K,\C)$.
\end{enumerate}
Moreover in this case, the subcategory of projectives in $\underline\h$ agrees with $\add(H(\Omega\C))$. Here, $\add$ denotes the closure under taking direct summands in $\underline\h$.
\end{thm}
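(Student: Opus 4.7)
The plan is to prove the equivalences by establishing $(2)\Leftrightarrow(3)$, $(3)\Leftrightarrow(4)$, and $(4)\Leftrightarrow(1)$, then deduce the moreover statement. For $(2)\Leftrightarrow(3)$, I apply Proposition \ref{approxi} to the pair $(\C,\K)$: one checks $\mathcal P\subseteq\C$ (immediate from $\mathcal P\subseteq\U$ and projectivity), extension-closure of $\C$ (a direct diagram chase using extension-closure of $\U$ and the long exact sequence for $\EE(-,U')$, $U'\in\U$), and $\EE(\C,\K)=0$ (for any $Y\in\K=\add(\U\ast\V)$, the vanishings $\EE(\C,\U)=\EE(\C,\V)=0$ and a long exact sequence give $\EE(\C,Y)=0$). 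Thus $(\C,\K)$ is a cotorsion pair iff $\B=\CoCone(\K,\C)$. The implication $(3)\Rightarrow(4)$ is immediate, since the cotorsion pair $(\C,\K)$ then also satisfies $\B=\Cone(\K,\C)\supseteq\U$. For $(4)\Rightarrow(3)$, take the $(\U,\V)$-approximation $B\ra V^B\ta U^B$, apply $(4)$ to produce $K'\ra C'\ta U^B$, and combine via Fact \ref{FactNP}(1) (common right term $U^B$): the resulting $3\times3$ diagram yields a conflation $B\ra M\ta C'$ in the middle row and $K'\ra M\ta V^B$ in the middle column, with $M\in\K$ by extension-closure of $\K$ (Corollary \ref{8.14}) since $V^B\in\V\subseteq\K$. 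The direction $(4)\Rightarrow(1)$ is then automatic: for $B\in\h$ the above gives $B\in\CoCone(\K,\C)$, and Lemma \ref{LemHCoH} upgrades this to an epimorphism $H(\Omega C)\to H(B)\cong B$ in $\underline{\h}$.

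The main obstacle is $(1)\Rightarrow(4)$. Fix $U\in\U$ and a conflation $\Omega U\ra P_U\ta U$ with $P_U\in\mathcal P$; Lemma \ref{CP1}(b) applied with $\s=\U$ gives $\Omega U\in\B^-$, whence $H(\Omega U)=\sigma^+(\Omega U)\in\h$. By $(1)$ there is an epimorphism $\underline f\colon H(\Omega C')\to H(\Omega U)$ in $\underline{\h}$ for some $C'\in\C$. The crucial step is to realize $\underline f$ as $H(d)$ for some $d\colon\Omega C'\to\Omega U$ in $\B$, so that Lemma \ref{LemHCoH} applied to $\Omega U$ itself produces a $\CoCone(\K,\C)$-decomposition of $\Omega U$. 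Picking a representative $\tilde f$ in $\B$, set $e:=\tilde f\circ p_{\Omega C'}\colon\Omega C'\to H(\Omega U)$ and let $q\colon H(\Omega U)\ta S'$ be the deflation of the reflection sequence of $\Omega U$. By Lemma \ref{LemOC}(1) the composition $qe\colon\Omega C'\to S'$, with $S'\in\U$, factors through some $P_0\in\mathcal P$ as $\alpha\beta$; projectivity of $P_0$ lifts $\alpha$ through $q$ to some $\gamma\colon P_0\to H(\Omega U)$, so $q(e-\gamma\beta)=0$. By exactness at $H(\Omega U)$ of the long sequence attached to $\Omega U\ra H(\Omega U)\ta S'$, there exists $d\colon\Omega C'\to\Omega U$ with $p_{\Omega U}d=e-\gamma\beta$. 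Since $\gamma\beta$ factors through $\mathcal P$, one has $\overline{p_{\Omega U}d}=\overline e$ in $\overline\B$, and applying the induced functor $\overline H\colon\overline\B\to\underline{\h}$ (Remark \ref{8.00}) gives $H(p_{\Omega U})\cdot H(d)=\underline f\cdot H(p_{\Omega C'})$. Lemma \ref{LemForHeartEq}, applied with $(\U',\V')=(\U,\V)$ itself (valid because $H(\U)=0$ by Remark \ref{Rem_HUVzero}), tells us that both $H(p_{\Omega U})$ and $H(p_{\Omega C'})$ are isomorphisms, so $H(d)$ is an epimorphism. Lemma \ref{LemHCoH} then produces a conflation $\Omega U\ra K''\ta C''$ with $K''\in\K$ and $C''\in\C$. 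Finally, Fact \ref{FactNP}(2) applied to $\Omega U\ra P_U\ta U$ and $\Omega U\ra K''\ta C''$ with common left term yields a $3\times3$ diagram whose middle column is $P_U\ra M\ta C''$ (so $M\in\C$ by extension-closure of $\C$) and whose middle row is $K''\ra M\ta U$, witnessing $U\in\Cone(\K,\C)$.

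For the moreover claim, Proposition \ref{HOCProj} together with closure under direct summands in $\underline{\h}$ yields that every object of $\add(H(\Omega\C))$ is projective in $\underline{\h}$. Conversely, any projective $P\in\underline{\h}$ admits an epimorphism from some $X\in H(\Omega\C)$ by $(1)$; projectivity of $P$ splits this epimorphism, exhibiting $P$ as a direct summand of $X$, whence $P\in\add(H(\Omega\C))$.
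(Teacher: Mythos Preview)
Your proof is correct and follows essentially the same strategy as the paper, with only minor organizational differences. The paper proves $(1)\Rightarrow(3)$ directly by applying Lemma~\ref{LemHCoH} to an arbitrary $B\in\B$ (after passing to $(B^+)^-\in\h$ via reflection and coreflection, then lifting the epimorphism back to $\B$ via Lemma~\ref{LemOC}(4)), whereas you prove $(1)\Rightarrow(4)$ by applying Lemma~\ref{LemHCoH} to $\Omega U$ for $U\in\U$ and then need the additional pushout step with Fact~\ref{FactNP}(2) to pass from $\Omega U\in\CoCone(\K,\C)$ to $U\in\Cone(\K,\C)$. Your lifting argument (factoring $qe$ through $\mathcal P$ and correcting) is exactly a hands-on reproof of Lemma~\ref{LemOC}(4) applied to the reflection sequence $\Omega U\ra H(\Omega U)\ta S'$; you could have cited that lemma directly. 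Likewise, your verification of $\EE(\C,\K)=0$ in $(2)\Leftrightarrow(3)$ is unnecessary since $\C={}^{\bot_1}\K$ by definition, though it does no harm. The $(4)\Rightarrow(3)$ argument and the moreover clause match the paper verbatim.
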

\begin{proof}
The latter part is obvious, since any projective object come to have epimorphism from some object in $H(\Omega\C)$, which necessarily splits.
$(3)\Rightarrow(1)$ follows immediately from Lemma \ref{LemHCoH} applied to each $B\in\h$.
$(2)\Leftrightarrow(3)$ follows from $\C={}^{\bot_1}\K$ and Proposition \ref{approxi}.
$(2)\Rightarrow(4)$ is trivial.

$(1)\Rightarrow(3)$
Let $B\in\B$ be any object. Take a reflection sequence
\[ B\overset{p_B}{\ra}B^+\ta U\quad(B^+\in\B^+,U\in\U) \]
and a coreflection sequence
\[ V\ra (B^+)^-\overset{m}{\ta}B^+\quad((B^+)^-\in\h,V\in\V). \]
By assumption, there exist $X\in H(\Omega\C)$ and an epimorphism $\underline x\colon X\to(B^+)^-$ in $\underline\h$. Replacing $X$ by an isomorphism in $\underline\h$, we may assume the existence of a reflection sequence
\[ \Omega C\overset{p}{\ra}X\ta U\quad(U\in\U, C\in\C). \]
By Lemma \ref{LemOC} {\rm (4)}, there is a morphism $d\in\B(\Omega C,B)$ which makes
$$\xymatrix{
\Omega C \ar[r]^{\overline{p}} \ar[d]_{\overline{d}}&X \ar[d]^{\overline{mx}}\\
B \ar[r]_{\overline{p_B}} &B^+
}$$
commutative in $\oB$. Applying $\overline{H}$, we obtain $H(p_B)H(d)=H(m)H(x)H(p)$.
Since $H(p_B),H(m),H(p)$ are isomorphisms and $H(x)=\underline{x}$ is an epimorphism, it follows that $H(d)\colon H(\Omega C)\to H(B)$ becomes an epimorphism. Thus Lemma \ref{LemHCoH} shows $B\in\CoCone(\K,\C)$.

$(4)\Rightarrow(3)$ For any $B\in\B$, take a conflation $B\ra V^B\ta U^B$ with $U^B\in\U,V^B\in\V$. By assumption, there is a conflation $K\ra C\ta U^B$ with $K\in\K,C\in\C$. By Fact \ref{FactNP}, we obtain a commutative diagram made of conflations as follows.
$$\xymatrix{
&K \ar@{=}[r] \ar@{ >->}[d] &K \ar@{ >->}[d]\\
B \ar@{=}[d] \ar@{ >->}[r] &{}^{\exists}M \ar@{->>}[r] \ar@{->>}[d] &C \ar@{->>}[d]\\
B \ar@{ >->}[r] &V^B \ar@{->>}[r] &U^B
}
$$
Theorem \ref{8.11} and Corollary \ref{8.14} shows $H(M)=0$, and thus $M\in\K$.
\end{proof}

\begin{cor}\label{CorHCoH}
If $(\U,\V)$ is a rigid cotorsion pair, then the heart $\underline\h$ has enough projectives.
\end{cor}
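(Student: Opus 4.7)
The plan is to reduce this immediately to Theorem \ref{HCoH}. The key observation is that rigidity forces the coheart $\C$ and the kernel $\K$ to coincide with $\U$ and $\V$ respectively, so condition (2) of Theorem \ref{HCoH} is automatic.

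First, I would invoke Remark \ref{RemRigid} (2), which already states that for a rigid cotorsion pair $(\U,\V)$ one has $(\C,\K)=(\U,\V)$, so in particular $(\C,\K)$ is a cotorsion pair. To make the argument self-contained, note that $\U\subseteq\V$ gives $\EE(\U,\U)\subseteq\EE(\U,\V)=0$, hence $\U\subseteq{}^{\bot_1}\U$, so $\C=\U\cap{}^{\bot_1}\U=\U$; symmetrically, since every object in $\U\ast\V$ is an extension of an object in $\V$ by one in $\U\subseteq\V$ (and $\V$ is extension-closed), one sees $\K=\add(\U\ast\V)=\V$.

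Second, having identified $(\C,\K)$ with the cotorsion pair $(\U,\V)$, condition (2) of Theorem \ref{HCoH} holds. Applying that theorem, I conclude that $\underline{\h}$ has enough projectives, with projective objects given by $\add(H(\Omega\C))=\add(H(\Omega\U))$.

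There is no real obstacle here; the work has already been done in the preceding theorem, and the only thing to verify is the trivial fact that rigidity implies $\C=\U$ and $\K=\V$, which is exactly the content of Remark \ref{RemRigid} (2).

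\begin{proof}
By Remark \ref{RemRigid} (2), since $(\U,\V)$ is rigid we have $(\C,\K)=(\U,\V)$, so $(\C,\K)$ is in particular a cotorsion pair. Thus condition (2) of Theorem \ref{HCoH} is satisfied, and hence $\underline{\h}$ has enough projectives.
\end{proof}
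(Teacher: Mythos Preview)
Your proof is correct and follows exactly the same approach as the paper: invoke Remark \ref{RemRigid} (2) to get $(\C,\K)=(\U,\V)$, then apply Theorem \ref{HCoH}. The additional unpacking you provide of why $\C=\U$ and $\K=\V$ is accurate and harmless, though the paper simply cites the remark without repeating its content.
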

\begin{proof}
This immediately follows from Remark \ref{RemRigid} {\rm (2)} and Theorem \ref{HCoH}.
\end{proof}

\begin{rem}
We may also show Corollary \ref{CorHCoH} directly, specializing the argument so far to rigid cotorsion pairs. Then in the way around, Corollary \ref{CorHCoH} and Proposition \ref{Prop_HeartEq} will show that whenever $(\C,\K)$ is a cotorsion pair (which is necessarily rigid by Remark \ref{RemRigid} {\rm (1)}), the heart $\underline\h$ of $(\U,\V)$ has enough projectives. We emphasize here that Theorem \ref{HCoH} is also giving its converse, a characterization for $(\C,\K)$ to be a cotorsion pair.
\end{rem}

\subsection{Equivalence with the category of coherent functors}

The following fact gives the desired equivalence between the heart and the functor category. For example, see \cite[Corollaries 3.9, 3.10]{Be} for the detail.%\comm{$\langle\langle$Comment A: Do you have any better citation e.g. by Auslander or Auslander-Reiten? I guess that Beligiannis should be much later than them. $\rangle\rangle$}
\begin{fact}\label{FactForCor}
If an abelian category $\mathcal A$ has enough projectives, then there is an equivalence $\mathcal A \simeq \mod (\mathrm{Proj} \mathcal A)$. Here, $\mod (\mathrm{Proj} \mathcal A)$ denotes the category of coherent functors over the category of projectives $\mathrm{Proj} \mathcal A$.
\end{fact}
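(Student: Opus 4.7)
The plan is to construct the equivalence explicitly as the restricted Yoneda functor $\Phi\colon\mathcal A\to\mod(\mathrm{Proj}\mathcal A)$ defined by $\Phi(A)=\mathcal A(-,A)|_{\mathrm{Proj}\mathcal A}$, and then verify that it is essentially surjective and fully faithful. As a preparatory remark, the target $\mod(\mathrm{Proj}\mathcal A)$ is indeed abelian because $\mathrm{Proj}\mathcal A$ admits weak kernels: given any $P_1\to P_0$ between projectives, the actual kernel $K$ in $\mathcal A$ can be covered by some projective $P_2\twoheadrightarrow K$, and the composite $P_2\to P_0$ is a weak kernel in $\mathrm{Proj}\mathcal A$.

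First, I would check that $\Phi$ lands in $\mod(\mathrm{Proj}\mathcal A)$. For $A\in\mathcal A$, choose a projective presentation $P_1\overset{f}{\to}P_0\to A\to 0$. Applying $\mathcal A(Q,-)$ for $Q$ projective yields an exact sequence $\mathcal A(Q,P_1)\to\mathcal A(Q,P_0)\to\mathcal A(Q,A)\to 0$, because projectivity of $Q$ makes $\mathcal A(Q,-)$ exact on short exact sequences and hence forces the tail morphism here to be surjective. This exhibits $\Phi(A)$ as the cokernel of $\Phi(f)$ in the functor category, hence as a coherent functor.

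Next, for fully faithfulness, fix $A,B\in\mathcal A$ together with the presentation above. Applying $\Hom_{\mod(\mathrm{Proj}\mathcal A)}(-,\Phi(B))$ to the right exact sequence $\Phi(P_1)\to\Phi(P_0)\to\Phi(A)\to 0$ produces
\[ 0\to\Hom(\Phi(A),\Phi(B))\to\Hom(\Phi(P_0),\Phi(B))\to\Hom(\Phi(P_1),\Phi(B)). \]
By the ordinary Yoneda lemma, $\Hom(\Phi(P_i),\Phi(B))\cong\mathcal A(P_i,B)$, and the right-hand map is identified with pre-composition by $f$. Its kernel is $\mathcal A(A,B)$ by the left-exactness of $\mathcal A(-,B)$ on $P_1\overset{f}{\to}P_0\to A\to 0$, and tracing through shows the resulting isomorphism is exactly $\Phi$ on hom-sets. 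For essential surjectivity, take $F\in\mod(\mathrm{Proj}\mathcal A)$ with a presentation $\Phi(P_1)\to\Phi(P_0)\to F\to 0$, realize the first arrow as $\Phi(g)$ for a unique $g\colon P_1\to P_0$ (again by Yoneda), and set $A=\mathrm{Coker}(g)\in\mathcal A$; the first step then gives $\Phi(A)\cong F$.

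The main obstacle, and the place where the hypothesis of enough projectives really does the work, is the first step together with the preparatory remark: one needs both the $\Hom$-exactness coming from projectives and the existence of weak kernels in $\mathrm{Proj}\mathcal A$ in order for $\Phi$ to have the right target category and for $\mod(\mathrm{Proj}\mathcal A)$ to be abelian in the first place. After that, both essential surjectivity and fully faithfulness reduce to Yoneda-type computations applied to projective presentations.
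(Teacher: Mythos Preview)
Your argument is the standard one and is essentially correct. The paper itself does not prove this Fact at all; it merely cites \cite[Corollaries 3.9, 3.10]{Be}, so your write-up supplies strictly more detail than the source. The restricted Yoneda functor together with projective presentations is exactly the route taken in the cited reference, so there is no methodological divergence to compare.

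One small slip: in your preparatory remark on weak kernels, the weak kernel of $f\colon P_1\to P_0$ is the composite $P_2\to K\hookrightarrow P_1$, not ``the composite $P_2\to P_0$'' (which is zero). With that corrected, the rest goes through as written.
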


\begin{lem}\label{eq}
The sequence of functors
\begin{equation}
\overline\C\overset{\Omega}{\longrightarrow}\Omega\overline\C\overset{\overline H}{\longrightarrow}H(\Omega\C),
\end{equation}
which is obtained by restricting $\oB\overset{\Omega}{\longrightarrow}\oB\overset{\overline H}{\longrightarrow}\underline\h$ in Remark \ref{8.00} onto $\overline\C$, give equivalences of categories $\overline\C\overset{\simeq}{\longrightarrow}\Omega\overline\C\overset{\simeq}{\longrightarrow}H(\Omega\C)$.
\end{lem}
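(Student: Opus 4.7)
The plan is to prove separately that $\Omega$ and $\overline{H}$ are equivalences. Both are essentially surjective by construction, so the real work lies in fully-faithfulness.

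For $\Omega\colon \overline\C\to \Omega\overline\C$, I would apply Fact \ref{FactExact} to the $\EE$-triangles $\Omega C\ra P\ta C$ and $\Omega C'\ra P'\ta C'$ with $P,P'\in\mathcal P$. Since $\EE(P,\Omega C')=0$ by projectivity, and $\EE(C,P')=0$ because $C\in\C=\U\cap{^{\bot_1}}\U$ and $P'\in\mathcal P\subseteq\U$, the two long exact sequences yield natural isomorphisms $\oB(\Omega C,\Omega C')\cong\EE(C,\Omega C')\cong\oB(C,C')$. Checking that ``factoring through $\mathcal P$'' coincides with ``factoring through the chosen $P$ (resp.\ $P'$)'' is a short lifting argument: any $\Omega C\to Q$ with $Q\in\mathcal P$ extends to $P\to Q$ since $\EE(C,Q)=0$, and dually for the other side. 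Under these identifications the functor $\Omega$ becomes the identity on $\EE(C,\Omega C')$, because the definition of a morphism of $\EE$-extensions (Definition \ref{DefMorphExt}) reads exactly $(\Omega f)_{\ast}\delta=f^{\ast}\delta'$.

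For $\overline H\colon \Omega\overline\C\to H(\Omega\C)$, I would fix reflection sequences $\Omega C\overset{p}{\ra}X\ta U$ and $\Omega C'\overset{p'}{\ra}X'\ta U'$ from Definition \ref{Cre}, so that $X,X'\in\h$ represent $H(\Omega C),H(\Omega C')$ in $\underline\h$. Proposition \ref{PropFor+Adj} gives $\underline\h(X,X')=\uB^+(X,X')\cong\uB(\Omega C,X')$. Next I would show $\uB(\Omega C,X')=\oB(\Omega C,X')$ by combining Lemma \ref{LemOC}(1), which makes ``factor through $\mathcal P$'' equivalent to ``factor through $\U$'' for source $\Omega C$, with the observation that any $v\colon U_0\to X'$ with $U_0\in\U$ refactors through some $W\in\W$ via $X'\in\B^+$ and $\EE(\U,\V)=0$. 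Applying the Ext-computation of the previous paragraph yields $\oB(\Omega C,X')\cong\EE(C,X')$, and chasing definitions identifies the morphism-map of $\overline H$ with $p'_{\ast}\colon\EE(C,\Omega C')\to\EE(C,X')$.

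Thus the crux, and the main obstacle, will be proving that $p'_{\ast}$ is bijective. Applying $\EE(C,-)$ to $\Omega C'\overset{p'}{\ra}X'\ta U'$ produces a six-term exact sequence terminating in $\EE(C,U')=0$ (since $U'\in\U$), which yields surjectivity. The kernel of $p'_{\ast}$ is the image of the connecting map $\partial'\colon\B(C,U')\to\EE(C,\Omega C')$, $r\mapsto r^{\ast}\theta$, where $\theta\in\EE(U',\Omega C')$ is the class of the reflection sequence. The decisive point is that by compatibility (i) of (ET4) as used in Definition \ref{Cre}, one has $\theta=(U_{\Omega C'}\to\Omega C')_{\ast}\mu$ for some $\mu\in\EE(U',U_{\Omega C'})$, where $U_{\Omega C'}\in\U$ is the object appearing in the ET4-diagram of Definition \ref{Cre}. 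Therefore $r^{\ast}\theta=(U_{\Omega C'}\to\Omega C')_{\ast}(r^{\ast}\mu)$ vanishes because $r^{\ast}\mu\in\EE(C,U_{\Omega C'})=0$, again by $C\in{^{\bot_1}}\U$. This forces $\partial'=0$ and finishes the proof.
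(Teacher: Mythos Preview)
Your argument is correct. Both functors are handled soundly: the identification of $\Omega$ with the identity on $\EE(C,\Omega C')$ via the equation $(\Omega f)_{\ast}\rho=f^{\ast}\rho'$ is exactly right, and for $\overline H$ your key computation that the connecting map $\partial'$ vanishes---because the class $\theta$ of the reflection sequence from Definition~\ref{Cre} factors as $(u_{\Omega C'})_{\ast}\mu$ with $\mu\in\EE(U',U_{\Omega C'})$ and $\EE(C,U_{\Omega C'})=0$---is valid and is the heart of the matter.

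The paper's route is noticeably shorter and avoids the passage through $\EE$-groups entirely. For faithfulness of $\overline H$ it simply invokes Proposition~\ref{8.2}(c) (which says $\sigma^+(\underline f)=0$ iff $f$ factors through $\U$) together with Lemma~\ref{LemOC}(1); for fullness it uses Lemma~\ref{LemOC}(4) to lift a morphism in $\underline\h$ directly back to $\oB(\Omega C,\Omega C')$. Similarly, for $\Omega$ the paper argues faithfulness and fullness separately by one-line diagram chases using $\EE(C,\mathcal P)=0$, rather than setting up the isomorphisms $\oB(\Omega C,\Omega C')\cong\EE(C,\Omega C')\cong\oB(C,C')$. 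What your approach buys is a more uniform, structural picture: both functors become visibly the identity (resp.\ $p'_{\ast}$) on a common $\EE$-group, and the fully-faithful statements become a single bijectivity claim. The paper's approach buys brevity and avoids having to unpack the {\rm (ET4)} compatibility in Definition~\ref{Cre}. One minor remark: in your step $\uB(\Omega C,X')=\oB(\Omega C,X')$, the refactoring of $U_0\to X'$ through $\W$ is not actually needed, since $\W\subseteq\U$ and Lemma~\ref{LemOC}(1) already give that factoring through $\W$ implies factoring through $\mathcal P$; the converse is trivial from $\mathcal P\subseteq\W$.
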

\begin{proof}
Both functors are essentially surjective by definition. It suffices to show that they are fully faithful.

\medskip

\noindent {\bf {\rm 1.} Faithfulness of $\overline\C\overset{\Omega}{\longrightarrow}\Omega\overline\C$}.
Suppse that $\overline f\in\overline\C(C,C^{\prime})$ satisfies $\Omega\overline f=0$. By definition, $\Omega\overline f=\overline g$ is given by a morphism of $\EE$-triangles
\begin{equation}\label{OOO}
\xymatrix
{\Omega C \ar[r]^q \ar[d]_g &P \ar[r]^p \ar[d] &C \ar[d]^f \ar@{-->}[r]^{\rho}& \\
\Omega C^{\prime} \ar[r]_{q^{\prime}}  &P^{\prime} \ar[r]_{p^{\prime}} &C^{\prime} \ar@{-->}[r]_{\rho^{\prime}}&
}
\end{equation}
where the two rows are $\EE$-triangles satisfying $P,P^{\prime}\in\mathcal P$.
If $\overline g=0$, then $g$ factors through an projective object in $\mathcal P$. Since $\EE(C,\mathcal P)=0$, this implies $f^{\ast}\rho^{\prime}=g_{\ast}\rho=0$. Thus $f$ factors through $p^{\prime}$.

\medskip

\noindent {\bf {\rm 2.} Fullness of $\overline\C\overset{\Omega}{\longrightarrow}\Omega\overline\C$}.
Let $\overline g\in(\Omega\overline\C)(\Omega C,\Omega C^{\prime})$ be any morphism. Then by $\EE(C,\mathcal P)=0$, we obtain a morphism of $\EE$-triangles as in $(\ref{OOO})$, with some $f\in\C(C,C^{\prime})$. This gives $\Omega\overline f=\overline g$.

\medskip

\noindent {\bf {\rm 3.} Faithfulness of $\Omega\overline\C\overset{\overline H}{\longrightarrow}H(\Omega\C)$}.
This follows from Proposition \ref{8.2} and Lemma \ref{LemOC} {\rm (1)}.

\medskip

\noindent {\bf {\rm 4.} Fullness of $\Omega\overline\C\overset{\overline H}{\longrightarrow}H(\Omega\C)$}.
Let $X,X^{\prime}\in H(\Omega\C)$ be any pair of objects, and let $\underline g\in \underline\h(X,X^{\prime})$ be any morphism. Replacing by isomorphisms in $\underline\h$, we may assume there exist conflations
\[ \Omega C\overset{p}{\ra}X\ta C,\quad \Omega C^{\prime}\overset{p^{\prime}}{\ra}X^{\prime}\ta C^{\prime}\qquad(C,C^{\prime}\in\C). \]
By Lemma \ref{LemOC} {\rm (4)}, there is $f\in\B(\Omega C,\Omega C^{\prime})$ which satisfies $\overline{p^{\prime}f}=\overline{gp}$. Applying $\overline H$, we obtain $H(p^{\prime})H(f)=H(g)H(p)=\underline{g}\circ H(p)$. This shows the fullness of $\overline\C\overset{\overline H}{\longrightarrow}H(\Omega\C)$.
\end{proof}

\begin{prop}\label{Propmod}
If $(\C,\K)$ is a cotorsion pair, then the heart $\underline\h$ of $(\U,\V)$ satisfies $\underline \h \simeq \mod (\C / \mathcal P)$.
\end{prop}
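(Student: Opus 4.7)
The plan is to assemble the statement directly from the results already in place: abelianness of the heart, the description of its projectives, the equivalence with a coherent functor category, and the identification of the coheart with $H(\Omega\C)$.

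First I would invoke Theorem~\ref{HCoH}. Since $(\C,\K)$ is assumed to be a cotorsion pair, condition~(2) of that theorem holds, so the heart $\underline\h$ has enough projectives, and moreover the subcategory of projective objects of $\underline\h$ equals $\add(H(\Omega\C))$. Combined with Theorem~\ref{C18}, which ensures $\underline\h$ is abelian, this puts us in a position to apply Fact~\ref{FactForCor}, yielding an equivalence
\[
\underline\h \;\simeq\; \mod(\mathrm{Proj}\,\underline\h) \;=\; \mod(\add(H(\Omega\C))).
\]

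Next I would reduce the argument of $\mod$ from $\add(H(\Omega\C))$ to $H(\Omega\C)$. This is a standard and routine observation: for any additive category $\mathcal D$, the categories of coherent (equivalently, finitely presented) functors on $\mathcal D$ and on $\add\mathcal D$ are canonically equivalent, since every object of $\add\mathcal D$ is a direct summand of a finite direct sum of objects of $\mathcal D$, so representables on $\add\mathcal D$ and on $\mathcal D$ generate the same finitely presented functors. Hence
\[
\mod(\add(H(\Omega\C))) \;\simeq\; \mod(H(\Omega\C)).
\]

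Finally I would apply Lemma~\ref{eq}, which gives equivalences $\overline\C \overset{\simeq}{\longrightarrow} \Omega\overline\C \overset{\simeq}{\longrightarrow} H(\Omega\C)$. Composing and passing to coherent functors yields
\[
\mod(H(\Omega\C)) \;\simeq\; \mod(\overline\C) \;=\; \mod(\C/\mathcal P),
\]
and chaining the three equivalences gives the desired $\underline\h \simeq \mod(\C/\mathcal P)$. There is no real obstacle here; the only delicate point is making the passage from $\add(H(\Omega\C))$ to $H(\Omega\C)$ explicit, but this is formal. All substantial work has already been done in Theorems~\ref{C18} and~\ref{HCoH} and in Lemma~\ref{eq}.
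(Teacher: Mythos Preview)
Your proof is correct and follows essentially the same approach as the paper: combine Theorem~\ref{HCoH}, Fact~\ref{FactForCor}, the standard equivalence $\mod(\mathcal D)\simeq\mod(\add\mathcal D)$, and Lemma~\ref{eq}. The only difference is that you make the appeal to Theorem~\ref{C18} explicit, which the paper leaves implicit in its use of Fact~\ref{FactForCor}.
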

\begin{proof}
Since the inclusion $H(\Omega\C)\hookrightarrow H(\Omega\C)$ induces an equivalence $\mod(H(\Omega\C))\overset{\simeq}{\longrightarrow} \mod(\add (H(\Omega\C)))$, this follows from Lemma \ref{eq}, Theorem \ref{HCoH} and Fact \ref{FactForCor}.
\end{proof}

\section{Relation with $n$-cluster tilting subcategories}\label{Section_nCluster}

In a triangulated category, if $\mathcal N$ is a cluster tilting subcategory, then $(\mathcal N,\mathcal N)$ is a cotorsion pair with the coheart $\mathcal N$ and the heart $\T/\mathcal N$. We have an equivalence $\T/\mathcal N\simeq \mod {\mathcal N} \simeq \mod(\mathcal N[-1])$ (see \cite[Corollary 4.4]{KZ}).
Fix an integer $n\ge 2$ throughout this section.
We also have a similar result for any $n$-cluster tilting subcategory. If $\mathcal N$ is an $n$-cluster tilting subcategory, then by \cite[Theorem 3.1]{IY}, we have cotorsion pairs $(\U_{\ell},\V_{\ell})=(\mathcal N\ast\mathcal N[1]\ast\cdot\cdot\cdot\ast\mathcal N[\ell-1], \mathcal N[\ell-1]\ast\mathcal N[\ell]\ast\cdot\cdot\cdot\ast\mathcal N[n-2])$ for $0<\ell<n$. Since the coheart of $(\U_{\ell},\V_{\ell})$ is $\C_{\ell}={\mathcal N}$, its heart $\underline \h_{\ell}$ becomes equivalent to $\mod{\mathcal N}$. In particular, the hearts of these cotorsion pairs $(\U_{\ell},\V_{\ell})$ are equivalent.

In this section we will generalize this to an extriangulated category with enough projectives and injectives. We define $n$-cluster tilting subcategory $\M$ in $\B$ and show how cotorsion pairs are induced from $\M$.

Assume that $(\B,\EE,\mathfrak{s})$ has enough projectives and injectives, throughout this section.
\subsection{Higher extensions}

For a subcategory $\B_1\subseteq\B$, put $\Omega^{0}\B_1=\B_1$, and define $\Omega^i\B_1$ for $i>0$ inductively by
\[ \Omega^i\B_1=\Omega(\Omega^{i-1}\B_1)=\CoCone(\mathcal{P},\Omega^{i-1}\B_1). \]
We call $\Omega^{i}\B_1$ the {\it $i$-th syzygy} of $\B_1$. Dually we define the {\it $i$-th cosyzygy} $\Sigma^i\B_1$ by $\Sigma^0\B_1=\B_1$ and $\Sigma^i\B_1=\Cone(\Sigma^{i-1}\B_1,\mathcal{I})$ for $i>0$.

Let $X$ be any object in $\B$. It admits an $\EE$-triangle
\[ X\to I^0\to \Sigma X \overset{\delta^X}{\dashrightarrow}\quad (\text{ resp. } \Omega X\to P_0\to X \overset{\delta_X}{\dashrightarrow}), \]
where $I^0\in \mathcal I$ (resp. $P_0\in \mathcal P$). We can get $\EE$-triangles
\[ \Sigma^iX\to I^i\to \Sigma^{i+1}X \overset{\delta^{\Sigma^iX}}{\dashrightarrow} (\text{ resp. } \Omega^{i+1}\to P_i\to \Omega^iX \overset{\delta_{\Omega^iX}}{\dashrightarrow}), \]
for $i>0$ recursively.

\begin{lem}\label{extn}
For any $A,X\in\B$ and $\EE$-triangles
\[ A\to I^A\overset{i}{\longrightarrow}\Sigma A\overset{\iota^A}{\dashrightarrow},\quad \Omega X\overset{p}{\longrightarrow}P\to X\overset{\rho}{\dashrightarrow} \]
satisfying $I^A\in\mathcal{I}$ and $P\in\mathcal{P}$, there is an isomorphism $\varphi_{X,A}\colon\EE(\Omega X,A)\overset{\cong}{\longrightarrow}\EE(X,\Sigma A)$.
Moreover if $B\to I^B\overset{i}{\longrightarrow}\Sigma B\overset{\iota^B}{\dashrightarrow}$ is also an $\EE$-triangle with $I^B\in\mathcal{I}$, then for any $a\in\B(A,B)$,
\begin{equation}\label{Comm_extn}
\xymatrix{
\EE(\Omega X,A) \ar[r]^{\varphi_{X,A}}_{\cong} \ar[d]_{a_{\ast}}  &\EE(X,\Sigma A)  \ar[d]^{a_{1\ast}}\\
\EE(\Omega X,B) \ar[r]^{\cong}_{\varphi_{X,B}}  &\EE(X,\Sigma B)
}
\end{equation}
is commutative. Here, $a_1\colon\Sigma A\to\Sigma B$ is any morphism satisfying $a_1^{\ast}\iota^B=a_{\ast}\iota^A$. Remark that such $a_1$ exists by the injectivity of $I^B$ $($dually to {\rm (2)} in Proposition \ref{3.7}$)$.
\end{lem}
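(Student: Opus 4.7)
The plan is to build $\varphi_{X,A}$ as the composition of two surjective connecting maps (one in each direction) from Fact~\ref{FactExact}, after checking that their kernels inside $\B(\Omega X,\Sigma A)$ coincide. Concretely, applying $\B(\Omega X,-)$ to the $\EE$-triangle $A\to I^A\xrightarrow{i}\Sigma A\dashrightarrow$ and using $\EE(\Omega X,I^A)=0$ (because $I^A\in\mathcal{I}$) yields a surjection
\[
(\iota^A)^\sharp\colon \B(\Omega X,\Sigma A)\twoheadrightarrow \EE(\Omega X,A),\qquad g\mapsto g_\ast\iota^A,
\]
whose kernel is the image of $i\circ-\colon\B(\Omega X,I^A)\to\B(\Omega X,\Sigma A)$. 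Dually, applying $\B(-,\Sigma A)$ to $\Omega X\xrightarrow{p}P\to X\dashrightarrow$ and using $\EE(P,\Sigma A)=0$ (since $P\in\mathcal{P}$) gives a surjection
\[
\rho_\sharp\colon\B(\Omega X,\Sigma A)\twoheadrightarrow\EE(X,\Sigma A),\qquad g\mapsto g^\ast\rho,
\]
with kernel the image of $-\circ p\colon \B(P,\Sigma A)\to\B(\Omega X,\Sigma A)$.

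The key step is to show these two kernels agree. For one inclusion, any $g=ih$ with $h\in\B(\Omega X,I^A)$ can be lifted along $p$ because $\EE(X,I^A)=0$ (again $I^A$ injective), giving $h=h'p$ for some $h'\in\B(P,I^A)$, hence $g=(ih')p$. Symmetrically, any $g=h'p$ with $h'\in\B(P,\Sigma A)$ factors through $i$, using $\EE(P,A)=0$ and the long exact sequence $\B(P,I^A)\to\B(P,\Sigma A)\to\EE(P,A)=0$. With both kernels identified, the induced map
\[
\varphi_{X,A}\colon \EE(\Omega X,A)\xleftarrow{\;\cong\;}\B(\Omega X,\Sigma A)/K\xrightarrow{\;\cong\;}\EE(X,\Sigma A)
\]
is the desired isomorphism, where $K$ is the common kernel.

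For the compatibility square \eqref{Comm_extn}, pick any $a_1\in\B(\Sigma A,\Sigma B)$ with $a_1^\ast\iota^B=a_\ast\iota^A$ (which exists by injectivity of $I^B$ applied to the morphism $A\xrightarrow{a}B\to I^B$, exactly dual to Proposition~\ref{3.7}(2)). Post-composition with $a_1$ gives a map $(a_1)_\ast\colon\B(\Omega X,\Sigma A)\to\B(\Omega X,\Sigma B)$. For any $g\in\B(\Omega X,\Sigma A)$ we compute on one side
\[
a_\ast\bigl((\iota^A)^\sharp g\bigr)=a_\ast(g_\ast\iota^A)=g_\ast(a_\ast\iota^A)=g_\ast(a_1^\ast\iota^B)=(a_1 g)_\ast\iota^B=(\iota^B)^\sharp\bigl((a_1)_\ast g\bigr),
\]
and on the other side
\[
(a_1)_\ast(\rho_\sharp g)=(a_1)_\ast(g^\ast\rho)=(a_1 g)^\ast\rho=\rho_\sharp\bigl((a_1)_\ast g\bigr),
\]
so $(a_1)_\ast$ is compatible with both surjections and hence induces the claimed commutativity. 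The only delicate point is the well-definedness of $a_1$ up to the kernel; but any two such choices differ by a morphism factoring through $i$, so the effect on $\B(\Omega X,\Sigma A)/K$ is the same. No single step is truly hard — the main thing to get right is checking that the two kernels in $\B(\Omega X,\Sigma A)$ really coincide, since this is where the full force of enough projectives and injectives enters.
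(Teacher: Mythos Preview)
Your argument is correct and essentially matches the paper's: both construct $\varphi_{X,A}$ by comparing the two surjections $\B(\Omega X,\Sigma A)\twoheadrightarrow\EE(\Omega X,A)$ and $\B(\Omega X,\Sigma A)\twoheadrightarrow\EE(X,\Sigma A)$ and checking (in your case by explicit kernel equality, in the paper's via factorization and uniqueness) that their kernels coincide, and the naturality computation is the same. One minor notational slip to fix: in the paper's conventions you should write $g^{\ast}\iota^A$ and $g_{\ast}\rho$ (you have these swapped), and the corresponding maps are denoted $\iota^A_{\sharp}$ and $\rho^{\sharp}$ respectively.
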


\begin{proof}
We have the following exact sequences. Especially, $\iota^A_{\sharp}$ and $\rho^{\sharp}$ are surjective.
$$\xymatrix{
\B(\Omega X,I^A) \ar[r]^{i\circ-}&\B(\Omega X,\Sigma A) \ar[r]^{\iota^A_{\sharp}} \ar@{=}[d] &\EE(\Omega X,A) \ar[r]&0&\text{exact}\\
\B(P,\Sigma A) \ar[r]_{-\circ p}&\B(\Omega X,\Sigma A)\ar[r]_{\rho^{\sharp}} &\EE(X,\Sigma A) \ar[r]&0&\text{exact}
}
$$
Since $I^A\in\mathcal{I}$, we have $\rho^{\sharp}(i\circ m)=i_{\ast}m_{\ast}\rho=0$ for any $m\in\B(\Omega X,I^A)$.
Thus there is a unique homomorphism $\varphi_{X,A}\colon\EE(\Omega X,A)\to\EE(X,\Sigma A)$ which satisfies $\rho^{\sharp}=\varphi_{X,A}\circ\iota^A_{\sharp}$. Similarly, we obtain a homomorphism $\psi_{X,A}\colon\EE(X,\Sigma A)\to\EE(\Omega X,A)$ satisfying $\psi_{X,A}\circ\rho^{\sharp}=\iota^A_{\sharp}$. By the uniqueness, this gives the inverse of $\varphi_{X,A}$.

Since we have
\begin{eqnarray*}
\varphi_{X,B}\circ a_{\ast}\circ\iota^A_{\sharp}(f)&=&\varphi_{X,B}(f^{\ast}a_{\ast}\iota^A)\ =\ \varphi_{X,B}(f^{\ast}a_1^{\ast}\iota^B)\\
&=&\varphi_{X,B}\circ\iota^B_{\sharp}(a_1 f)\ =\ \rho^{\sharp}(a_1 f)\\
&=&a_{1\ast}\circ\rho^{\sharp}(f)=a_{1\ast}\circ \varphi_{X,A}\circ\iota^A_{\sharp}(f)\end{eqnarray*}
for any $f\in\B(\Omega X,\Sigma A)$, commutativity of $(\ref{Comm_extn})$ follows from the surjectivity of $\iota^A_{\sharp}$.
\end{proof}

\begin{prop}\label{longexact}
Let $\xymatrix{A \ar@{ >->}[r]^a &B\ar@{->>}[r]^b &C}$ be a conflation. For any object $X\in\B$, we have the following long exact sequence.
\[ \cdots\to\EE(\Omega^i X,A)\overset{a_{\ast}}{\to}
\EE(\Omega^iX,B)\overset{b_{\ast}}{\to}
\EE(\Omega^iX,C)\to
\EE(\Omega^{i+1}X,A)\overset{a_{\ast}}{\to}
\EE(\Omega^{i+1}X,B)\overset{b_{\ast}}{\to}
%\EE(\Omega^{i+1}X,C)\to
\cdots\quad(i>0) \]
\end{prop}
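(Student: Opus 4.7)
The plan is to iteratively splice the 6-term exact sequences of Fact \ref{FactExact}. For each $i\ge 0$, Fact \ref{FactExact} applied to the conflation $A\overset{x}{\ra}B\overset{y}{\ta}C$ at source $\Omega^iX$ already gives the exact segment
\[\B(\Omega^iX,A)\to\B(\Omega^iX,B)\to\B(\Omega^iX,C)\to\EE(\Omega^iX,A)\overset{x_*}{\to}\EE(\Omega^iX,B)\overset{y_*}{\to}\EE(\Omega^iX,C),\]
so what is needed is a connecting homomorphism $\partial_i\colon\EE(\Omega^iX,C)\to\EE(\Omega^{i+1}X,A)$ that glues the $i$-th segment to the $(i{+}1)$-th.

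For the construction, I apply Fact \ref{FactExact} to the defining $\EE$-triangle $\Omega^{i+1}X\overset{p_{i+1}}{\to}P_i\to\Omega^iX\overset{\delta_{\Omega^iX}}{\dashrightarrow}$ with target $C$; combined with $\EE(P_i,C)=0$ this produces a surjection $\delta_{\Omega^iX\sharp}\colon\B(\Omega^{i+1}X,C)\twoheadrightarrow\EE(\Omega^iX,C)$. Given $\xi\in\EE(\Omega^iX,C)$, pick a lift $f\in\B(\Omega^{i+1}X,C)$ and set $\partial_i(\xi):=f^*\delta$, where $\delta\in\EE(C,A)$ realizes the given conflation. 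Any other lift $f'$ differs from $f$ by a morphism factoring through $P_i$, and since $\EE(P_i,A)=0$ this difference contributes nothing to $f^*\delta$, so $\partial_i$ is well defined.

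Exactness at $\EE(\Omega^iX,C)$ reduces to two elementary calculations. If $\xi=y_*\eta$ with $\eta\in\EE(\Omega^iX,B)$, lift $\eta$ to $g\in\B(\Omega^{i+1}X,B)$ (again using $\EE(P_i,B)=0$) and take $f=yg$; then $\partial_i(\xi)=(yg)^*\delta=g^*(y^*\delta)=0$, because $y^*\delta=y^*\delta^\sharp(\id_A)=0$ by Fact \ref{FactExact} applied to the conflation $A\to B\to C$ itself. Conversely, if $f^*\delta=0$, Fact \ref{FactExact} applied to $A\to B\to C$ at source $\Omega^{i+1}X$ gives $f=yg$ for some $g$, whence $\xi=y_*(g^*\delta_{\Omega^iX})$. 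Exactness at $\EE(\Omega^{i+1}X,A)$ is analogous: $x_*\partial_i(\xi)=f^*(x_*\delta)=0$ because $x_*\delta=\delta^\sharp(x)=0$; conversely $x_*\alpha=0$ forces $\alpha=f^*\delta$ for some $f\in\B(\Omega^{i+1}X,C)$, and then $\alpha=\partial_i(f^*\delta_{\Omega^iX})$. Splicing across all $i$ yields the claimed long exact sequence.

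I do not anticipate a genuine obstacle beyond careful bookkeeping of which application of Fact \ref{FactExact} is being invoked at each step. An alternative route would use Lemma \ref{extn} to replace $\EE(\Omega^iX,-)$ by $\EE(X,\Sigma^i(-))$ and then construct a horseshoe-style conflation $\Sigma^iA\rightarrowtail\Sigma^iB\twoheadrightarrow\Sigma^iC$ via Fact \ref{FactNP}; this shortens the exactness steps but demands extra compatibility checks on iterated cosyzygies, so the direct splicing above seems the cleanest route.
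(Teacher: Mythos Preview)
Your argument is correct (modulo some sub/superscript slips: the map $\B(\Omega^{i+1}X,C)\to\EE(\Omega^iX,C)$ is $(\delta_{\Omega^iX})^{\sharp}$, not $(\delta_{\Omega^iX})_{\sharp}$, and expressions like $f^*\delta_{\Omega^iX}$ and $g^*\delta_{\Omega^iX}$ should read $f_*\delta_{\Omega^iX}$ and $g_*\delta_{\Omega^iX}$; also the conflation maps are $a,b$ in the statement, not $x,y$). The logic is sound once these are fixed.

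Your route is genuinely different from the paper's. The paper works on the \emph{covariant} side: it takes injective coresolutions $A\to I^A\to\Sigma A$ and $B\to I^B\to\Sigma B$, uses the dual of Proposition~\ref{PBPO} to manufacture a new conflation $B\rightarrowtail C\oplus I^A\twoheadrightarrow\Sigma A$, reads off exactness of $\EE(X,B)\to\EE(X,C)\to\EE(X,\Sigma A)\to\EE(X,\Sigma B)$ from Fact~\ref{FactExact} applied to this new conflation, and then transports back via the isomorphisms $\varphi_{X,-}$ of Lemma~\ref{extn}. Your approach stays entirely on the projective side: you build the connecting map $\partial_i$ by hand from the syzygy triangle and verify the two splice-point exactnesses directly. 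What you gain is that your proof needs only enough projectives, whereas the paper's proof invokes both enough projectives and enough injectives (through $\Sigma$ and Lemma~\ref{extn}); since the proposition as stated only mentions $\Omega$, yours is the more economical hypothesis. What the paper gains is that the exactness checks become one-liners once the auxiliary conflation is in hand, and the compatibility with Lemma~\ref{extn} is made explicit, which is used downstream. Amusingly, the ``alternative route'' you sketch at the end is essentially the paper's actual proof.
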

\begin{proof}
As in Lemma \ref{extn}, take $\EE$-triangles
\[ A\overset{j}{\longrightarrow}I^A\overset{i}{\longrightarrow}\Sigma A\overset{\iota^A}{\dashrightarrow},\quad B\to I^B\to\Sigma B\overset{\iota^B}{\dashrightarrow}\quad(I^A,I^B\in\mathcal I) \]
and a morphism $a_1\in\B(\Sigma A,\Sigma B)$ satisfying $a_1^{\ast}\iota^B=a_{\ast}\iota^A$. Since $I^A$ is injective, any
$\EE$-triangle $A\overset{a}{\longrightarrow}B\overset{b}{\longrightarrow}C\dashrightarrow$ admits a morphism of $\EE$-triangles as follows.
$$\xymatrix{
A \ar[r]^a \ar@{=}[d] &B \ar[r]^b \ar[d]^{k} &C \ar[d]^c\ar@{-->}[r]&\\
A \ar[r]_{j} &I^A \ar[r]_{i} &\Sigma A\ar@{-->}[r]&
}
$$
We may modify $k$ to induce an $\EE$-triangle
\[ B\overset{\svecv{b}{k}}{\longrightarrow}C\oplus I^A\overset{\svech{-c}{i}}{\longrightarrow}\Sigma A\overset{a_{\ast}\iota^A}{\dashrightarrow} \]
by the dual of Proposition \ref{PBPO}.
Then $\EE(X,B)\overset{b_{\ast}}{\longrightarrow}\EE(X,C)\overset{c_{\ast}}{\longrightarrow}\EE(X,\Sigma A)$ becomes exact. Since the equality $a_{\ast}\iota^A=a_1^{\ast}\iota^B$ induces a morphism of $\EE$-triangles as follows,
$$\xymatrix{
B \ar[r]^{\svecv{b}{k}} \ar@{=}[d] &C\oplus I^A \ar[r]^{\svech{-c}{i}} \ar[d] &\Sigma A \ar[d]^{a_1}\ar@{-->}[r]^{a_{\ast}\iota^A}&\\
B \ar[r] &I^B \ar[r] &\Sigma B\ar@{-->}[r]_{\iota^B}&
}
$$
the same argument shows that $\EE(X,C)\overset{-c_{\ast}}{\longrightarrow}\EE(X,\Sigma A)\overset{{a_1}_{\ast}}{\longrightarrow}\EE(X,\Sigma B)$ is exact. Hence we have the following commutative diagram of exact sequences.
$$\xymatrix
{
&&&\EE(\Omega X,A) \ar[r]^{{a}_{\ast}} \ar[d]^{\cong}_{\varphi_{X,A}} &\EE(\Omega X,B) \ar[r]^{{b}_{\ast}} \ar[d]^{\varphi_{X,B}}_{\cong} &\EE(\Omega X,C) \\
\EE(X,A)\ar[r]^{a_{\ast}} &\EE(X,B)\ar[r]^{b_{\ast}} &\EE(X,C) \ar[r]^{-{c}_{\ast}} &\EE(X,\Sigma A) \ar[r]^{{a_1}_{\ast}} &\EE(X,\Sigma B)
}
$$
Replacing $X$ by $\Omega^{i}X$ recursively, we get the general case.
\end{proof}

\subsection{Cotorsion pairs induced from $n$-cluster tilting subcategories}

For convenience, we denote $\EE(X,\Sigma^{i}Y)\cong\EE(\Omega^{i}X,Y)$ by $\EE^{i+1}(X,Y)$ for $i\ge0$ in the rest.
\begin{defn}
A full subcategory $\M\subseteq\B$ is called {\it $n$-cluster tilting}, if it satisfies the following conditions.
\begin{enumerate}
\item $\mathcal{M }$ is contravariantly finite and covariantly finite in $\B$,
\item $X\in \mathcal{M}$ if and only if $\EE^i(X,\M)=0$ for any $i\in \{1,2,\ldots,{n-1}\}$,
\item $X\in \mathcal{M}$ if and only if $\EE^i(\M,X)=0$ for any $i\in \{1,2,\ldots,{n-1}\}$.
\end{enumerate}
In particular, $\M\subseteq\B$ becomes an additive subcategory closed by isomorphisms and direct summands, which contains all the projectives and all the injectives.
\end{defn}
In the remaining of this article, let $\M$ be an $n$-cluster tilting subcategory in $\B$.

\begin{defn}
For any $\ell\ge0$, we define a full subcategory $\M_{\ell}\subseteq\B$ inductively as follows.
\begin{itemize}
\item $\M_1=\M$.
\item $\M_{\ell}=\Cone(\M_{\ell-1},\M)$ for $\ell>1$.
\end{itemize}
\end{defn}

\begin{defn}
For any $m>0$, denote by $\M^{\bot_m}$ the subcategory of objects $X\in\B$ satisfying
\[ \EE^i(\M, X)=0\quad (1\le i \le m). \]
We have $\M^{\bot_{n-1}}=\M$.
\end{defn}

\begin{lem}\label{4.2}
We have the following.
\[ \M_{\ell}=\begin{cases}
\M^{\bot_{n-\ell}}& 0\le \ell<n, \\
\B& \ell\geq n.
\end{cases} \]
In particular, $\M_{\ell}\subseteq\B$ is closed under direct summands. Moreover, we have $\M_k\subseteq\M_{\ell}$ for any $0\le k\le\ell$.
\end{lem}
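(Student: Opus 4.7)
The plan is to prove the equality $\M_\ell=\M^{\bot_{n-\ell}}$ for $1\le\ell<n$ by induction on $\ell$, then derive $\M_\ell=\B$ for $\ell\ge n$ by a variant of the same argument, and finally observe that closure of $\M_\ell$ under direct summands is automatic from the vanishing criterion. Monotonicity $\M_{\ell-1}\subseteq\M_\ell$ I would settle first, by induction on $\ell$: for $\ell=2$ the trivial conflation $0\ra M\ta M$ (with $0\in\M$) witnesses $\M=\M_1\subseteq\M_2$; for $\ell\ge 3$, the defining conflation $K\ra M\ta X$ of $X\in\M_{\ell-1}$ already has $K\in\M_{\ell-2}\subseteq\M_{\ell-1}$ by the inductive hypothesis, so the same conflation witnesses $X\in\M_\ell$. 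Closure under summands is then immediate, since $\M^{\bot_m}$ is cut out by vanishing of additive functors and $\B$ trivially so.

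For the inclusion $\M_\ell\subseteq\M^{\bot_{n-\ell}}$ the base $\ell=1$ is the defining property of $n$-cluster tilting. For the step, take $X\in\M_\ell$ with defining conflation $K\ra M\ta X$, where $K\in\M_{\ell-1}$ and $M\in\M$, and for each $N\in\M$ apply the long exact sequence obtained by combining Fact~\ref{FactExact} and Proposition~\ref{longexact}, read with $N$ in the first argument. The inductive hypothesis gives $\EE^i(N,K)=0$ for $1\le i\le n-\ell+1$, and the $n$-cluster tilting property gives $\EE^i(N,M)=0$ for $1\le i\le n-1$, so the three-term exact piece $\EE^i(N,M)\to\EE^i(N,X)\to\EE^{i+1}(N,K)$ forces $\EE^i(N,X)=0$ in the range $1\le i\le n-\ell$.

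For the converse $\M^{\bot_{n-\ell}}\subseteq\M_\ell$ I would construct a defining conflation of $X$ as follows. Pick a right $\M$-approximation $f_0\colon N_0\to X$ (using contravariant finiteness of $\M$) and a deflation $p\colon P\ta X$ with $P\in\mathcal P\subseteq\M$ (using enough projectives); then the dual of Proposition~\ref{PBPO}, applied to the $\EE$-triangle $\Ker p\to P\to X\dashrightarrow$ and the morphism $f_0$, produces an $\EE$-triangle $K\ra N_0\oplus P\ta X$ whose deflation is precisely $[f_0,p]$. Since $[f_0,p]$ remains a right $\M$-approximation, the long exact sequence for any $N\in\M$ yields $\EE(N,K)=0$ from surjectivity of $\Hom(N,N_0\oplus P)\to\Hom(N,X)$, while the dimension-shift isomorphisms $\EE^{i+1}(N,K)\cong\EE^i(N,X)$ (valid for $1\le i\le n-2$ because $\EE^i(N,N_0\oplus P)=0$ for $1\le i\le n-1$) combined with $X\in\M^{\bot_{n-\ell}}$ give $\EE^j(N,K)=0$ for $1\le j\le n-\ell+1$. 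Hence $K\in\M^{\bot_{n-\ell+1}}=\M_{\ell-1}$ by the inductive hypothesis, so $X\in\M_\ell$. Running exactly the same construction for an arbitrary $X\in\B$ (with no vanishing hypothesis) still yields $K$ with $\EE(N,K)=0$, so $K\in\M^{\bot_1}=\M_{n-1}$ and $X\in\M_n$; monotonicity then gives $\M_\ell=\B$ for all $\ell\ge n$.

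The main technical obstacle is arranging for the right $\M$-approximation to be a deflation, since an arbitrary right approximation need not be one. The trick of combining it with a projective deflation via $[f_0,p]\colon N_0\oplus P\to X$ resolves this, provided one recognises the resulting map as the deflation of the pullback $\EE$-triangle produced by the dual of Proposition~\ref{PBPO}. Once this ingredient is in place, the rest of the proof is routine bookkeeping with the long exact sequence and the syzygy identifications of Lemma~\ref{extn}.
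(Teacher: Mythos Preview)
Your proof is correct and is precisely the argument the paper's one-line proof (``This follows from Proposition~\ref{longexact}'') leaves to the reader: both directions of the equality are obtained by running the long exact sequence of Proposition~\ref{longexact} along a suitable conflation, and you have correctly supplied the one nontrivial detail the paper suppresses, namely upgrading a right $\M$-approximation to a deflation by adjoining a projective cover via the dual of Proposition~\ref{PBPO}. There is nothing to correct.
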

\begin{proof}
This follows from Proposition \ref{longexact}.
\end{proof}

\begin{defn}
For any $1\le j\le i$, put $\mathcal{Y}_{i,j}=\Sigma^{j-1}\M_{i-j}$. In particular, for any $1\le \ell\le n$, we put $\mathcal{Y}_{\ell}=\mathcal{Y}_{n,\ell}=\Sigma^{\ell-1}\M_{n-\ell}$.
\end{defn}

\begin{rem}\label{5.9}
The following holds.
\begin{enumerate}
\item For any $i\ge 1$, we have $\mathcal{Y}_{i,1}=\M_{i-1}$.
\item For any $2\le j\le i$, we have $\mathcal{Y}_{i,j}=\Sigma\mathcal{Y}_{i-1,j-1}$.
\end{enumerate}
\end{rem}

In the rest of this article, we show that the pair $(\M_{\ell},\mathcal{Y}_{\ell})$ becomes a cotorsion pair for each $1\le \ell\le n-1$.
Since we have $\B=\M_n=\Cone(\M_{n-1},\M)$, Lemma \ref{4.2} and the dual of Proposition \ref{approxi} shows that $(\M_1,\mathcal Y_1)=(\M,\M^{\bot_1})$ is a cotorsion pair.

\medskip

First, let us show that $\Y_{\ell}\subseteq\B$ is closed under direct summands.
\begin{lem}\label{LemToShowYClosed}
Let $\mathcal X\subseteq\B$ be a full additive subcategory closed by isomorphisms, containing $\mathcal I$. If $\mathcal X\subseteq\B$ is closed under direct summands and if $\EE(\mathcal I,\mathcal X)=0$ holds, then $\Sigma X\subseteq\B$ is also closed under direct summands.
\end{lem}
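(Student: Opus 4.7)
The goal is to show that if $Z_1\oplus Z_2\in\Sigma\mathcal X$, then each summand lies in $\Sigma\mathcal X$. Fix a conflation $X\rightarrowtail I\twoheadrightarrow Z_1\oplus Z_2$ witnessing this, realizing some $\delta\in\EE(Z_1\oplus Z_2,X)$ with $X\in\mathcal X$ and $I\in\mathcal I$. The plan is to produce, for each $j$, a conflation of the form $X_j\rightarrowtail I\twoheadrightarrow Z_j$ with the \emph{same} injective middle term, and then use $\EE(\mathcal I,\mathcal X)=0$ to check $X_j\in\mathcal X$.

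Under the canonical isomorphism $\EE(Z_1\oplus Z_2,X)\cong\EE(Z_1,X)\oplus\EE(Z_2,X)$, write $\delta=(\delta_1,\delta_2)$ with $\delta_j=\iota_j^{\ast}\delta$, and realize each $\delta_j$ by a conflation $X\rightarrowtail J_j\twoheadrightarrow Z_j$. Applying {\rm (ET4)}$^{\mathrm{op}}$ to the composition of deflations $I\twoheadrightarrow Z_1\oplus Z_2\twoheadrightarrow Z_1$, where the second is the canonical split projection with kernel $Z_2$, produces an object $K$ fitting into conflations $K\rightarrowtail I\twoheadrightarrow Z_1$ and $X\rightarrowtail K\twoheadrightarrow Z_2$, the latter realizing $\iota_2^{\ast}\delta=\delta_2$. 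Uniqueness of realizations (Remark \ref{Remdelautom}) then identifies $K\cong J_2$, so one obtains the desired $J_2\rightarrowtail I\twoheadrightarrow Z_1$; symmetrically $J_1\rightarrowtail I\twoheadrightarrow Z_2$.

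To finish, one must show $J_1,J_2\in\mathcal X$. Form the direct-sum $\EE$-triangle $X\oplus X\rightarrowtail J_1\oplus J_2\twoheadrightarrow Z_1\oplus Z_2$ realizing $\delta_1\oplus\delta_2$. A direct check using the additivity of $\EE$ shows $(\nabla_X)_{\ast}(\delta_1\oplus\delta_2)=\delta$, where $\nabla_X=[1\ 1]\colon X\oplus X\to X$ is the codiagonal, so Proposition \ref{PBPO} applied to $\nabla_X$ produces a conflation
\[ X\oplus X\rightarrowtail X\oplus J_1\oplus J_2\twoheadrightarrow I \]
whose realizing extension lies in $\EE(I,X\oplus X)\subseteq\EE(\mathcal I,\mathcal X)=0$. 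The conflation therefore splits, giving $X\oplus J_1\oplus J_2\cong X\oplus X\oplus I$ in $\B$. The right-hand side lies in $\mathcal X$ (since $X\in\mathcal X$, $I\in\mathcal I\subseteq\mathcal X$, and $\mathcal X$ is additive), so closure of $\mathcal X$ under direct summands forces $J_1,J_2\in\mathcal X$, completing the argument.

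The main obstacle is the identification $K\cong J_2$ in the second paragraph, which depends on correctly matching the extension data produced by {\rm (ET4)}$^{\mathrm{op}}$ with $\delta_2$; an alternative route, avoiding {\rm (ET4)}$^{\mathrm{op}}$ entirely, is to describe $K$ directly as the pullback of the cospan $I\to Z_1\oplus Z_2\leftarrow Z_2$ via the dual of Proposition \ref{PBPO}, so that both $K$ and $J_2$ arise as realizations of $\delta_2$ and must therefore be isomorphic.
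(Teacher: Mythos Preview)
Your proof is correct and takes a genuinely different, cleaner route than the paper for the key step of showing the new first term lies in $\mathcal X$.

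Both the paper and you begin the same way: apply {\rm (ET4)$^{\mathrm{op}}$} to the composite deflation $I\twoheadrightarrow Z_1\oplus Z_2\twoheadrightarrow Z_1$ to obtain an object (your $K\cong J_2$, the paper's $X_1$) sitting in conflations $X\rightarrowtail J_2\twoheadrightarrow Z_2$ and $J_2\rightarrowtail I\twoheadrightarrow Z_1$. The divergence is in proving $J_2\in\mathcal X$. The paper realizes the \emph{other} pullback $\svecv{1}{0}^{\ast}\delta$ by $X\rightarrowtail Y\twoheadrightarrow B_1$, invokes Fact~\ref{FactNP} to produce a conflation $X_1\rightarrowtail X\oplus I\twoheadrightarrow Y$, and then runs a somewhat delicate diagram chase (composing two morphisms of $\EE$-triangles to force $(1+gf)_{\ast}\nu=0$) to exhibit an explicit retraction of its inflation. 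Your argument instead assembles both summands at once: the direct-sum triangle $X\oplus X\rightarrowtail J_1\oplus J_2\twoheadrightarrow Z_1\oplus Z_2$ pushes forward along the codiagonal $\nabla_X$ to the original triangle, so Proposition~\ref{PBPO} yields a conflation $X\oplus X\rightarrowtail X\oplus J_1\oplus J_2\twoheadrightarrow I$ whose extension class lies in $\EE(I,X\oplus X)=0$ and therefore splits. This is shorter, symmetric in the two summands, and uses the hypothesis $\EE(\mathcal I,\mathcal X)=0$ in a single transparent step rather than twice (once to normalize the middle row in Fact~\ref{FactNP}, once implicitly in the splitting). The paper's approach, by contrast, pins down the specific conflation $X_1\rightarrowtail X\oplus I\twoheadrightarrow Y$ and its explicit retraction, which is more information but not needed for the lemma.
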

\begin{proof}
Let $X\overset{x}{\longrightarrow}I\overset{y}{\longrightarrow}B_1\oplus B_2\overset{\delta}{\dashrightarrow}$ be any $\EE$-triangle with $I\in\mathcal I$. It suffices to show that $X\in\mathcal X$ implies $B_1\in\Sigma\mathcal X$.
By {\rm (ET4)$^{\mathrm{op}}$}, we have a commutative diagram made of $\EE$-triangles
\begin{equation}\label{Diag_Ded_f}
\xymatrix{
X \ar@{=}[d] \ar[r]^{{}^{\exists}g} &{}^{\exists}X_1 \ar[d]^{{}^{\exists}m} \ar[r] &B_2 \ar[d]^{\svecv{0}{1}} \ar@{-->}[r]^{\svecv{0}{1}^{\ast}\delta} &\\
X \ar[r]_{x}  &I \ar[r]_{y} \ar[d]_{z=\svech{1}{0}y} &B_1\oplus B_2 \ar[d]^{\svech{1}{0}}\ar@{-->}[r]^{\delta} &\\
&B_1 \ar@{-->}[d]^{{}^{\exists}\nu} \ar@{=}[r] &B_1 \ar@{-->}[d]^{0}\\
&&
}
\end{equation}
satisfying $\svech{1}{0}^{\ast}\nu=g_{\ast}\delta$. We will show $X_1\in\mathcal X$. Realize $\svecv{1}{0}^{\ast}\delta\in\EE(B_1,X)$ as $X\overset{x^{\prime}}{\longrightarrow}Y\overset{y^{\prime}}{\longrightarrow}B_1\overset{\svecv{1}{0}^{\ast}\delta}{\dashrightarrow}$.
By Fact \ref{FactNP}, we have the following commutative diagram made of $\EE$-triangles
\[
\xymatrix{
&X_1\ar[d]^{\svecv{{}^{\exists}f}{m}} \ar@{=}[r]&X_1\ar[d]^m&\\
X \ar[r]_{\svecv{1}{0}} \ar@{=}[d] &X\oplus I \ar[r]_{\svech{0}{1}} \ar[d]^{{}^{\exists}e} &I \ar[d]^{z} \ar@{-->}[r]^{0}&\\
X \ar[r]_{x^{\prime}} &Y \ar@{-->}[d]^{y^{\prime\ast}\nu} \ar[r]_{y^{\prime}} &B_1\ar@{-->}[d]^{\nu}\ar@{-->}[r]_{\svecv{1}{0}^{\ast}\delta}&\\
&&&
}
\]
satisfying
\begin{equation}\label{Eq_Ded_f}
\svecv{1}{0}_{\ast}\svecv{1}{0}^{\ast}\delta+\svecv{f}{m}_{\ast}\nu=0,
\end{equation}
in which, we may assume that the middle row is of the form $X\overset{\svecv{1}{0}}{\longrightarrow}X\oplus I\overset{\svech{0}{1}}{\longrightarrow}I\overset{0}{\dashrightarrow}$ since we have $\EE(I,X)=0$ by assumption.

Since $(\ref{Eq_Ded_f})$ implies $\svecv{1}{0}^{\ast}\delta=-f_{\ast}\nu$, we obtain a morphism of $\EE$-triangles as follows.
$$\xymatrix{
X_1 \ar[r]^m \ar[d]_{-f} &I \ar[r]^{z} \ar[d]^{{}^{\exists}i} &B_1 \ar[d]^{\svecv{1}{0}}\ar@{-->}[r]^{\nu}&\\
X \ar[r]_{x} &I \ar[r]_y &B_1\oplus B_2\ar@{-->}[r]_{\delta}&
}
$$
Composing this with a morphism of $\EE$-triangles appearing in $(\ref{Diag_Ded_f})$, we obtain the following.
$$\xymatrix{
X_1 \ar[r]^m \ar[d]_{-gf} &I \ar[r]^{z} \ar[d]^{i} &B_1 \ar@{=}[d]\ar@{-->}[r]^{\nu}&\\
X_1 \ar[r]_m &I \ar[r]_{z} &B_1 \ar@{-->}[r]_{\nu}&
}
$$
Then $(1+gf)_{\ast}\nu=0$ follows, and thus there is $j\in\B(I,X_1)$ which gives $1+gf=jm$. This means that in the conflation
\begin{equation}\label{ConfToSplit}
X_1\overset{\svecv{f}{m}}{\ra}X\oplus I\overset{e}{\ta}Y,
\end{equation}
the inflation $\svecv{f}{m}$ has a retraction $\svech{g}{j}$, and thus $(\ref{ConfToSplit})$ splits. In particular, we have $X_1\in\mathcal X$.
\end{proof}

\begin{prop}\label{PropYClosed}
For any $1\le j\le i\le n$, subcategory $\mathcal Y_{i,j}\subseteq\B$ is closed under direct summands. In particular, $\mathcal Y_{\ell}\subseteq\B$ is closed under direct summands, for any $1\le l\le n$.
\end{prop}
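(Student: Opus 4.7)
The plan is to proceed by induction on $j$, applying Lemma \ref{LemToShowYClosed} at each step via the identity $\Y_{i,j}=\Sigma\Y_{i-1,j-1}$ of Remark \ref{5.9}(2). The base case $j=1$ is immediate from Lemma \ref{4.2}, since $\Y_{i,1}=\M_{i-1}$.

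For the inductive step with $j\ge 2$, I would set $\mathcal X:=\Y_{i-1,j-1}=\Sigma^{j-2}\M_{i-j}$ and verify the three hypotheses of Lemma \ref{LemToShowYClosed}. Closure of $\mathcal X$ under direct summands is the inductive hypothesis (valid because $j-1\le i-1\le n$). The inclusion $\mathcal I\subseteq\mathcal X$ follows from the split conflation $0\rightarrowtail I\twoheadrightarrow I$ for each $I\in\mathcal I$: since $0\in\M_{i-j}$, this shows $\mathcal I\subseteq\Sigma\M_{i-j}$, and iterating gives $\mathcal I\subseteq\Sigma^{j-2}\M_{i-j}$. Additivity and closure under isomorphism of $\mathcal X$ follow from the additivity of the realization $\mathfrak s$.

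The main technical step is verifying $\EE(\mathcal I,\mathcal X)=0$. I would unfold the definition of $\Sigma^{j-2}\M_{i-j}$ into a chain of cosyzygy conflations $X_{k-1}\rightarrowtail I_{k-1}\twoheadrightarrow X_k$ for $k=1,\ldots,j-2$, with $X_0\in\M_{i-j}$, $X_{j-2}=X$, and $I_{k-1}\in\mathcal I$. For any $I'\in\mathcal I$, applying Proposition \ref{longexact} to each such conflation, together with the vanishing $\EE(\Omega^\ell I',I_{k-1})=0$ (by the injectivity of $I_{k-1}$), produces successive injections $\EE(\Omega^\ell I',X_k)\hookrightarrow\EE(\Omega^{\ell+1}I',X_{k-1})$. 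Chaining these gives
\[ \EE(I',X)\hookrightarrow \EE(\Omega^{j-2}I',X_0)=\EE^{j-1}(I',X_0). \]
Since $\mathcal I\subseteq\M$ (part of the $n$-cluster tilting hypothesis) and Lemma \ref{4.2} identifies $\M_{i-j}$ with $\M^{\bot_{n-i+j}}$, and since $j-1\le n-i+j$ whenever $i\le n$, the target extension group vanishes.

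The principal obstacle is this last computation: converting the iterated-cosyzygy description of $\mathcal X$ into a higher-$\EE$ vanishing that can be read off from the $n$-cluster tilting hypothesis, which requires careful index-tracking across repeated long exact sequences. Once this is in place, Lemma \ref{LemToShowYClosed} delivers closure of $\Sigma\mathcal X=\Y_{i,j}$ under direct summands, closing the induction and hence the particular case $\Y_\ell=\Y_{n,\ell}$ asserted in the last sentence.
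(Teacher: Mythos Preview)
Your proof is correct and follows essentially the same approach as the paper: induction on $j$, base case via $\mathcal Y_{i,1}=\M_{i-1}$ and Lemma~\ref{4.2}, inductive step by verifying the hypotheses of Lemma~\ref{LemToShowYClosed} for $\mathcal X=\mathcal Y_{i-1,j-1}$ and invoking $\mathcal Y_{i,j}=\Sigma\mathcal Y_{i-1,j-1}$. The only difference is cosmetic: where you unfold the cosyzygy chain and use Proposition~\ref{longexact} to obtain the injection $\EE(I',X)\hookrightarrow\EE^{j-1}(I',X_0)$, the paper simply invokes the notational convention $\EE(X,\Sigma^kY)=\EE^{k+1}(X,Y)$ to write $\EE(\M,\mathcal Y_{i-1,j-1})\cong\EE^{j}(\M,\M_{i-j})$ (in fact $\EE^{j-1}$, as your computation shows; either index lies in the vanishing range of Lemma~\ref{4.2}) and concludes directly.
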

\begin{proof}
We show by an induction on $j$. For $j=1$, this follows from Remarks \ref{4.2} and \ref{5.9}.

For $j>1$, suppose that we have shown for $j-1$. Let $i$ be any integer satisfying $j\le i\le n$, and let us show that $\mathcal Y_{i,j}\subseteq\B$ is closed under direct summands.
By the assumption of the induction, $\mathcal Y_{i-1,j-1}\subseteq\B$ is closed under direct summands. By definition, we have $\mathcal I\subseteq\mathcal Y_{i-1,j-1}$. Since $\EE(\mathcal M,\mathcal Y_{i-1,j-1})\cong\EE^j(\mathcal M,\mathcal M_{i-j})=0$ by Lemma \ref{4.2}, we also have $\EE(\mathcal I,\mathcal Y_{i-1,j-1})=0$. Thus Lemma \ref{LemToShowYClosed} shows that $\mathcal Y_{i,j}=\Sigma\mathcal Y_{i-1,j-1}\subseteq\B$ is closed under direct summands.
\end{proof}

\begin{lem}\label{4.1}
For any $i,j\ge0$, we have
\begin{equation}\label{Evanish}
\EE^k(\M_i,\M_j)=0\quad(i\le k\le n-j).
\end{equation}
In particular, $\EE(\M_{\ell},\mathcal Y_{\ell})=0$ holds for any $1\le \ell\le n$.
\end{lem}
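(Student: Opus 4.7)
The plan is a double induction: first fix $i = 1$ and induct on $j$, then use that as the base case to induct on $i$ for every $j$. I restrict attention to $i, j \ge 1$, since the boundary cases are vacuous or immediate under whichever convention is in force for $\M_0$. The base case $j = 1$ of the first induction is the defining property of the $n$-cluster tilting subcategory $\M$, giving $\EE^k(\M, \M) = 0$ for $1 \le k \le n - 1$.

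For the inductive step of Stage 1, choose $Y \in \M_j = \Cone(\M_{j-1}, \M)$ with defining conflation $M' \ra Y \ta M$, $M' \in \M_{j-1}$, $M \in \M$. Combining Fact \ref{FactExact} (the $6$-term piece at $k = 1$) with Proposition \ref{longexact} (which extends it via syzygies) yields, for each $X \in \B$, a long exact sequence
\[
\cdots \to \EE^k(X, M') \to \EE^k(X, Y) \to \EE^k(X, M) \to \EE^{k+1}(X, M') \to \cdots.
\]
Specializing to $X \in \M$ and $1 \le k \le n - j$, the inductive hypothesis at $j - 1$ forces $\EE^k(X, M') = \EE^{k+1}(X, M') = 0$ (since both indices lie in $[1, n - (j - 1)]$), while $\EE^k(X, M) = 0$ since $k \le n - 1$. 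Hence $\EE^k(X, Y) = 0$, completing Stage 1.

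For Stage 2, I induct on $i$, assuming $\EE^k(\M_{i-1}, \M_j) = 0$ for every $j$ and every $k$ with $i - 1 \le k \le n - j$. Fix $Z \in \M_i$ with a conflation $N \ra Z \ta M$ where $N \in \M_{i-1}$ and $M \in \M$, and any $Y \in \M_j$. The contravariant $6$-term exact sequence of Fact \ref{FactExact}, evaluated at the object $\Sigma^{k-1} Y$, combined with the isomorphism $\EE^k(\,\cdot\,, Y) \cong \EE(\,\cdot\,, \Sigma^{k-1} Y)$ from Lemma \ref{extn}, produces an exact fragment
\[
\EE^k(M, Y) \to \EE^k(Z, Y) \to \EE^k(N, Y).
\]
For $i \le k \le n - j$, the left term vanishes by Stage 1 (since $1 \le k \le n - j$) and the right term vanishes by the outer inductive hypothesis (since $i - 1 \le k \le n - j$), whence $\EE^k(Z, Y) = 0$.

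The main technical obstacle is the bookkeeping: each long exact sequence introduces an index shift $k \mapsto k + 1$, and one must check that this keeps the terms one wishes to kill within the range where the relevant inductive hypothesis is already available; no heavier machinery is required beyond Fact \ref{FactExact}, Lemma \ref{extn} and Proposition \ref{longexact}. Once the main vanishing is established, the ``in particular'' claim is immediate: taking $i = \ell$, $j = n - \ell$, $k = \ell$ satisfies $\ell \le \ell \le n - (n - \ell)$, and Lemma \ref{extn} identifies $\EE(\M_\ell, \mathcal Y_\ell) \cong \EE^\ell(\M_\ell, \M_{n - \ell}) = 0$.
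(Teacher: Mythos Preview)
Your overall strategy matches the paper's: induct on $i$ using long exact sequences, with the case $i=1$ as the base. (The paper gets the base case in one line from Lemma~\ref{4.2}, since $\M_j=\M^{\bot_{n-j}}$; your Stage~1 induction on $j$ is correct but unnecessary.)

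There is, however, a genuine slip in both stages: you have the defining conflation for $\M_j=\Cone(\M_{j-1},\M)$ written backwards. By the paper's convention (Definition~\ref{C5}), an object $Y\in\Cone(\M_{j-1},\M)$ sits in a conflation $M'\rightarrowtail M\twoheadrightarrow Y$ with $M'\in\M_{j-1}$, $M\in\M$, not $M'\rightarrowtail Y\twoheadrightarrow M$. The same reversal appears in Stage~2. This is not cosmetic: with the correct conflation the exact fragment you need changes shape. In Stage~1 the covariant long exact sequence gives $\EE^k(X,M)\to\EE^k(X,Y)\to\EE^{k+1}(X,M')$, so you must kill $\EE^k(X,M)$ and $\EE^{k+1}(X,M')$ (the bookkeeping still works). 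In Stage~2, for $N\rightarrowtail M\twoheadrightarrow Z$ the contravariant sequence reads $\EE^{k-1}(N,Y)\to\EE^k(Z,Y)\to\EE^k(M,Y)$, so the induction hypothesis is invoked at degree $k-1$, not $k$; this is exactly what the paper does. With these corrections your argument is the paper's proof.
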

\begin{proof}
Let us show by an induction on $i$. For $i=0$, this is trivial. For $i=1$, this follows from Lemma \ref{4.2}.

For $i>1$, suppose $(\ref{Evanish})$ holds for $i-1$, for any $j\ge 0$. Let $X\in\M_i$ be any object, and take a conflation
\[ M_{i-1}\ra M\ta X \]
with $M\in\M, M_{i-1}\in\M_{i-1}$. Then, since $\EE^{k-1}(M_{i-1},B)\to\EE^{k}(X,B)\to\EE^{k}(M,B)$ is exact for any $B\in\B$, the equality $\EE^k(M,\M_j)=0 \ (1\le k\le n-j)$ and the assumption of the induction
\[ \EE^{k-1}(M_{i-1},\M_j)=0\quad(i-1\le k-1\le n-j) \]
shows $\EE^k(X,\M_j)=0$ for any $i\le k\le n-j$.
\end{proof}

\begin{lem}\label{con}
For any $1\le j\le i$, we have $\M_i\subseteq\Cone(\mathcal{Y}_{i,j},\M_j)$.
Especially for $i=n$,
\[ \B=\M_{n}=\Cone(\mathcal{Y}_{\ell},\M_{\ell}) \]
holds for any $1\le l\le n-1$.
\end{lem}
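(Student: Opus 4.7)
The plan is to induct on $j$. For the base case $j=1$, the identity $\mathcal{Y}_{i,1}=\M_{i-1}$ from Remark \ref{5.9}(1) reduces the inclusion to $\M_i\subseteq\Cone(\M_{i-1},\M)$, which is just the definition of $\M_i$.

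For the inductive step with $j\ge 2$, fix $X\in\M_i$ and first choose a conflation $X_1\ra M\ta X$ with $X_1\in\M_{i-1}$ and $M\in\M$ arising from the definition $\M_i=\Cone(\M_{i-1},\M)$. Applying the inductive hypothesis to $X_1$ with parameter $j-1$ gives a conflation $Y'\ra N\ta X_1$ where $Y'\in\mathcal{Y}_{i-1,j-1}=\Sigma^{j-2}\M_{i-j}$ and $N\in\M_{j-1}$. To ``shift'' this by one cosyzygy I would take $N\ra I_N\ta\Sigma N$ with $I_N\in\mathcal{I}$ and then apply {\rm (ET4)} to the pair $Y'\to N\to X_1$ and $N\to I_N\to\Sigma N$, which share $N$ in the middle-of-first/left-of-second positions required by {\rm (ET4)}. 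This produces an object $E$ together with two new conflations $Y'\ra I_N\ta E$ and $X_1\ra E\ta\Sigma N$; the first identifies $E$ with a cosyzygy of $Y'$, so $E\in\Sigma\mathcal{Y}_{i-1,j-1}=\mathcal{Y}_{i,j}$ by Remark \ref{5.9}(2).

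Next, applying Fact \ref{FactNP}(2) to the two conflations $X_1\ra M\ta X$ and $X_1\ra E\ta\Sigma N$ (both starting at $X_1$) yields an object $L$ fitting into conflations $E\ra L\ta X$ and $M\ra L\ta\Sigma N$. The substantive verification is $L\in\M_j$, which is where Lemma \ref{4.2} enters essentially. For $j\ge n$ there is nothing to check; for $j<n$ it suffices to prove $\EE^k(\M,L)=0$ for $1\le k\le n-j$, and this will follow by feeding the conflation $M\ra L\ta\Sigma N$ into the long exact sequence of Proposition \ref{longexact}. The flanking terms vanish in the required range because $M\in\M=\M^{\bot_{n-1}}$ gives $\EE^k(\M,M)=0$ for $k\le n-1$, while $N\in\M_{j-1}=\M^{\bot_{n-j+1}}$ together with $\EE^k(\M,\Sigma N)\cong\EE^{k+1}(\M,N)$ gives $\EE^k(\M,\Sigma N)=0$ for $k\le n-j$. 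Hence $L\in\M^{\bot_{n-j}}=\M_j$, and the conflation $E\ra L\ta X$ exhibits $X\in\Cone(\mathcal{Y}_{i,j},\M_j)$, completing the induction. The ``especially'' clause is then immediate from $\M_n=\B$, the reverse inclusion being trivial.

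The main obstacle I expect is the careful orchestration of the {\rm (ET4)} step: the conflation $X_1\ra E\ta\Sigma N$ produced by {\rm (ET4)} has to come out in exactly the form that pairs with $X_1\ra M\ta X$ on the left so that Fact \ref{FactNP}(2) can deliver a deflation onto $X$, and simultaneously the intermediate object $E$ must happen to be the right cosyzygy to land in $\mathcal{Y}_{i,j}$. The companion difficulty is the extension-closedness check for $L$: without the identification $\M_\ell=\M^{\bot_{n-\ell}}$ and the higher extension long exact sequence, there is no direct reason an extension of $\Sigma\M_{j-1}$ by $\M$ should return to $\M_j$, so the $n$-cluster-tilting hypothesis and the standing assumption of enough projectives and injectives are both genuinely doing work here.
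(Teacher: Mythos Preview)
Your proof is correct. Both you and the paper induct on $j$ with the same base case, decompose $X\in\M_i$ via a conflation $X_1\ra M\ta X$ with $X_1\in\M_{i-1}$, and apply the inductive hypothesis to $X_1$ to obtain $Y'\ra N\ta X_1$ with $Y'\in\mathcal{Y}_{i-1,j-1}$ and $N\in\M_{j-1}$. The difference lies in which object is pushed through a cosyzygy and in what order the octahedral-type lemmas are applied.

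The paper takes the cosyzygy of $Y'$ rather than of $N$: it resolves $Y'\ra I\ta\Sigma Y'$, applies Fact~\ref{FactNP} to the two conflations out of $Y'$ to produce $M_{j-1}\cong N\ra I\oplus X_1\ta\Sigma Y'$ (the middle row $I\ra\bullet\ta X_1$ splits since $I$ is injective), then adds $I$ to the original conflation to get $I\oplus X_1\ra I\oplus M\ta X$, and finally applies {\rm (ET4)} to obtain $\Sigma Y'\ra N'\ta X$ together with $M_{j-1}\ra I\oplus M\ta N'$. The payoff is that this last conflation shows $N'\in\Cone(\M_{j-1},\M)=\M_j$ \emph{directly from the definition}, with no appeal to Lemma~\ref{4.2} or the long exact sequence. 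Your route, taking the cosyzygy of $N$ instead, forces you to verify $L\in\M_j$ via the characterization $\M_j=\M^{\bot_{n-j}}$ and Proposition~\ref{longexact}; this works, but it imports more machinery than the paper needs for this step. In short: same architecture, but the paper's choice of which object to suspend makes the membership check for the middle term immediate.
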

\begin{proof}
We show by an induction on $j$. If $j=1$, then for any $i\ge 1$, we have $\M_i=\Cone(\M_{i-1},\M)=\Cone(\mathcal{Y}_{i,1},\M_1)$ by definition.

For $j>1$, suppose that we have shown for $j-1$. Let $i$ be any integer satisfying $j\le i$, and let us show $\M_i\subseteq\Cone(\mathcal{Y}_{i,j},\M_j)$.
Let $M_i\in\M_i$ be any object. By definition, there is a conflation
\begin{equation}\label{ConfMMM}
M_{i-1}\ra M\ta M_i\quad(M\in\M,M_{i-1}\in\M_{i-1})
\end{equation}
By the assumption of the induction, we have $\M_{i-1}\subseteq\Cone(\mathcal{Y}_{i-1,j-1},\M_{j-1})$. Thus there exists a conflation
\[ Y\ra M_{j-1}\ta M_{i-1}\quad(Y\in\mathcal{Y}_{i-1,j-1},M_{j-1}\in\M_{j-1}). \]
If we resolve $Y$ by a conflation $Y\ra I\ta Y^{\prime}$ with $I\in\mathcal{I}$, then we have $Y^{\prime}\in\Sigma\mathcal{Y}_{i-1,j-1}=\mathcal{Y}_{i,j}$.
By Fact \ref{FactNP}, we have the following diagram made of conflations.
\[
\xymatrix{
Y \ar@{ >->}[r] \ar@{ >->}[d] &M_{j-1} \ar@{->>}[r] \ar@{ >->}[d]^{} &M_{i-1} \ar@{=}[d]\\
I \ar@{->>}[d] \ar@{ >->}[r] &{}^{\exists}N \ar@{->>}[d] \ar@{->>}[r] &M_{i-1}\\
Y^{\prime} \ar@{=}[r] &Y^{\prime}&
}
\]
Since the middle row splits, we may assume $N=I\oplus M_{i-1}$. From $(\ref{ConfMMM})$, we have a conflation
\[ I\oplus M_{i-1}\ra I\oplus M\ta M_i. \] Then by {\rm (ET4)}, we obtain the following commutative diagram made of conflations.
\[
\xymatrix{
M_{j-1} \ar@{=}[d] \ar@{ >->}[r] &I\oplus M_{i-1} \ar@{ >->}[d] \ar@{->>}[r] &Y^{\prime} \ar@{ >->}[d] \\
M_{j-1} \ar@{ >->}[r]  &I\oplus M \ar@{->>}[r] \ar@{->>}[d]^{} &{}^{\exists}N^{\prime} \ar@{->>}[d]^{}\\
&M_i\ar@{=}[r] &M_i
}
\]
Since $I\oplus M\in\M$, it follows $N^{\prime}\in\Cone(\M_{j-1},\M)=\M_j$, and thus $M_i\in\Cone(\mathcal{Y}_{i,j},\M_j)$.
\end{proof}

\begin{lem}\label{coreMY}
$\M_{\ell}\cap \mathcal Y_{\ell}=\Sigma^{\ell-1}\M$ holds for any $1\le \ell\le n-1$.
\end{lem}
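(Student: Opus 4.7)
The plan is to prove both inclusions directly, using Lemma \ref{4.2} (which identifies $\M_{\ell}$ with $\M^{\bot_{n-\ell}}$) together with a dimension-shift isomorphism derived from Proposition \ref{longexact}, and then invoke condition (3) in the definition of an $n$-cluster tilting subcategory at the crucial step.

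First, I would establish the preliminary dimension-shift lemma: for any $M \in \M$, any $Y \in \B$, and any $k \geq 1$, one has
\[ \EE^{k}(M,\Sigma Y) \cong \EE^{k+1}(M,Y). \]
This follows from Proposition \ref{longexact} applied to the cosyzygy conflation $Y \ra I^{Y} \ta \Sigma Y$, together with the vanishing $\EE^{k}(-,I^{Y}) = 0$ for $k \geq 1$ (since $I^{Y}$ is injective and its iterated cosyzygies remain injective as split summands of injectives). Iterating gives
\[ \EE^{k}(M,\Sigma^{\ell-1}Y) \cong \EE^{k+\ell-1}(M,Y) \]
for all $k \geq 1$ and $\ell \geq 1$.

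For the inclusion $\Sigma^{\ell-1}\M \subseteq \M_{\ell}\cap\mathcal Y_{\ell}$: since $\M = \M_{1} \subseteq \M_{n-\ell}$ by Lemma \ref{4.2} (as $n-\ell \geq 1$), we get $\Sigma^{\ell-1}\M \subseteq \Sigma^{\ell-1}\M_{n-\ell} = \mathcal Y_{\ell}$. For any $X = \Sigma^{\ell-1}M'$ with $M' \in \M$ and any $M \in \M$, the dimension-shift gives $\EE^{k}(M,X) \cong \EE^{k+\ell-1}(M,M')$; for $1 \le k \le n-\ell$ the shifted index $k+\ell-1$ lies in $[\ell, n-1]$, so this vanishes by the $n$-cluster tilting property. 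Hence $X \in \M^{\bot_{n-\ell}} = \M_{\ell}$ by Lemma \ref{4.2}.

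For the reverse inclusion $\M_{\ell}\cap\mathcal Y_{\ell} \subseteq \Sigma^{\ell-1}\M$, let $X \in \M_{\ell}\cap\mathcal Y_{\ell}$. By definition of $\mathcal Y_{\ell}$, there is $M' \in \M_{n-\ell}$ such that $X$ is obtained from $M'$ by $\ell-1$ iterated cosyzygy conflations. The same dimension-shift yields
\[ \EE^{k}(M,X) \cong \EE^{k+\ell-1}(M,M') \quad (M \in \M,\, k \geq 1). \]
The hypothesis $X \in \M_{\ell} = \M^{\bot_{n-\ell}}$ forces $\EE^{j}(M,M') = 0$ for $\ell \leq j \leq n-1$, while $M' \in \M_{n-\ell} = \M^{\bot_{\ell}}$ forces $\EE^{j}(M,M') = 0$ for $1 \leq j \leq \ell$. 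Together $\EE^{j}(\M,M') = 0$ for all $1 \leq j \leq n-1$, so condition (3) in the definition of $n$-cluster tilting gives $M' \in \M$, and therefore $X \in \Sigma^{\ell-1}\M$.

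The only step requiring genuine care is the dimension-shift isomorphism across iterated cosyzygies; once this is in hand, both inclusions become routine bookkeeping with the ranges $[1,n-\ell]$, $[1,\ell]$, $[\ell,n-1]$ of $\EE^{\ast}$-vanishings, which fit together precisely to fill $[1,n-1]$. I do not anticipate any further obstacle.
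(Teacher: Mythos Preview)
Your proof is correct, but it follows a genuinely different route from the paper's. The paper argues the nontrivial inclusion $\M_{\ell}\cap\mathcal Y_{\ell}\subseteq\Sigma^{\ell-1}\M$ by induction on $\ell$: given $N\in\M_{\ell}\cap\mathcal Y_{\ell}$, it takes a single conflation $Z\ra I\ta N$ with $Z\in\Sigma^{\ell-2}\M_{n-\ell}\subseteq\mathcal Y_{\ell-1}$, uses one step of dimension-shift to verify $Z\in\M_{\ell-1}$, and then invokes the inductive hypothesis $\M_{\ell-1}\cap\mathcal Y_{\ell-1}=\Sigma^{\ell-2}\M$. By contrast, you descend all the way down the cosyzygy tower in one go, obtaining $\EE^{k}(\M,X)\cong\EE^{k+\ell-1}(\M,M')$, and then combine the two vanishing ranges to conclude $M'\in\M$ directly from condition~(3) of the cluster-tilting definition. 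Your approach is more transparent about why the ranges $[1,\ell]$ and $[\ell,n-1]$ fit together, and it avoids induction at the cost of isolating the iterated dimension-shift as a standalone lemma; the paper's version keeps each step elementary but hides the global picture inside the induction.
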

\begin{proof}
$\M_{\ell}\cap\mathcal Y_{\ell}\supseteq\Sigma^{\ell-1}\M$ follows immediately from the definition. Let us show $\M_{\ell}\cap\mathcal Y_{\ell}\subseteq\Sigma^{\ell-1}\M$ by an induction on $\ell$. For $\ell=1$, this follows from $\M\cap\mathcal Y_1=\M\cap\M_{n-1}=\M$.

For $\ell>1$, suppose that we have shown for $\ell-1$. Let $N\in\M_{\ell}\cap\mathcal Y_{\ell}$ be any object. Since $N\in\mathcal Y_{\ell}=\Sigma^{\ell-1}\M_{n-\ell}$, there is a conflation
\begin{equation}\label{ConfZIN}
Z\ra I\ta N
\end{equation}
with $Z\in\Sigma^{\ell-2}\M_{n-\ell}\subseteq\mathcal Y_{\ell-1}$ and $I\in\mathcal I$.

By Lemma \ref{4.2}, we have $\EE(\M,Z)=0$. Moreover, since $(\ref{ConfZIN})$ gives isomorphisms
\[ \EE^k(B,N)\cong\EE^{k+1}(B,Z)\quad({}^{\forall}B\in\B) \]
for any $k\ge 1$, we also obtain $\EE^k(\M,Z)=0$ for $2\le k\le n-\ell+1$ again by Lemma \ref{4.2}. Thus it follows $Z\in\M^{\bot_{n-\ell+1}}=\M_{\ell-1}$, and thus $Z\in\M_{\ell-1}\cap\mathcal Y_{\ell-1}=\Sigma^{\ell-2}\M$ by the assumption of the induction. Then $(\ref{ConfZIN})$ shows $N\in\Sigma^{\ell-1}\M$.
\end{proof}

\begin{thm}\label{nCT}
Let $\M\subseteq \B$ be any $n$-cluster tilting subcategory. Then it induces a sequence of cotorsion pairs $(\M_1,\mathcal Y_1)\ge(\M_2,\mathcal Y_2)\ge\cdots\ge(\M_{n-1},\mathcal Y_{n-1})$.
Each cotorsion pair $(\M_{\ell},\mathcal Y_{\ell})$ has the core $\W_{\ell}=\Sigma^{\ell-1}\M$, the coheart $\C_{\ell}=\M$ and the kernel $\K_{\ell}=\M_{n-1}$.
\end{thm}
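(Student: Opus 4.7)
The theorem decomposes into four claims: each $(\M_\ell,\mathcal Y_\ell)$ is a complete cotorsion pair, the ordering $\mathcal Y_\ell\supseteq\mathcal Y_{\ell+1}$ holds, the core equals $\Sigma^{\ell-1}\M$, and the coheart and kernel equal $\M$ and $\M_{n-1}$ respectively. My strategy is to (i) get the cotorsion pair structure from the dual of Proposition~\ref{approxi}, using Lemmas~\ref{4.1} and~\ref{con} together with closure under direct summands and extension-closedness of $\mathcal Y_\ell$; (ii) read off the ordering from a shift of $\EE^*$-vanishings; (iii) compute the coheart by splitting a defining conflation; and (iv) compute the kernel by verifying hypothesis~(4) of Theorem~\ref{HCoH}, which promotes $(\M,\K_\ell)$ to a cotorsion pair, and then invoking uniqueness of the orthogonal component.

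For (i) I have closure under direct summands from Lemma~\ref{4.2} and Proposition~\ref{PropYClosed}, the vanishing $\EE(\M_\ell,\mathcal Y_\ell)=0$ from Lemma~\ref{4.1}, and $\B=\Cone(\mathcal Y_\ell,\M_\ell)$ from Lemma~\ref{con}. The dual of Proposition~\ref{approxi} then requires in addition $\mathcal I\subseteq\mathcal Y_\ell$ (immediate since $0\in\M_{n-\ell}$ and the trivial conflation $0\ra I\ta I$ witnesses $I\in\Sigma^{k}\M_{n-\ell}$ inductively) and extension-closedness of $\mathcal Y_\ell$. I would prove the latter by induction on $\ell$: the base $\mathcal Y_1=\M^{\bot_1}$ is extension-closed by Proposition~\ref{longexact}. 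The step reduces to showing $\Sigma\mathcal X$ is extension-closed whenever $\mathcal X$ is: given $A\ra Y\ta B$ with $A=\Sigma A_0$, $B=\Sigma B_0$ realized by $A_0\ra I^A\ta A$ and $B_0\ra I^B\ta B$, a horseshoe construction (assembled from (ET4) and Fact~\ref{FactNP}, with the injectivity of $I^A,I^B$ forcing the row $I^A\ra I^A\oplus I^B\ta I^B$ to split) produces a $3\times 3$ diagram with middle row $Y_0\ra I^A\oplus I^B\ta Y$ and top row $A_0\ra Y_0\ta B_0$; the inductive hypothesis gives $Y_0\in\mathcal X$, so $Y\in\Sigma\mathcal X$.

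The ordering $\mathcal Y_\ell\supseteq\mathcal Y_{\ell+1}$ reduces to $\Sigma\M_{n-\ell-1}\subseteq\M_{n-\ell}$; since $\M_k=\M^{\bot_{n-k}}$ (Lemma~\ref{4.2}) and $\EE^i(\M,\Sigma M)\cong\EE^{i+1}(\M,M)$ (Lemma~\ref{extn}), vanishing of $\EE^i$ in the range $1\le i\le\ell+1$ for $M\in\M^{\bot_{\ell+1}}$ shifts to vanishing in $1\le i\le\ell$ for $\Sigma M$, putting the latter in $\M^{\bot_\ell}=\M_{n-\ell}$. The core identity is Lemma~\ref{coreMY}. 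For the coheart $\C_\ell=\M_\ell\cap{}^{\bot_1}\M_\ell$, the inclusion $\M\subseteq\C_\ell$ is immediate from $\M\subseteq\M_\ell$ and $\EE(\M,\M_\ell)=0$ (Lemma~\ref{4.1} with $i=1,j=\ell$). Conversely, for $X\in\C_\ell$, the defining conflation $M_{\ell-1}\ra N\ta X$ with $N\in\M$ and $M_{\ell-1}\in\M_{\ell-1}\subseteq\M_\ell$ splits since $\EE(X,M_{\ell-1})=0$, exhibiting $X$ as a direct summand of $N\in\M$, so $X\in\M$.

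With $\C_\ell=\M$ in hand, for the kernel I verify condition~(4) of Theorem~\ref{HCoH}: $\M_\ell\subseteq\Cone(\K_\ell,\M)$. For $M\in\M_\ell$, the defining conflation $M_{\ell-1}\ra N\ta M$ satisfies $N\in\M$ and $M_{\ell-1}\in\M_{\ell-1}\subseteq\M_\ell\subseteq\K_\ell$ (since $\U\subseteq\K$ for any cotorsion pair), so $M\in\Cone(\K_\ell,\M)$. Theorem~\ref{HCoH} then promotes $(\M,\K_\ell)=(\C_\ell,\K_\ell)$ to a cotorsion pair on $\B$. By the standard argument that the second component of a complete cotorsion pair is determined by the first (for $X\in\M^{\bot_1}$, the completion conflation $X\ra V\ta U$ with $V\in\K_\ell,U\in\M$ splits, so $X$ is a summand of $V\in\K_\ell$), we conclude $\K_\ell=\M^{\bot_1}=\M_{n-1}$ by Lemma~\ref{4.2}. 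The main technical obstacle is the horseshoe construction used to prove extension-closedness of $\mathcal Y_\ell$; every other step is a direct application of the preceding results.
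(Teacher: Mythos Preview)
Your proof is correct, and in one place more complete than the paper's: you explicitly supply the extension-closedness of $\mathcal Y_\ell$ needed to invoke the dual of Proposition~\ref{approxi}, whereas the paper simply lists the ingredients and applies that proposition without comment. Your horseshoe argument for ``$\mathcal X$ extension-closed $\Rightarrow \Sigma\mathcal X$ extension-closed'' is the right idea; just be careful that the induction is on the number of suspensions (proving $\Sigma^k\M_{n-\ell}$ is extension-closed for all $k$, base case $k=0$ via Lemma~\ref{4.2} and the long exact sequence), not literally on $\ell$, since $\mathcal Y_\ell$ and $\mathcal Y_{\ell-1}$ are suspensions of \emph{different} $\M_j$'s.

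The genuine divergence is in the order of the coheart/kernel computation. The paper first shows $\K_\ell=\M_{n-1}$ directly: one inclusion comes from $\EE(\M,\M_\ell)=0$ and $\EE(\M,\mathcal Y_\ell)\cong\EE^\ell(\M,\M_{n-\ell})=0$, and the reverse inclusion is obtained by taking $M_{n-1}\in\M_{n-1}$, using Lemma~\ref{con} to write $Y\ra M_{\ell-1}\ta M_{n-1}$ with $Y\in\mathcal Y_{n-1,\ell-1}$, and then pushing out along $Y\ra I$ to get a conflation $M_{\ell-1}\ra M_{n-1}\oplus I\ta \Sigma Y$ with $\Sigma Y\in\mathcal Y_\ell$. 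The coheart $\C_\ell={}^{\bot_1}\K_\ell={}^{\bot_1}\M_{n-1}=\M$ then falls out. You instead compute $\C_\ell=\M$ first by a direct splitting argument, and then feed this into condition~(4) of Theorem~\ref{HCoH} to conclude that $(\M,\K_\ell)$ is a cotorsion pair, whence $\K_\ell=\M^{\bot_1}=\M_{n-1}$. Your route is more conceptual and reuses the machinery of Section~\ref{Section_ProjectiveHeart}; the paper's is more hands-on but entirely self-contained within Section~\ref{Section_nCluster}. Both are short and either would be acceptable.
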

\begin{proof}
By Lemma \ref{4.2}, Proposition \ref{PropYClosed}, Lemmas \ref{4.1}, \ref{con} and the dual of Proposition \ref{approxi}, the pair $(\M_{\ell},\mathcal Y_{\ell})$ becomes a cotorsion pair for any $1\le l\le n-1$. Its core is given by Lemma \ref{coreMY}.
Since we already know ${}^{\bot_1}\M_{n-1}=\M$, it remains to show $\add(\M_{\ell}\ast\mathcal Y_{\ell})=\M_{n-1}$. For $\ell=1$, this is obvious.

Let $2\le\ell\le n-1$ be any integer. Then $\M_{n-1}\supseteq\add(\M_{\ell}\ast\mathcal Y_{\ell})$ follows from Lemma \ref{4.2} and
\[ \EE(\M,\M_{\ell})=0,\quad\EE(\M,\mathcal Y_{\ell})\cong\EE^{\ell}(\M,\M_{n-\ell})=0. \]
Conversely, let $M_{n-1}\in\M_{n-1}$ be any object. By Lemma \ref{con}, there is an $\EE$-triangle
\[ Y\to M_{\ell-1}\to M_{n-1}\dashrightarrow \]
with $Y\in\mathcal Y_{n-1,l-1}$, $M_{\ell-1}\in\M_{\ell-1}$. Resolve $Y$ by an $\EE$-triangle $Y\to I\to \Sigma Y\dashrightarrow$ with $I\in\mathcal I$. Then we obtain a morphism of $\EE$-triangles
$$\xymatrix{
Y \ar[r] \ar@{=}[d] &M_{\ell-1} \ar[r] \ar[d] &M_{n-1} \ar[d]\ar@{-->}[r]&\\
Y \ar[r] &I \ar[r] &\Sigma Y \ar@{-->}[r]&
}
$$
since $I$ is injective. By the dual of Proposition \ref{PBPO}, we obtain a conflation $M_{\ell-1}\ra M_{n-1}\oplus I\ta \Sigma Y$, which means $M_{n-1}\in\add(M_{\ell-1}\ast\Sigma\mathcal Y_{n-1,l-1})\subseteq\add(\M_{\ell}\ast\mathcal Y_{\ell})$.

\end{proof}

By Proposition \ref{Prop_HeartEq} and Theorem \ref{HCoH}, we get the following corollary.
\begin{cor}\label{CornCT}
By Proposition \ref{Prop_HeartEq}, all these $(\M_1,\mathcal Y_1),\ldots,(\M_{n-1},\mathcal Y_{n-1})$ are mutually heart-equivalent. Their hearts are equivalent to $\mod (\M/\mathcal P)$ by Theorem \ref{HCoH}.
\end{cor}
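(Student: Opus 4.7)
The plan is short since the statement is effectively a packaging corollary that assembles results already established in the paper, so I would simply verify the hypotheses of the cited propositions.

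First, for the heart-equivalence claim, I would invoke condition (3) of Proposition \ref{Prop_HeartEq}. By Theorem \ref{nCT} every cotorsion pair $(\M_\ell,\mathcal Y_\ell)$ in the sequence has the same kernel $\K_\ell=\M_{n-1}$, so $\K_i=\K_j$ holds for all $1\le i,j\le n-1$. Proposition \ref{Prop_HeartEq} then yields an equivalence between the hearts $\underline\h_\ell$, compatible with the cohomological functors $H_\ell$.

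Next, for the identification with $\mod(\M/\mathcal P)$, I would apply Theorem \ref{HCoH} and Proposition \ref{Propmod}. By Theorem \ref{nCT} the coheart of $(\M_\ell,\mathcal Y_\ell)$ is $\C_\ell=\M$, so I need to check that $(\C_\ell,\K_\ell)=(\M,\M_{n-1})$ is itself a cotorsion pair. But by Lemma \ref{4.2}, $\mathcal Y_1=\Sigma^0\M_{n-1}=\M_{n-1}$, so $(\M,\M_{n-1})$ is literally the cotorsion pair $(\M_1,\mathcal Y_1)$ already produced by Theorem \ref{nCT}. Hence $(\C_\ell,\K_\ell)$ is a cotorsion pair for every $\ell$, which is condition (2) of Theorem \ref{HCoH}. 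Therefore $\underline\h_\ell$ has enough projectives, and Proposition \ref{Propmod} gives the equivalence
\[
\underline\h_\ell\simeq\mod(\C_\ell/\mathcal P)=\mod(\M/\mathcal P).
\]

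There is no substantive obstacle in this proof: all the work has been carried out in Theorem \ref{nCT} (establishing the sequence of cotorsion pairs with common kernel $\M_{n-1}$ and common coheart $\M$) and in the general machinery (Proposition \ref{Prop_HeartEq}, Theorem \ref{HCoH}, Proposition \ref{Propmod}). The only mild subtlety is noticing that $(\C_\ell,\K_\ell)$ happens to coincide with the first member $(\M_1,\mathcal Y_1)$ of the family, which is what guarantees that $(\C_\ell,\K_\ell)$ is a cotorsion pair and thus unlocks Theorem \ref{HCoH}.
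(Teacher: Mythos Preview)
Your proof is correct and follows exactly the route the paper intends: apply Proposition~\ref{Prop_HeartEq} via the common kernel $\K_\ell=\M_{n-1}$ computed in Theorem~\ref{nCT}, then observe that $(\C_\ell,\K_\ell)=(\M,\M_{n-1})=(\M_1,\Y_1)$ is a cotorsion pair so that Theorem~\ref{HCoH} and Proposition~\ref{Propmod} apply. The only quibble is that $\Y_1=\Sigma^0\M_{n-1}=\M_{n-1}$ holds directly from the definition of $\Y_\ell$, so invoking Lemma~\ref{4.2} there is unnecessary.
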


\end{document}